\documentclass[notitlepage,11pt,reqno]{amsart}
\usepackage{color}
\usepackage{amssymb,nicefrac,bm,upgreek,mathtools,verbatim,enumerate}
\usepackage[mathscr]{eucal}
\usepackage{graphicx}
\usepackage{epsf,times}

\definecolor{dmagenta}{rgb}{.4,.1,.5}
\definecolor{007}{rgb}{.0,.0,.7}
\definecolor{dred}{rgb}{.5,.0,.0}
\definecolor{dgreen}{rgb}{.0,.5,.0}
\definecolor{dblue}{rgb}{.0,.0,.5}

\definecolor{violet}{rgb}{.3,.0,.9}
\definecolor{orange}{cmyk}{0,.5,.1,.0}
\definecolor{dcyan}{cmyk}{.5,.0,.0,.0}
\definecolor{dyellow}{cmyk}{.0,.0,.5,.0}
\definecolor{cm}{cmyk}{1,.0,.0,.0}
\textwidth 6.0truein
\oddsidemargin 0.25truein
\evensidemargin 0.25truein
\numberwithin{equation}{section}
\allowdisplaybreaks

\newtheorem{theorem}{Theorem}[section]
\newtheorem{lemma}{Lemma}[section]
\newtheorem{proposition}{Proposition}[section]
\newtheorem{corollary}{Corollary}[section]

\theoremstyle{definition}
\newtheorem{definition}{Definition}[section]
\newtheorem{example}{Example}[section]

\theoremstyle{remark}
\newtheorem{remark}{Remark}[section]

\newcommand{\cA}{\mathcal{A}}

\newcommand{\grad}{\nabla}

\newcommand{\Lyap}{\mathcal{V}}

\newcommand{\R}{\mathbb{R}}
\newcommand{\bS}{\mathbb{S}}

\newcommand{\Rd}{\mathbb{R}^d}

\newcommand{\sB}{\mathscr{B}}
\newcommand{\cC}{\mathcal{C}}

\newcommand{\sL}{\mathscr{L}}

\newcommand{\Ind}{\mathbb{I}}

\newcommand{\bvnorm}[1]{[\kern-0.45ex[\kern0.1ex #1 \kern0.1ex]\kern-0.45ex]}

\newcommand{\abs}[1]{\lvert#1\rvert}
\newcommand{\norm}[1]{\lVert#1\rVert}

\newcommand{\order}{{\mathscr{O}}} 

\newcommand{\df}{:=}
\newcommand{\infdel}{\Delta_\infty}
\newcommand{\ginfdel}{\Delta^\gamma_\infty}

\DeclareMathOperator{\trace}{trace}
\DeclareMathOperator{\dist}{dist}

\begin{document}

\title[Liouville theorems for infinity Laplacian]
{Liouville theorems for infinity Laplacian  with  gradient and KPP type equation}

\author{Anup Biswas}
\address{Indian Institute of Science Education and Research, Dr.\ Homi Bhabha Road, Pashan, Pune 411008}
\email{anup@iiserpune.ac.in}

\author{Hoang-Hung Vo$^{*}$}
\address{Faculty  of  Mathematics  and  Applications,  Saigon  University,  273  An  Duong  Vuong  st.,  Ward  3,Dist.5, Ho Chi Minh City, Viet Nam}
\email{vhhung@sgu.edu.vn}
\thanks{$^*$ Corresponding author}

\date{}

\begin{abstract}
In this paper, we prove  new Liouville type results for a nonlinear equation involving infinity Laplacian with gradient of the form 
$$\ginfdel u + q(x)\cdot \grad{u} |\grad{u}|^{2-\gamma} + f(x, u)\,=\,0\quad \text{in}\; \Rd,$$
where $\gamma\in [0, 2]$ and $\ginfdel$ is a $(3-\gamma)$-homogeneous operator associated with the infinity Laplacian.  Under the assumptions $\liminf_{|x|\to\infty}\lim_{s\to0}f(x,s)/s^{3-\gamma}>0$  and $q$ is a continuous function vanishing at infinity, we construct a positive bounded solution to the equation and if $f(x,s)/s^{3-\gamma}$ decreasing in $s$, 
we further obtain the uniqueness by improving sliding method for infinity Laplacian operator with  nonlinear gradient. Otherwise, if $\limsup_{|x|\to\infty}\sup_{[\delta_1,\delta_2]}f(x,s)<0$, then nonexistence result holds  provided additionally some suitable conditions.  To this aim, we develop  novel techniques to overcome the difficulties stemming from the degeneracy of infinity Laplacian and nonlinearity of the gradient term. Our approach is based on a new regularity result, the strong maximum principle, and Hopf's lemma for infinity Laplacian involving gradient and potential. We also construct  some examples to illustrate our results. We further investigate some deeper qualitative properties of the principal eigenvalue    of the   corresponding nonlinear operator
$$\ginfdel u + q(x)\cdot \grad{u} |\grad{u}|^{2-\gamma}  + c(x)u^{3-\gamma},$$
with Dirichlet  boundary condition in  smooth bounded domains, which may be  of independent interest. The results obtained here
could be considered as sharp extension of the Liouville type results obtained in \cite{ALT,ASS11,BHR, CEG,SP,S1}. 
\end{abstract}

\maketitle

\textit{ \footnotesize Mathematics Subject Classification (2010)}  {\scriptsize 35J60, 35B65, 35J70}.

\textit{ \footnotesize Key words:} {\scriptsize Infinity Laplacian, regularity, Liouville type result, gradient, comparison principle}

\section{Introduction}
Infinity Laplacian was first introduced in the pioneering works of G. Aronsson \cite{AG1,AG2,AG3} in the 1960s and this operator appeared while studying 
\textit{absolute minimizer} in a domain of  $\Rd$. Later, infinity Laplacian also found its application in image processing \cite{CMS}. In the seminal work \cite{RJ93}, R. Jensen employed the theory of viscosity solutions of elliptic equations \cite{CIL} to establish the equivalence of \textit{absolute minimal Lipschitz extensions} (AMLE) and  viscosity solutions of the  infinity Laplace equation and then proved the uniqueness of  AMLE for
the first time. Since then, 
it turned out that the theory of viscosity solution is an appropriate instrument for
the study of infinity Laplacian. Equations involving infinity Laplacian  have thus received a lot of attention in the community and became a subject of intensive research in the theory of partial differential equations. In the elegant survey \cite{ACJ},  Aronsson, Crandall and Juutinen  gave a complete and self-contained
exposition to the theory of  AMLE (see also, \cite{C08}). In the celebrated work,
 by using probabilistic methods, Peres,  Schramm, Sheffield, and Wilson \cite{PPSW}  showed that the infinity Laplacian also appear in the tug-of-war games,  where two players try to move
a token in an open set $\mathcal{O}$ toward a favorable spot on the boundary $\partial \mathcal{O}$ corresponding
to a given payoff function $g$ on $\partial \mathcal{O}$. In the developing progress, we are attracted by the nice works \cite{ALT,AS10,BDM,BM12,ES,PJ07,JL05,JLM,SP,PV12, PV13,LW2008,LW2008a} and those also motivate us to the current study of the Liouville type result as aforementioned.

Throughout the paper, given $\gamma\in [0, 2]$, we define the operator $\sL$ as follows
$$\sL u = \ginfdel u + q(x)\cdot \grad u \abs{\grad u}^{2-\gamma}=  \frac{1}{\abs{\grad u}^\gamma} \sum_{i, j=1}^d \partial_{x_i} u\, \partial_{x_i x_j} u\, \partial_{x_j} u + q(x)\cdot \grad u \abs{\grad u}^{2-\gamma}.$$
Note that  $\ginfdel u$ becomes the classical infinity Laplacian for $\gamma=0$ while it is normalized infinity Laplacian for $\gamma=2$. We also denote $\Delta_\infty^0 u$ by $\Delta_\infty u$ for simplicity. In the present work, we are interested in the study of the Liouville
type result, that is, the existence and nonexistence of positive solutions to the equation 
\begin{equation}\label{E1.1}
\sL u + f(x, u)=0\quad \text{in}\; \Rd\,,
\end{equation} 
with several types of the nonlinearity $f$ including identical zero. It should be noted that this operator is  of neither variational nor  divergence forms (exception of the case $\gamma=2$ in the two dimensional space \cite{DGIMR}).  When $\gamma=2$ and $q$ Lipschitz continuous, the operator
$\sL$ appears in certain tug-of-war games \cite[Theorem~1.3]{LNR13}.
The main goal of this paper
is to extend the researches in \cite{ALT,ASS11,SP,S1} to a more general equation with a nonlinear
 gradient and reaction term, for which the techniques used in the mentioned works cannot be applied in this framework.

As is known, the Liouville type result is one of the central topics in the field of partial differential equations because it is not only important itself in understanding many natural phenomena such as the spreading, vanishing and transition (see Berestycki et al. \cite{BHN,BH}) but also related to the theory of regularity \cite{DDF,DDWW,M1,S1}. It is worth mentioning that the best known regularity results
till date are $\cC^{1,\alpha}$ regularity, with $0<\alpha\ll1$, for infinity harmonic functions in the plane due to Evans and Savin \cite{ES} and everywhere differentiability in dimensions $d\geq3$ due to Evans and Smart \cite{ES1} . Later Lindgren \cite{Lindgren} extended the result of  \cite{ES1} to the inhomogeneous case.
Moreover, some sharp Sobolev regularities of $|\nabla u|^\alpha$ has been recently obtained for homogeneous and inhomogeneous infinity Laplace equation by Koch, Zhang and Zhou
in their interesting works \cite{KZZ19a,KZZ19b}.
 As a direct application of regularity estimate, a  Liouville type result for infinity 
 harmonic functions was obtained by Savin  \cite[Theorem 4]{S1}. More precisely, he proved that any  infinity harmonic function growing at most linearly at $\infty$, that is,
$$|u(x)|\leq C(1+|x|)\quad\quad\textrm{for some positive constant $C$},$$
must be linear. Another Liouville type result for infinity Laplacian equation with strong absorptions has been recently obtained by Ara\'{u}jo, Leit\~{a}o and Teixeira \cite{ALT}. Their result asserts that any non-negative viscosity solution to
$$\Delta_\infty u\,=\,\lambda (u_+)^\beta\quad\quad\textrm{for given $\lambda>0,\beta\in[0,3),$}$$
which satisfies the growth condition $$u(x)=o(|x|^{\frac{4}{3-\beta}}),\quad \text{as}\; |x|\to\infty,$$
is necessarily  constant. More precisely, if
$$\limsup_{|x|\to\infty}\frac{u(x)}{|x|^{\frac{4}{3-\beta}}}<\left(\frac{\lambda(3-\beta)^4}{64(1+\beta)}\right)^{\frac{1}{3-\beta}},$$
then $u\equiv0$. Though the elliptic equation involving infinity Laplacian has been extensively investigated in the recent years, there has been limited work on the equation involving infinity Laplacian and gradient. We mention that some closely
related works to the current problem are done by Armstrong, Smart and Somersille \cite{ASS11},
L\'{o}pez-Soriano, Navarro-Climent and Rossi \cite{LNR13},
 Patrizi \cite{SP}, and Birindelli,  Galise, and  Ishii \cite{BGI}.
 Note that in \cite{BGI}, the authors also proved some existence and nonexistence of viscosity solution  for elliptic equation with truncated  Laplacian and general inhomogeneous term in any strictly convex domain, which may also be called the Liouville type result for degenerate equation. The theory of inhomogeneous infinity Laplacian equations is more recent and delicate. In particular, Lu and Wang \cite{LW2008,LW2008a} have first used Perron's method and the standard viscosity solution techniques to establish both the existence and uniqueness of solution to inhomogeneous infinity Laplace equation of the form 
\begin{equation*}
\Delta_\infty^\gamma u(x)=f(x) \quad\quad x\in\mathcal{O},
\end{equation*}
where $\mathcal{O}\subset\R^d$ is a bounded domain and $\gamma\in\{0,2\}$, with Dirichlet boundary condition, provided $f$ has a constant sign. It is also interesting in the works \cite{LW2008,LW2008a} that the uniqueness may fail when $f$ is allowed to change sign. It is worth mentioning that the evolution equations of homogeneous equations involving infinity Laplacian and  porous medium have been well investigated in the elegant works of Portilheiro and V\'{a}zquez \cite{PV12,PV13}. Especially in the context of porous medium, the authors of \cite{PV12} can transform the original equation to 
$$u_t=(m-1)u\frac{1}{|\grad u|^2}\Delta_\infty u+|\grad u|^2\quad\quad x\in\mathcal{O}\,,$$
where $\mathcal{O}\subset\R^d$ is a bounded domain and  $m>1$ is the order of porous medium. Lastly, we would like to mention the recent interesting work of Li, Nguyen and Wang \cite{LNW}, who successfully used  comparison principle (for viscosity solutions)  to derive estimates, symmetry properties and Liouville result for solutions to the class of equations (both degenerate and non-degenerate elliptic  included fully nonlinear Yamabe problem)  in conformal geometry. Another interesting work
that also considered gradient
term with infinity Laplacian operators is \cite{JPR16}. Symmetry and overdetermined problems for infinity Laplacian operators are considered in the important works \cite{BK,CF,CF1}.

Our first contribution in this article comes from our new Liouville type results and the uniqueness for the non-negative viscosity solution of the equation (\ref{E1.1})  for a general $f$, not depending on $u$.
This should be compared with \cite{LW2008,LW2008a} where uniqueness is established
for $f>0$.  Some of the key tools in our 
analysis  are the new regularity result in Lemma \ref{L2.1} and comparison principle Theorem \ref{T2.1}. 
Such results were first
considered by Crandall, Evans and Gariepy \cite{CEG} and later improved by Armstrong,  Smart,  Somersille \cite{ASS11} for equations involving gradient and by Mitake and Tran \cite{MT} for weakly coupled systems. A  strong maximum principle and Hopf's lemma, Theorem \ref{T2.2}, is proved to support the positivity of solution while its existence, Theorem~\ref{T2.3}, holds without any sign-assumption on  $f$. On the other hand,  as a direct consequence of regularity result   and comparison principle, we establish
three new Liouville type results (Theorems~\ref{T2.4}-\ref{T2.6} below). Recall that the
first Liouville property of infinity Laplacian is obtained by Crandall, Evans and Gariepy \cite{CEG}, 
which shows that any  supersolutions
$u$ of $-\infdel u=0$ in $\Rd$, which are bounded below are necessarily  constant. We extend this result
in Theorem~\ref{T2.4} by proving
that any locally Lipschitz supersolution $u$, which are bounded  below,  to 
$$-\ginfdel u + \abs{\grad u}^{4-\gamma}=\, 0\quad \text{in}\; \Rd,$$
are necessarily constant. Furthermore, in Theorem~\ref{T2.5}, provided $q$ is allowed to change sign but satisfies certain decay property  at infinity, we also find another Liouville type result establishing
that any supersolution, which is bounded  below, to the equation 
$$\ginfdel u + q(x)\cdot \grad u(x) \abs{\grad u}^{2-\gamma}\,=\, 0 \quad \text{in}\; \Rd\,,$$
must be a constant. Our next result,  Theorem~\ref{T2.6},  concerns  Liouville type result for subsolution  of the equation with strong absorption
$$\ginfdel u(x) + q(x)\cdot \grad u(x) \abs{\grad u }^{2-\gamma} + c(x) (u_+(x))^\beta\,= 0
\quad \text{in}\; \Rd,$$
for $c<0$,  provided $u_+$ satisfies a suitable growth condition at infinity. This is a considerable extension of \cite[Theorem~4.4]{ALT}, which considered the case of 
$q=0$, $\gamma=0$ and
$c$ constant (see Example~\ref{E2.1} for further discussion). We remark that, another important regularity result near the boundary of the
non-coincidence set, that is,  $\partial\{u>0\}$ obtained in (cf. \cite[Theorem~4.2]{ALT}) can be 
deduced using the results of Section~\ref{S-max} considered here.

In the next step, we also study a related principal eigenvalue for the operator 
$$\sL u+c(x)u^{3-\gamma},$$
with Dirichlet boundary condition in bounded domains and use it to characterize the validity of maximum principle. This is actually a preliminary step to construct a subsolution for  equation (\ref{E1.1}) in the whole space  to be explained below. However the results can be of independent interest. Some  further insightful discussion is given by  Remark \ref{Re-eigen}.

Since the equation (\ref{E1.1}) imposed on $\Rd$, one of the main difficulties, in studying the existence and nonexistence of positive solution, is how to construct a suitable pair of sub and super-solutions. Therefore, we need to assume $q(x)$ vanishes at infinity and
\begin{equation}\label{Key-assum}
\liminf_{|x|\to\infty}\lim_{s\to0}\frac{f(x,s)}{s^{3-\gamma}}>0=\lim_{|x|\to\infty}|q(x)|.
\end{equation}
In fact, this type of condition is inspired by the series of works of Berestycki et. al. \cite{BHN,BH,BHR} in the investigation of the spreading phenomena of the transition front. In particular, Berestycki, Hamel and Rossi \cite{ BHR} considered the semilinear elliptic equation 
\begin{equation}\label{ori-equa}
\trace(A(x)D^2 u(x)) + q(x)\cdot\grad u(x) + f(x, u)=0\quad \text{in}\; \Rd,
\end{equation}
where  $f$ is of  Fisher-KPP (for Kolmogorov, Petrovsky and Piskunov) type nonlinearity, and established existence and uniqueness of  positive bounded solution under the key assumption
$$\liminf_{|x|\to\infty}\, (4\alpha(x) f_s(x, 0)-|q(x)|^2)\,>\,0\,,$$
where $\alpha(x)$ denotes the smallest eigenvalue of the matrix $A(x)$, provided $\inf_{\R^d}\alpha(x)>0$. This condition plays a central role in the
construction of a suitable subsolution \cite[Lemma~3.1]{BHR} and corresponds to our condition (\ref{Key-assum}) as $q$ vanishes at infinity. Also, note that in the degenerate case, i.e. $\alpha(x)=0$, intuitively, we should impose $q(x)\to0$ as $|x|\to\infty$.
Therefore, one of the main questions for our model is that: which suitable condition should we impose on the coefficients so that we can construct a positive solution for equation (\ref{E1.1})? We successfully solved this problem by assuming that $q$ vanishes at infinity and (\ref{Key-assum}) for 
the  equation (\ref{E1.1}). We strongly believe that this type of condition is optimal to construct the positive solution for degenerate equation such as (\ref{E1.1}). In fact, 
this claim should be compared with the interesting work of Berestycki, Hamel and Nadirashvili \cite[Theorem 1.9]{BHN} in the case $\trace(A(x)D^2 u)=\Delta u$, $q(x)=q$ being a constant and $f=f(u)$ of Fisher-KPP type. More precisely, the authors in \cite{BHN} showed that if $q>2\sqrt{f'(0)}$ then the solution of evolution equation corresponding to equation (\ref{ori-equa}) converges to zero while if $q<2\sqrt{f'(0)}$ then it converges to $1$  in the large time, which is called the vanishing/spreading phenomena. We also emphasize that  condition (\ref{Key-assum}) is sharp for existence of positive solution since  we are able to prove, in the spirit of the vanishing phenomenon as \cite[Theorem 1.9]{BHN}, the nonexistence of positive solution of equation (\ref{E1.1}) by assuming a reverse condition that 
\begin{equation}\label{neg-inf}
\limsup_{\abs{x}\to\infty} \sup_{s\in[\delta_1, \delta_2]}\, f(x, s)< 0=\lim_{|x|\to\infty}|q(x)|,\,\quad\quad\forall\delta_2>\delta_1>0.
\end{equation}
We would like to point out that this conditions are in the spirit of the conditions
used by Nguyen and Vo \cite{NV1}
 to obtain the existence and uniqueness of positive solution for quasilinear elliptic equation in the whole space. However, because of the lack of variational and linear structure of infinity Laplacian and the presence of the nonlinear gradient term most of the techniques used in \cite{BHN,BH,BHR,NV1}  cannot apply in this framework. New ideas must be figured out to deal with the current problems.

%


\textit{The paper is organized as follows : In  Section 2, we establish some preliminary results such as
comparison principle, strong maximum principle and Hopf's lemma that are used to prove the main results. Here, we also prove some direct Liouville type results without assumption at infinity on potential $c(x)$. In Section~\ref{S-eigen}, we study the related Dirichlet principal eigenvalue problem, some basic qualitative properties of the eigenvalue and  use it to characterize the maximum principle. Section~\ref{S-liouv} is devoted to proofs of the existence, nonexistence and uniqueness of positive solution of equation (\ref{E1.1}) and construction of some examples to illustrate the results.}

\section{Regularity, maximum principle, and direct Liouville results }\label{S-max}
In this section, we prove the comparison principle, strong maximum principle and Hopf's lemma which will be used throughout this article. We also develop the Liouville type results Theorems~\ref{T2.4}-\ref{T2.6}
in this section.

Let $\mathcal{O}$ be a  domain in $\Rd$. We denote $\sB_r(x)$ by  the ball of radius $r$ centered at $x$ and for $x=0$ this ball will be denoted by $\sB_r$. We use the notation $u\prec_{z} \varphi$ when $\varphi$ touches $u$
from above exactly at the point $z$ i.e., for some open ball $\sB_r(z)$ around $z$ we have $u(x)<\varphi(x)$ for $x\in\sB_r(z)\setminus\{z\}$ and $u(z)=\varphi(z)$.

To state the results in a general setting we introduce a Hamiltonian. Let $H:\bar{\mathcal{O}}\times\Rd\to \R$ be a continuous function with the following property
\begin{itemize}
\item $H(x, p)\leq C(1+|p|^\beta) \quad \text{for some}\; \beta\in(0, 3-\gamma] \; \text{and}\; (p, x)\in\Rd\times\bar{\mathcal{O}}$.
\item $\abs{H(x, p)-H(y, p)}\leq \omega(\abs{x-y}) (1+ \abs{p}^\beta)$ where 
$\omega:[0, \infty)\to[0, \infty)$ is a continuous function with $\omega(0)=0$.
\end{itemize}
In this article, we deal with the viscosity solution to the equations of the form
\begin{equation}\label{E2.1}
\ginfdel u + H(x,  \grad u) + F(x, u)\,=\, 0\quad \text{in}\;  \mathcal{O}, \quad \text{and}
\quad u=g\quad \text{on}\; \partial\mathcal{O}.
\end{equation}
Here $F$ and $g$ are assumed  to be continuous. For a symmetric matrix $A$ we define
$$M(A)=\max_{\abs{x}=1} \langle x, A x\rangle, \quad m(A)= \min_{\abs{x}=1} \langle x, A x\rangle.$$
\begin{definition}[Viscosity solution]
An upper-semicontinuous (lower-semicontinous) function $u$ on $\bar{\mathcal{O}}$ is said to be a viscosity sub-solution (super-solution) of \eqref{E2.1}  if the followings statements are satisfied:
\begin{itemize}
\item[(i)] $u\leq g$ on $\partial \mathcal{O}$ ($u\geq g$ on $\partial \mathcal{O}$);
\item[(ii)] if $u\prec_{x_0}\varphi$ ($\varphi\prec_{x_0} u$ ) for
some point $x_0\in\mathcal{O}$ and a $\cC^2$ test function $\varphi$, then 
\begin{align*}
& \ginfdel \varphi(x_0) + H(x,  \grad \varphi(x_0)) + F(x_0, u(x_0))\,\geq\, 0\,,
\\
&\left(\ginfdel \varphi(x_0) + H(x,  \grad \varphi(x_0)) + F(x_0, u(x_0))\leq\, 0,\; resp., \right);
\end{align*}
\item[(iii)] for $\gamma=2$, if $u\prec_{x_0}\varphi$ ($\varphi\prec_{x_0} u$) and $\grad\varphi(x_0)=0$ then 
\begin{align*}
& M(D^2\varphi(x_0)) + H(x,  \grad \varphi(x_0)) + F(x_0, u(x_0))\,\geq\, 0\,,
\\
&\left(m(D^2\varphi(x_0)) + H(x,  \grad \varphi(x_0)) + F(x_0, u(x_0))\leq\, 0,\; resp., \right)\,.
\end{align*}
\end{itemize}
We call $u$ a viscosity solution if it is both sub and super solution to \eqref{E2.1}.
\end{definition}
As well known, one can replace the requirement of strict maximum (or minimum) above by non-strict maximum (or minimum).
We would also require the notion of superjet and subjet from \cite{CIL}. A second order \textit{superjet} of $u$ at $x_0\in\mathcal{O}$ is defined as
$$J^{2, +}_\mathcal{O} u(x_0)=\{(\grad\varphi(x_0), D^2\varphi(x_0))\; :\; \varphi\; \text{is}\; \cC^2\; \text{and}\; u-\varphi\; \text{has a maximum at}\; x_0\}.$$
The closure of a superjet is given by
\begin{align*}
\bar{J}^{2, +}_\mathcal{O} u(x_0)&=\Bigl\{ (p, X)\in\Rd\times\bS^{d\times d}\; :\; \exists \; (p_n, X_n)\in J^{2, +}_\mathcal{O} u(x_n)\; \text{such that}
\\
&\,\qquad  (x_n, u(x_n), p_n, X_n) \to (x_0, u(x_0), p, X)\Bigr\}.
\end{align*}
Similarly, we can also define closure of a subjet, denoted by $\bar{J}^{2, -}_\mathcal{O} u$. See for instance, \cite{CIL} for more details.

Our proof of the comparison principle (Theorem~\ref{T2.1}) uses the following regularity result.
\begin{lemma}\label{L2.1}
Suppose that $u$ is a bounded solution of $\ginfdel u \geq - \theta_1 \abs{\grad u}^\beta -\theta_2 \abs{u}-\theta_3$ in $\mathcal{O}$ for some positive constants $\theta_i, i=1,2,3$ and
$\beta\in (0, 3-\gamma]$. Then 
$u$ is locally Lipschitz in $\mathcal{O}$ with (local) Lipschitz constant depending on $\theta_i$ and $\norm{u}_{L^\infty(\mathcal{O})}$.
\end{lemma}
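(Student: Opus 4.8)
\emph{Strategy and reduction.}
The plan is to obtain the Lipschitz bound by a barrier argument in the spirit of Crandall--Evans--Gariepy \cite{CEG} and Armstrong--Smart--Somersille \cite{ASS11}: at every interior point $z$ we trap $u$ from above by a smooth, strictly increasing, concave radial function centred at $z$, and the slope of this barrier at its vertex yields a one--sided Lipschitz estimate; running the same argument with the two points interchanged upgrades it to a two--sided one. Crucially this compares the viscosity subsolution $u$ only against \emph{smooth classical strict supersolutions}, so it does not invoke the comparison principle Theorem~\ref{T2.1} (which is itself deduced from this lemma). For the reduction, set $K\df\norm{u}_{L^\infty(\mathcal{O})}$ and $C_0\df\theta_2K+\theta_3$; since $\abs{u}\le K$, whenever $\varphi\in\cC^2$ touches $u$ from above at a point $x_0$ we have $\ginfdel\varphi(x_0)\ge -\theta_1\abs{\grad\varphi(x_0)}^{\beta}-\theta_2\abs{u(x_0)}-\theta_3\ge -\theta_1\abs{\grad\varphi(x_0)}^{\beta}-C_0$, i.e.\ $u$ is a viscosity subsolution of $\ginfdel u+\theta_1\abs{\grad u}^{\beta}+C_0\ge0$ in $\mathcal{O}$.

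\emph{The barrier.}
Fix $x_0\in\mathcal{O}$ and $R>0$ with $\overline{\sB}_{2R}(x_0)\subset\mathcal{O}$; if $\beta=3-\gamma$ we in addition require $R<c_0(\gamma)/\theta_1$ for a suitable constant $c_0(\gamma)>0$. Let $\Phi\in\cC^\infty([0,R])$ be strictly increasing and concave with $\Phi(0)=0$, and for $z\in\mathcal{O}$ put $w_z(x)\df\Phi(\abs{x-z})$. For $x\ne z$, writing $t=\abs{x-z}$, the gradient $\grad w_z(x)=\Phi'(t)(x-z)/t$ is radial, so $\langle D^2 w_z(x)\,\grad w_z(x),\grad w_z(x)\rangle=\Phi'(t)^{2}\Phi''(t)$ and hence $\ginfdel w_z(x)=\abs{\Phi'(t)}^{2-\gamma}\Phi''(t)$. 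We then choose $\Phi$ — concretely $\Phi(t)=A\bigl(1-\E^{-t/R}\bigr)$ with $A$ large — so large that
\begin{itemize}
\item[(a)] $\abs{\Phi'(t)}^{2-\gamma}\Phi''(t)+\theta_1\abs{\Phi'(t)}^{\beta}+C_0<0$ for every $t\in(0,R]$, and
\item[(b)] $\Phi(R)\ge 2K$.
\end{itemize}
Both are achievable: the concavity term $\abs{\Phi'}^{2-\gamma}\Phi''$ is strictly negative and, as the maximal slope $\Phi'(0)$ is increased, it dominates the penalty $\theta_1\abs{\Phi'}^{\beta}$ on $(0,R]$ precisely because $\beta\le 3-\gamma$ (for $\beta<3-\gamma$ any $R$ works; the borderline $\beta=3-\gamma$ forces $R$ small in terms of $\theta_1$, whence the extra restriction). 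Set $\psi_z(x)\df u(z)+w_z(x)$.

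\emph{Comparison and conclusion.}
Fix $z\in\sB_{R/2}(x_0)$, so $\overline{\sB}_R(z)\subset\overline{\sB}_{2R}(x_0)\subset\mathcal{O}$. For $x\in\partial\sB_R(z)$ we have $u(x)-\psi_z(x)=u(x)-u(z)-\Phi(R)\le 2K-\Phi(R)\le 0$ by (b), while $u(z)-\psi_z(z)=0$. Hence if $M\df\max_{\overline{\sB}_R(z)}(u-\psi_z)>0$ then $M$ is attained at some $x^{*}\in\sB_R(z)\setminus\{z\}$; there $\psi_z$ is $\cC^2$ with $\grad\psi_z(x^{*})\ne0$ (so the clause $\grad\varphi=0$ of the definition never intervenes, even when $\gamma=2$), and $\psi_z+M$ touches $u$ from above at $x^{*}$, so the subsolution inequality of the reduction gives $\ginfdel\psi_z(x^{*})+\theta_1\abs{\grad\psi_z(x^{*})}^{\beta}+C_0\ge0$, contradicting (a). Therefore $u(x)\le u(z)+\Phi(\abs{x-z})$ on $\sB_R(z)$. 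Now for $x,z\in\sB_{R/2}(x_0)$ one has $\abs{x-z}<R$, so this bound applies and, by concavity, $u(x)-u(z)\le\Phi(\abs{x-z})\le\Phi'(0)\abs{x-z}$; interchanging $x$ and $z$ gives $\abs{u(x)-u(z)}\le\Phi'(0)\abs{x-z}$. Since $\Phi'(0)$ depends only on $\gamma,\beta,\theta_1,\theta_2,\theta_3,K$ and $R$, and any compact subset of $\mathcal{O}$ is covered by finitely many such balls $\sB_{R/2}(x_0)$, $u$ is locally Lipschitz in $\mathcal{O}$ with the asserted dependence.

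\emph{Main obstacle.}
The only genuine difficulty is the gradient term $\theta_1\abs{\grad u}^{\beta}$: the barrier must be curved enough that the degenerate, $(3-\gamma)$--homogeneous diffusion $\abs{\Phi'}^{2-\gamma}\Phi''$ overcomes this penalty at large slopes, which is exactly where the structural hypothesis $\beta\le 3-\gamma$ enters and why the critical exponent $\beta=3-\gamma$ forces the barrier's radius to be chosen small in terms of $\theta_1$. Once this is understood, checking (a)--(b) for the explicit $\Phi$ above is a routine elementary computation.
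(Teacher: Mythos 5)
Your proof is correct and establishes the lemma, but it does so by a genuinely different mechanism than the paper's. The paper first lifts the inequality to the $\gamma=0$ setting ($\infdel u\geq -\abs{\grad u}^\gamma(\theta_1\abs{\grad u}^\beta+\theta_2\abs{u}+\theta_3)$), uses Young's inequality (valid because $\gamma+\beta\le 3$) to reach $\infdel u\geq -(\bar\theta_1\abs{\grad u}^3+\bar\theta_2\abs{u}^{3/(3-\gamma)}+\bar\theta_3)$, and then performs a Cole--Hopf type change of variable $w=u+\tfrac{\alpha}{2}u^2$; the extra term $\alpha(1+\alpha u)^2\abs{\grad u}^4$ generated by the transformation absorbs the cubic gradient term, reducing matters to $\infdel w\geq -\kappa$ with constant right-hand side, for which local Lipschitz regularity is quoted from Lu--Wang and Bhattacharya--Mohammed. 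You instead keep the gradient term in the equation and build it directly into the barrier: you tune a concave radial function $\Phi$ so that $\abs{\Phi'}^{2-\gamma}\Phi''+\theta_1\abs{\Phi'}^\beta+C_0<0$ on $(0,R]$, and compare $u$ against $u(z)+\Phi(\abs{\cdot-z})$ pointwise, observing that the touching point cannot be the vertex and hence the test function has nonzero gradient even in the normalized case $\gamma=2$. The upshot is that your argument is fully self-contained (no appeal to the $\infdel w\geq -\kappa$ literature), at the modest cost of the explicit barrier computation and the constraint $R\lesssim 1/\theta_1$ in the borderline case $\beta=3-\gamma$; the paper's transformation cleanly decouples the absorption of the gradient from the barrier step and produces a Lipschitz constant with no such radius restriction, but relies on an external regularity result. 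Both routes use the structural hypothesis $\beta\le 3-\gamma$ at exactly one place (your barrier inequality (a); the paper's Young step), and both ultimately rest on a concave radial comparison, so the difference is where the gradient term is disposed of.
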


\begin{proof}
The idea of the proof is inspired by \cite[Lemma~2.2(i)]{LW2010}. Due to Young's inequality we may choose
$\beta=3-\gamma$. 
First we note that if $\ginfdel u \geq - \theta_1 \abs{\grad u}^\beta -\theta_2 \abs{u}-\theta_3$ in $\mathcal{O}$, then
we also have
$$\infdel u \geq - \abs{\grad u}^\gamma (\theta_1 \abs{\grad u}^\beta + \theta_2 \abs{u}+\theta_3),$$
in $\mathcal{O}$, in viscosity sense. Thus a simple application of Young's inequality shows that 
$$\infdel u \geq -  (\bar\theta_1 \abs{\grad u}^{3} + \bar\theta_2 \abs{u}^{\frac{3}{3-\gamma}}+ \bar\theta_3),$$
in $\mathcal{O}$ for some $\bar{\theta}_i>0, i=1,2,3$.

Without loss of generality, we may assume that $u\geq 0$. Now we choose $\alpha>0$ small enough so that $\alpha\norm{u}_{L^\infty(\mathcal{O})}<\frac{1}{2}$. Define $w(x) = u(x) +\frac{\alpha}{2} u^2(x)$. A simple calculation yields that 
\begin{align*}
\infdel w & = (1+\alpha u)^3 \infdel u + \alpha (1+\alpha u)^2 \abs{\grad u}^4
\\
&\geq (1+\alpha u)^3\left[- \bar\theta_1 \abs{\grad u}^3 - \bar\theta_2 \abs{u}^{\frac{3}{3-\gamma}} - \bar\theta_3 + \frac{\alpha}{1+\alpha u} \abs{\grad u}^4\right].
\end{align*}
Using Minkowski's inequality, we find that 
$$\bar\theta_1 \abs{\grad u}^3\,\leq\, \frac{1}{4} (\bar\theta_1)^{4} \left[ \frac{1+\alpha u}{\alpha}\right]^{3}
+ \frac{3}{4} \frac{\alpha}{1+\alpha u} \abs{\grad u}^4.$$
Since $1\leq 1+ \alpha u<2$, we find a constant $\kappa$, depending on $\norm{u}_{L^\infty(\mathcal{O})}, \bar\theta_i, i=1,2,3,$ such that
$\infdel w\geq -\kappa$ in $\mathcal{O}$. This implies that $w$ is locally Lipschitz (cf. \cite[Lemma~2.2(i)]{LW2010}, \cite[Theorem~2.4]{BM12}).
The proof now follows by noticing that $u=\frac{1}{\alpha}(\sqrt{1+2\alpha w}-1)$.
\end{proof}

Now we prove a comparison principle in bounded domain.  It generalizes the results in \cite{ASS11,CP09}.

\begin{theorem}\label{T2.1}
Let $\mathcal{O}\subset\Rd$ be a bounded  domain
and $c, h_1, h_2\in \cC(\bar{\mathcal{O}})$. Let $F:\R\to\R$ be a continuous, strictly increasing function.
Suppose that $u\in USC(\bar{\mathcal{O}})$ is a bounded subsolution to  
\begin{equation}\label{ET2.1A}
\ginfdel u + H(x, \grad u) + c(x) F(u(x))=h_1(x)\quad \text{in} \; \mathcal{O},
\end{equation}
and $v\in LSC(\bar{\mathcal{O}})$ is a bounded super-solution to \eqref{ET2.1A} with $h_1$ replaced by $h_2$.
Furthermore, assume that $v\geq u$ on $\partial \mathcal{O}$ and one of the following holds.
\begin{itemize}
\item[(a)] $c<0$ in $\bar{\mathcal{O}}$ and $h_1\geq h_2$ in $\bar{\mathcal{O}}$.
\item[(b)] $c\leq 0$ in $\bar{\mathcal{O}}$ and $h_1> h_2$ in $\bar{\mathcal{O}}$.
\end{itemize}
Then we have $v\geq u$ in $\bar{\mathcal{O}}$.
\end{theorem}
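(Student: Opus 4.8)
The plan is to argue by contradiction using the standard doubling-of-variables technique adapted to the $(3-\gamma)$-homogeneous operator $\ginfdel$, following the strategy of Armstrong--Smart--Somersille \cite{ASS11} but accommodating the zeroth-order term $c(x)F(u)$ and the Hamiltonian $H$ satisfying the two structural hypotheses listed above. Suppose $M \df \sup_{\bar{\mathcal O}}(u-v) > 0$. Since $v \geq u$ on $\partial\mathcal O$, any maximizing point lies in the interior, and in fact we will work with the penalized maximum: for $\eps > 0$ introduce
\begin{equation*}
\Phi_\eps(x,y) = u(x) - v(y) - \frac{1}{2\eps}\abs{x-y}^2,
\end{equation*}
and let $(x_\eps, y_\eps)$ be a maximizer over $\bar{\mathcal O}\times\bar{\mathcal O}$. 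Standard viscosity-solution lemmas (see \cite{CIL}) give that $\frac{1}{\eps}\abs{x_\eps-y_\eps}^2 \to 0$, that $x_\eps, y_\eps$ converge (along a subsequence) to a common interior point $\hat x$ where $u(\hat x)-v(\hat x) = M$, and, via the theorem on sums, the existence of symmetric matrices $X_\eps, Y_\eps$ with $X_\eps \leq Y_\eps$ and $(p_\eps, X_\eps) \in \bar J^{2,+}_{\mathcal O} u(x_\eps)$, $(p_\eps, Y_\eps) \in \bar J^{2,-}_{\mathcal O} v(y_\eps)$, where $p_\eps = \frac{1}{\eps}(x_\eps - y_\eps)$ is the common gradient slot.

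The delicate point is the degeneracy at $p_\eps = 0$, which is exactly why the case $\gamma = 2$ carries the extra requirement (iii) in the definition of viscosity solution. If $p_\eps \neq 0$ we plug the jets directly into the two inequalities: from the subsolution,
\begin{equation*}
\frac{1}{\abs{p_\eps}^\gamma}\langle p_\eps, X_\eps p_\eps\rangle + H(x_\eps, p_\eps) + c(x_\eps)F(u(x_\eps)) \geq h_1(x_\eps),
\end{equation*}
and the analogous $\leq h_2(y_\eps)$ for the supersolution with $Y_\eps$. Subtracting, the second-order terms obey $\langle p_\eps, X_\eps p_\eps\rangle \leq \langle p_\eps, Y_\eps p_\eps\rangle$ since $X_\eps \leq Y_\eps$, so the $\ginfdel$ contributions cancel favorably; the Hamiltonian difference is controlled by $\omega(\abs{x_\eps-y_\eps})(1+\abs{p_\eps}^\beta)$, which tends to $0$ because $\abs{x_\eps-y_\eps}\to 0$ and $\abs{p_\eps}^\beta \abs{x_\eps - y_\eps} \to 0$ (here one uses $\abs{p_\eps}\abs{x_\eps-y_\eps} = \frac{1}{\eps}\abs{x_\eps-y_\eps}^2 \to 0$ together with $\beta \le 3-\gamma \le 3$, interpolating against the bound $\abs{p_\eps}^2 \abs{x_\eps-y_\eps}^2 \to 0$ — this is the one routine estimate I would spell out carefully). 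In the case $p_\eps = 0$ for a sequence $\eps \to 0$, one invokes clause (iii): $\grad\varphi = 0$ forces $M(X_\eps) \geq h_1(x_\eps) - H(x_\eps,0) - c(x_\eps)F(u(x_\eps))$ and $m(Y_\eps) \leq h_2(y_\eps) - H(y_\eps,0) - c(y_\eps)F(v(y_\eps))$, and $X_\eps \leq Y_\eps$ gives $M(X_\eps) \leq M(Y_\eps)$, hmm — more carefully $m(Y_\eps) \le m(X_\eps) \le M(X_\eps)$ is false in general; rather one uses $X_\eps \le Y_\eps \Rightarrow M(X_\eps) \le M(Y_\eps)$ and that the test function $\frac{1}{2\eps}\abs{x-y}^2$ contributes $X_\eps \le \frac1\eps I$ while $Y_\eps \ge -\frac1\eps I$, and in fact for $\gamma=2$ the cleanest route is to note $M(X_\eps) \ge 0 \ge m(Y_\eps)$ is the wrong sign, so instead one keeps track of the full Jensen--Ishii matrices where $X_\eps, Y_\eps \to 0$ in this degenerate branch, making all second-order terms vanish in the limit.

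Passing to the limit $\eps \to 0$ in the subtracted inequality and using continuity of $c, h_1, h_2, F, u, v$ at $\hat x$ (recall $u$ is USC, $v$ is LSC, but along the maximizing sequence $u(x_\eps) \to u(\hat x)$ and $v(y_\eps) \to v(\hat x)$), one obtains
\begin{equation*}
c(\hat x)\bigl(F(u(\hat x)) - F(v(\hat x))\bigr) \geq h_1(\hat x) - h_2(\hat x).
\end{equation*}
Since $M > 0$ means $u(\hat x) > v(\hat x)$, and $F$ is strictly increasing, $F(u(\hat x)) - F(v(\hat x)) > 0$. In case (a), $c(\hat x) < 0$ makes the left side strictly negative while $h_1(\hat x) - h_2(\hat x) \geq 0$, a contradiction. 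In case (b), $c(\hat x) \leq 0$ makes the left side $\leq 0$ while $h_1(\hat x) - h_2(\hat x) > 0$, again a contradiction. Hence $M \leq 0$, i.e. $v \geq u$ on $\bar{\mathcal O}$. The main obstacle, as indicated, is handling the degenerate branch $p_\eps = 0$ cleanly for $\gamma = 2$ and controlling the Hamiltonian error term $\omega(\abs{x_\eps-y_\eps})(1+\abs{p_\eps}^\beta)$ — both are manageable precisely because of the restriction $\beta \le 3-\gamma$ and the quadratic penalization, which together guarantee $\abs{p_\eps}^\beta\abs{x_\eps-y_\eps}\to 0$.
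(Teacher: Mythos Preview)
Your overall architecture is the same as the paper's --- doubling of variables, Ishii's theorem, subtract and pass to the limit --- but there are two genuine gaps, and the paper resolves both by choices you did not make.

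\medskip
\textbf{The Hamiltonian error.} With quadratic penalization the gradient slot $p_\eps=\tfrac{1}{\eps}(x_\eps-y_\eps)$ is \emph{not} known to be bounded: the standard lemma only gives $\tfrac{1}{\eps}\abs{x_\eps-y_\eps}^2\to 0$. Your claim that $\omega(\abs{x_\eps-y_\eps})(1+\abs{p_\eps}^\beta)\to 0$ therefore does not follow; $\omega$ is an arbitrary modulus, so no interpolation against $\abs{p_\eps}\abs{x_\eps-y_\eps}\to 0$ helps. (Even the intermediate assertion $\abs{p_\eps}^\beta\abs{x_\eps-y_\eps}\to 0$ fails: take $\abs{p_\eps}\sim\eps^{-\alpha}$ with $\alpha$ just below $\tfrac12$ and $\beta=3$.) The paper's fix is twofold: it uses the \emph{quartic} penalty $\tfrac{1}{4\eps}\abs{x-y}^4$, so that $\eta_\eps=\tfrac{1}{\eps}\abs{x_\eps-y_\eps}^2(x_\eps-y_\eps)$, and it invokes Lemma~2.1 (local Lipschitz regularity of bounded sub/supersolutions of this equation) to obtain $\abs{x_\eps-y_\eps}^3\le 4\eps L$. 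Together these give $\abs{\eta_\eps}\le 4L$ uniformly, so the error term $\omega(\abs{x_\eps-y_\eps})(1+\abs{\eta_\eps}^\beta)$ trivially vanishes. You never appeal to Lemma~2.1, and with only quadratic penalization no such bound is available.

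\medskip
\textbf{The degenerate branch for $\gamma=2$.} You correctly notice that with $p_\eps=0$ the inequalities $M(X_\eps)\ge\cdots$ and $m(Y_\eps)\le\cdots$ do not combine well, but the proposed escape (``$X_\eps,Y_\eps\to 0$ in this degenerate branch'') is not justified: with quadratic penalty $D^2\theta_\eps\equiv\tfrac{1}{\eps}\begin{psmallmatrix}I&-I\\-I&I\end{psmallmatrix}$ regardless of whether $x_\eps=y_\eps$, so Ishii's inequality gives no better than $X_\eps\le Y_\eps$ with both of order $\tfrac{1}{\eps}$. The quartic penalty resolves this cleanly because its Hessian vanishes at $x=y$; hence when $\eta_\eps=0$ (equivalently $x_\eps=y_\eps$) the matrix inequality yields $X\le 0\le Y$, so $M(X)\le 0\le m(Y)$ and the subtraction goes through exactly as in the non-degenerate case.

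\medskip
In short: replace $\tfrac{1}{2\eps}\abs{x-y}^2$ by $\tfrac{1}{4\eps}\abs{x-y}^4$ and insert an appeal to Lemma~2.1 to bound $\abs{\eta_\eps}$. With those two changes your outline becomes the paper's proof.
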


\begin{proof}
We suppose by contradiction that $M=\max_{\bar{\mathcal{O}}}(u-v)>0$. Consider
$$w_\varepsilon(x, y)= u(x)-v(y) -\frac{1}{4\varepsilon}\abs{x-y}^4 \quad \text{for}\; x, y\in\bar{\mathcal{O}}.$$
Note that the maximum of $w_\varepsilon$ (say, $M_\varepsilon$) is bigger than $M$ for all $\varepsilon$.
Let $(x_\varepsilon, y_\varepsilon)\in\mathcal{O}\times\mathcal{O}$ be a point of maximum for $w_\varepsilon$.
It is then standard to show that (cf. \cite[Lemma~3.1]{CIL})
\begin{align*}
\lim_{\varepsilon\to 0} M_\varepsilon=M, \quad 
\lim_{\varepsilon\to 0}\frac{1}{4\varepsilon}\abs{x_\varepsilon-y_\varepsilon}^4=0.
\end{align*}
This of course, implies that $u(x_\varepsilon)-v(y_\varepsilon)\searrow M$, as $\varepsilon\to 0$. Again, since the maximizer  can not move towards the boundary
we can find a subset $\mathcal{O}_1\Subset\mathcal{O}$ such that 
$x_\varepsilon, y_\varepsilon\in\mathcal{O}_1$ for all $\varepsilon$ small. Since $u, v$ are Lipschitz continuous in $\mathcal{O}_1$, by Lemma~\ref{L2.1}, we can find a constant
$L$ such that
$$\abs{u(z_1)-u(z_2)} + \abs{v(z_1)-v(z_2)}\leq L\abs{z_1-z_2}\quad z_1, z_2\in\mathcal{O}_1.$$
Observing
$$u(x_\varepsilon)-v(x_\varepsilon)\leq u(x_\varepsilon)-v(y_\varepsilon) -\frac{1}{4\varepsilon}\abs{x_\varepsilon-y_\varepsilon}^4,$$
we obtain 
\begin{equation}\label{ET2.1B}
\abs{x_\varepsilon-y_\varepsilon}^3\,\leq\, 4\varepsilon L.
\end{equation}
Denote by $\eta_\varepsilon=\frac{1}{\varepsilon}\abs{x_\varepsilon-y_\varepsilon}^2(x_\varepsilon-y_\varepsilon)$ and $\theta_\varepsilon(x, y)=\frac{1}{4\varepsilon}\abs{x-y}^4$.
It then follows from \cite[Theorem~3.2]{CIL} that for some $X, Y\in\bS^{d\times d}$ we have $(\eta_\varepsilon , X)\in\bar{J}^{2, +}_\mathcal{O} u(x_\varepsilon)$,
$(\eta_\varepsilon, Y)\in\bar{J}^{2, -}_\mathcal{O} v(y_\varepsilon)$ and
\begin{equation}\label{ET2.1C}
\begin{pmatrix}
X & 0\\
0 & -Y
\end{pmatrix}
\leq 
D^2\theta_\varepsilon(x_\varepsilon, y_\varepsilon) + \varepsilon [D^2\theta_\varepsilon(x_\varepsilon, y_\varepsilon)]^2.
\end{equation}
In particular, we get $X\leq Y$. Moreover, if $\eta_\varepsilon=0$, we have $x_\varepsilon= y_\varepsilon$. Then from \eqref{ET2.1C} it follows that
\begin{equation}\label{ET2.1E}
\begin{pmatrix}
X & 0\\
0 & -Y
\end{pmatrix}
\leq 
\begin{pmatrix}
0 & 0\\
0 & 0
\end{pmatrix}.
\end{equation}
In particular, \eqref{ET2.1E} implies that $X\leq 0\leq Y$ and therefore, $M(X)\leq 0\leq m(Y)$.
Applying the definition of superjet and subjet we now obtain for $\eta_\varepsilon\neq 0$
\begin{align*}
h_1(x_\varepsilon)&\leq \abs{\eta_\varepsilon}^{-\gamma} \langle \eta_\varepsilon X, \eta_\varepsilon \rangle +c(x_\varepsilon) F(u(x_\varepsilon))
+H(x_\varepsilon, \eta_\varepsilon)
\\
& \leq \abs{\eta_\varepsilon}^{-\gamma} \langle \eta_\varepsilon Y, \eta_\varepsilon \rangle + c(x_\varepsilon) F(u(x_\varepsilon))
+ H(x_\varepsilon, \eta_\varepsilon)
\\
&\leq h_2(y_\varepsilon)- c(y_\varepsilon) F(v(y_\varepsilon)) + c(x_\varepsilon) F(u(x_\varepsilon))
+ H(x_\varepsilon, \eta_\varepsilon) -H(y_\varepsilon, \eta_\varepsilon)
\\
&\leq h_2 (y_\varepsilon) + F(v(y_\varepsilon)) (c(x_\varepsilon)-c(y_\varepsilon)) + [\min_{\bar{\mathcal{O}}} c] \left(F(u(x_\varepsilon))- F(v(y_\varepsilon))\right)
+\omega(\abs{x_\varepsilon-y_\varepsilon})(1+ \abs{\eta_\varepsilon}^\beta).
\end{align*}
Letting $\varepsilon\to 0$ and using \eqref{ET2.1B}, we find 
$$ [\min_{\bar{\mathcal{O}}} c]\, \sup_{s\in[-\norm{v}_{L^\infty(\mathcal{O})}, 
\norm{v}_{L^\infty(\mathcal{O})}]} (F(s+M)-F(s)) + \max_{\bar{\mathcal{O}}} (h_2-h_1)\geq 0\,.$$
This is a contradiction to (a) and (b) and thus
we prove that $u\leq v$ in $\mathcal{O}$. The argument also works when $\eta_\varepsilon=0$ and $\gamma=2$.
The result then follows.
\end{proof}
Next we prove a strong maximum principle and a Hopf's lemma.
\begin{theorem}[Strong maximum principle]\label{T2.2}
Let $\mathcal{O}$ be a bounded domain and $q$, $c$ are continuous functions in $\bar{\mathcal{O}}$. If $v\in LSC (\overline{\mathcal{O}})$ is a non-negative viscosity super-solution of
\begin{equation}\label{ET2.2A}
\ginfdel v + q(x)\cdot \grad v(x) | \grad v(x)|^{2-\gamma}+c(x)v^{3-\gamma}(x)=0, \quad x\in\mathcal{O}\,.
\end{equation}
then either $v\equiv 0$ or $v>0$ in $\mathcal{O}$. Furthermore, assume that $\mathcal{O}$ satisfies an interior
sphere condition and $v(x)>v(z)=0$ for all $x\in\mathcal{O}$ and some $z\in\partial\mathcal{O}$. Then for some
constant $\nu>0$ we have
\begin{equation}\label{ET2.2AA}
v(x)\geq\; \nu (r-\abs{x-x_0}),  \quad \text{for}\;  x\in\sB_r(x_0),
\end{equation}
where $\sB_r(x_0)\subset\mathcal{O}$ is a ball touching the point $z$.
\end{theorem}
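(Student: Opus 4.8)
The plan is to derive both assertions from a single explicit radial subsolution together with the comparison principle, Theorem~\ref{T2.1}. First I reduce to a setting where Theorem~\ref{T2.1} is applicable. Since $v\ge 0$ and $c$ is bounded on $\overline{\mathcal{O}}$, the inequality $-\norm{c}_{L^\infty(\mathcal{O})}\,s^{3-\gamma}\le c(x)\,s^{3-\gamma}$ for $s\ge 0$ shows that $v$ is also a viscosity supersolution, in $\mathcal{O}$, of
$$\ginfdel v + q(x)\cdot\grad v\,\abs{\grad v}^{2-\gamma} - \norm{c}_{L^\infty(\mathcal{O})}\,\tilde F(v)\,=\,0,$$
where $\tilde F(s):=\sgn(s)\abs{s}^{3-\gamma}$ is a continuous, strictly increasing extension of $s\mapsto s^{3-\gamma}$ to $\R$; that is, $v$ supersolves an equation with a constant, nonpositive zeroth-order coefficient and Hamiltonian $q(x)\cdot p\,\abs{p}^{2-\gamma}$ (which meets the hypotheses of Theorem~\ref{T2.1} with $\beta=3-\gamma$), so Theorem~\ref{T2.1}(b) is at our disposal. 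By Lemma~\ref{L2.1}, $v$ is moreover locally Lipschitz, hence continuous, in $\mathcal{O}$.

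Now the barrier. Let $\sB_r(x_0)\subset\mathcal{O}$ and fix $m$ with $0<m\le\min_{\partial\sB_{r/2}(x_0)}v$ (this minimum being positive whenever $v>0$ on that sphere). On the annulus $A:=\sB_r(x_0)\setminus\overline{\sB_{r/2}(x_0)}$ put $\psi(x)=\phi(\abs{x-x_0})$, where
$$\phi(t)=b\bigl(\E^{\lambda(r-t)}-1\bigr),$$
$\lambda>0$ being chosen so large that $\lambda^{3-\gamma}\bigl(\lambda-\norm{q}_{L^\infty(\mathcal{O})}\bigr)\ge 1+\norm{c}_{L^\infty(\mathcal{O})}$ and then $b>0$ so small that $b(\E^{\lambda r/2}-1)\le m$. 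Then $\phi$ is positive and strictly decreasing on $[0,r]$, $\phi(r)=0$, and $\grad\psi\ne 0$ on $\overline A$, so $\psi$ is an admissible $\cC^2$ test function and the degenerate alternative (iii) in the definition of viscosity solution never intervenes. Using $\ginfdel\psi=\abs{\phi'}^{2-\gamma}\phi''$ along radii, the bound $q(x)\cdot\grad\psi\,\abs{\grad\psi}^{2-\gamma}\ge -\norm{q}_\infty\abs{\phi'}^{3-\gamma}$, and $\E^{\lambda(r-t)}-1\le \E^{\lambda(r-t)}$, a short computation gives on $\overline A$
$$\ginfdel\psi + q\cdot\grad\psi\,\abs{\grad\psi}^{2-\gamma} - \norm{c}_\infty\,\psi^{3-\gamma}\;\ge\; b^{3-\gamma}\,\E^{(3-\gamma)\lambda(r-\abs{x-x_0})}\;\ge\; b^{3-\gamma}\;>\;0 .$$
Hence $\psi$ is a classical (so viscosity) subsolution of the $(-\norm{c}_\infty)$-equation with right-hand side $h_1:=b^{3-\gamma}>0$, and since $\psi=0\le v$ on $\partial\sB_r(x_0)$ and $\psi\le m\le v$ on $\partial\sB_{r/2}(x_0)$, Theorem~\ref{T2.1}(b) with $h_2\equiv0$ gives $v\ge\psi$ on $\overline A$. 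Hopf's lemma follows at once: by the interior sphere condition choose $x_0,r$ with $\sB_r(x_0)\subset\mathcal{O}$ and $z\in\partial\sB_r(x_0)$; since $v>0$ in $\mathcal{O}$ the number $m$ above is positive, so from $v\ge\psi$ on $\overline A$ and $\E^s-1\ge s$ we obtain $v(x)\ge b\lambda\,(r-\abs{x-x_0})$ for $r/2\le\abs{x-x_0}\le r$, while $v\ge m':=\inf_{\overline{\sB_{r/2}(x_0)}}v>0$ on $\overline{\sB_{r/2}(x_0)}$; thus \eqref{ET2.2AA} holds with $\nu:=\min\{b\lambda,\ m'/r\}$.

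For the strong maximum principle, suppose $v\not\equiv0$ yet $v(x_1)=0$ for some $x_1\in\mathcal{O}$. Then $U:=\{v>0\}$ is open, nonempty and, $\mathcal{O}$ being connected, $\partial U\cap\mathcal{O}\ne\emptyset$; choosing $x_0\in U$ close enough to a point of $\partial U\cap\mathcal{O}$ we may arrange $\rho:=\dist(x_0,\partial U)<\dist(x_0,\partial\mathcal{O})$, so that $\sB_\rho(x_0)\subset U$, $\overline{\sB_\rho(x_0)}\subset\mathcal{O}$, and $v(z)=0$ for some $z\in\partial\sB_\rho(x_0)$ (lower semicontinuity at $z$ together with $v\ge0$ force $v(z)=0$). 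Apply the barrier construction with $r=\rho$: then $v\ge\psi$ on $\overline A$, and extending $\psi$ near $z$ by the same formula $\phi(\abs{x-x_0})$ — which is negative for $\abs{x-x_0}>\rho$ — we get $v\ge\psi$ in a full neighbourhood of $z$ with $v(z)=\psi(z)=0$. Consequently $v-\bigl(\psi-\eps\abs{x-z}^2\bigr)$ attains a strict local minimum at $z$ for every $\eps>0$; testing the supersolution inequality for $v$ at $z$ against the $\cC^2$ function $\psi-\eps\abs{x-z}^2$, whose gradient at $z$ is $\grad\psi(z)\ne0$ and recalling $v(z)=0$ so that the zeroth-order term vanishes, gives
$$\ginfdel\psi(z) + q(z)\cdot\grad\psi(z)\,\abs{\grad\psi(z)}^{2-\gamma}\;\le\; 2\eps\,\abs{\grad\psi(z)}^{2-\gamma}.$$
Taking $\eps$ small makes the right-hand side strictly less than $b^{3-\gamma}$, contradicting the strict subsolution bound above evaluated at $z$ (where $\psi(z)=0$). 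Hence $v\equiv0$ or $v>0$ in $\mathcal{O}$.

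The only genuinely delicate ingredient is the construction of $\psi$: it must at once be a \emph{strict} subsolution of an equation that $v$ supersolves although $q$ and $c$ are sign-indefinite, have boundary values dominated by $v$, and stay an admissible test function with nonvanishing gradient — the last point also side-stepping the degenerate case $\gamma=2$. These requirements are reconciled by absorbing the potential into the negative constant $-\norm{c}_\infty$ (which is where $v\ge0$ is used and which brings Theorem~\ref{T2.1} into play) together with an exponential profile whose rate $\lambda$ is taken large enough to dominate both $\norm{q}_\infty$ and $\norm{c}_\infty$, while the amplitude $b$ stays free to match the boundary data.
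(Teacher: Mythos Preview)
Your argument is correct and follows the same route as the paper: an exponential radial barrier on an annulus, shown to be a strict subsolution once the potential is absorbed into a negative constant, combined with the comparison principle Theorem~\ref{T2.1} and a touching argument at the zero point. One minor slip: Lemma~\ref{L2.1} is stated for \emph{sub}solutions, so your claim that $v$ is locally Lipschitz via that lemma is unjustified---but this is inessential, since lower semicontinuity already yields the positive infima you need and the comparison goes through because the barrier $\psi$ is smooth.
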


\begin{proof}
Note that without any loss of generality we may assume that $c<0$ in $\bar{\mathcal{O}}$. Suppose that $v\gneq 0$ in $\mathcal{O}$. We show that $v>0$ in $\mathcal{O}$. On the contrary, suppose
that there exists $(x_0, r)\in\mathcal{O}\times(0, \infty)$ such that $\sB_{2r}(x_0)\Subset\mathcal{O}$, $v>0$ in $\sB_{r}(x_0)$ and $v(z)=0$ for some $z$ satisfying $\abs{x_0-z}=r$. For simplicity
we also assume that $x_0=0$. Now we construct a test function using the ideas from \cite{SP}.
Let $u(x)=e^{-\alpha |x|}-e^{-\alpha r}$. Then $u> 0$ in $\sB_r(0)$.
A straight-forward calculation shows that for $\frac{r}{2}\leq x\leq r$ we have
\begin{align}\label{ET2.2B}
&\ginfdel u(x) +q(x)\cdot \grad u(x) | \grad u(x)|^{2-\gamma}+c(x) u^{3-\gamma}(x)\nonumber
\\
&\geq \, e^{-\alpha (3-\gamma) |x|}\left[\alpha^{4-\gamma}-\norm{q}_{L^\infty(\sB_{2r})} \alpha^{3-\gamma} -\norm{c}_{L^\infty(\sB_{2r})} \left(1-e^{-\alpha(r-\abs{x})} \right)^{3-\gamma} \right]>0,
\end{align}
if we choose $\alpha$ large enough. Now choose $\kappa$ small enough so that $\kappa u\leq v$ on $\partial\sB_{\frac{r}{2}}$. This is possible since $v$ is positive on $\partial\sB_{\frac{r}{2}}$.
Thus by Theorem~\ref{T2.1} we get $v(x)\geq \kappa u(x)$ for $r/2\leq |x|\leq r$. On the other hand, $\kappa u\leq 0\leq v$ in $\sB_{2r}\setminus\sB_r$. Thus $\kappa u\prec_{z} v$ and $v(z)=0$.
By the definition of viscosity solution we must have
$$\ginfdel (\kappa u)(z) + q(z)\cdot (\kappa \grad u)(z) |\kappa \grad u(z)|^{2-\gamma}+c(z)v^{3-\gamma}(z)\leq 0\,,$$
which is a contradiction to \eqref{ET2.2B}. Therefore, we must have $v>0$ in $\Omega$. This proves the
first part of the theorem.

Also, \eqref{ET2.2AA} follows by repeating the above argument and using the fact that for any
$a>0$ there exists $\kappa>0$ satisfying $1-e^{-s}\geq \kappa s$ for all $s\in [0, a]$.
This completes the proof.
\end{proof}

Now we are ready to prove an existence result suited for our purpose.

\begin{theorem}\label{T2.3}
Let $\mathcal{O}\subset\Rd$ be a bounded $\cC^1$ domain.
Suppose that $c, h, q\in \cC(\bar{\mathcal{O}})$ and $g\in\cC(\partial\mathcal{O})$. Also, assume that $c<0$ in 
$\bar{\mathcal{O}}$. Suppose that 
$\bar u\in \cC(\bar{\mathcal{O}})$ is a super-solution and  $\underline u\in \cC(\bar{\mathcal{O}})$ is a subsolution to
\begin{equation}\label{ET2.3A}
\begin{split}
\ginfdel u + q(x)\cdot \grad u(x) \abs{\grad u(x)}^{2-\gamma} + c(x) u^{3-\gamma}(x)&=h(x)\quad \text{in} \; \mathcal{O}\,,
\\
u &= g \quad \text{on} \; \partial\mathcal{O}\,.
\end{split}
\end{equation}
with $\bar u\geq \underline u\geq 0$. We also assume that $\underline{u}=g$ on $\partial\mathcal{O}$. 
Then equation \eqref{ET2.3A} admits a unique solution $u$ satisfying $\bar u\geq u\geq \underline u\geq 0$ in $\bar{\mathcal{O}}$.
\end{theorem}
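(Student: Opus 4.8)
The strategy is classical: uniqueness is read off from the comparison principle of Theorem~\ref{T2.1}, while existence is produced by Perron's method, the only non-routine point being the degeneracy of $\ginfdel$. Throughout, the Hamiltonian $H(x,p)=q(x)\cdot p\,\abs{p}^{2-\gamma}$ satisfies the two structural requirements imposed in Section~\ref{S-max} with $\beta=3-\gamma$ (the spatial modulus coming from continuity of $q$ on the compact set $\bar{\mathcal O}$), and $s\mapsto s^{3-\gamma}$ extends to a continuous strictly increasing $F:\R\to\R$, which is harmless since every competitor below will be nonnegative.

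\emph{Uniqueness.} Any nonnegative solution $u$ of \eqref{ET2.3A} lies between the continuous functions $\underline u$ and $\bar u$, hence is bounded, and equals $g$ on $\partial\mathcal O$ because it is simultaneously a sub- and a supersolution. If $u_1,u_2$ are two such solutions, applying Theorem~\ref{T2.1}(a) with $h_1=h_2=h$, $c<0$, once with $(u,v)=(u_1,u_2)$ and once with $(u,v)=(u_2,u_1)$, yields $u_1=u_2$.

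\emph{Existence.} Let $\mathcal S$ be the family of subsolutions $w$ of \eqref{ET2.3A} with $\underline u\le w\le\bar u$ on $\bar{\mathcal O}$, which is nonempty since $\underline u\in\mathcal S$, and set $u=\sup_{w\in\mathcal S}w$, so $\underline u\le u\le\bar u$. I would then run Perron's method in three steps. First, the upper semicontinuous envelope $u^*$ is again a subsolution (stability of subsolutions under suprema, cf.\ \cite{CIL}); the only delicate point is a $\cC^2$ test function $\varphi$ touching $u^*$ at a point where $\grad\varphi=0$, which for $\gamma\in[0,2)$ is fine because $\ginfdel\varphi$ remains well defined there, and for $\gamma=2$ is precisely handled by clause (iii) of the definition of viscosity solution. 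Second, the lower semicontinuous envelope $u_*$ is a supersolution: if it were not at some $x_0\in\mathcal O$, then necessarily $u_*(x_0)<\bar u(x_0)$ (otherwise a test function touching $u_*$ from below at $x_0$ would touch $\bar u$ too, contradicting that $\bar u$ is a supersolution), and adding a small positive constant to that test function on a small ball $\sB_r(x_0)$ produces a classical subsolution $W$ there with $W<u$ on $\partial\sB_r(x_0)$, $W\le\bar u$, and $W(x_1)>u(x_1)$ at some $x_1$ near $x_0$ where $u$ approaches $u_*(x_0)$; then $\max(u,W)$ inside $\sB_r(x_0)$, glued to $u$ outside, belongs to $\mathcal S$ and exceeds $u$ at $x_1$, contradicting maximality. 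Third, every $w\in\mathcal S$ obeys $\underline u\le w\le g$ on $\partial\mathcal O$ with $\underline u=g$, so $u=g$ pointwise on $\partial\mathcal O$; to upgrade this to $u^*=u_*=g$ there I would, for each $z\in\partial\mathcal O$ and $\eps>0$, use the $\cC^1$ regularity of $\partial\mathcal O$ to construct a local upper barrier, namely a supersolution $\psi$ of \eqref{ET2.3A} near $z$ of the form $g(z)+\eps$ plus a modulus-of-continuity correction plus a suitable radial supersolution of $\ginfdel$ (with the bounded term $q\cdot\grad\psi\,\abs{\grad\psi}^{2-\gamma}$ and the sign-favourable term $c(x)\psi^{3-\gamma}\le 0$ absorbed by choosing the radial exponent large), with $\psi\ge g$ on $\partial\mathcal O$ and $\psi(z)\le g(z)+\eps$; Theorem~\ref{T2.1}(a) then gives $w\le\psi$ for all $w\in\mathcal S$, hence $u^*(z)\le g(z)+\eps$, and letting $\eps\downarrow0$ together with $u_*\ge\underline u=g$ on $\partial\mathcal O$ gives $u^*=u_*=g$ on $\partial\mathcal O$. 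With these three steps, $u^*$ is a bounded subsolution and $u_*$ a bounded supersolution of \eqref{ET2.3A} agreeing on $\partial\mathcal O$, so Theorem~\ref{T2.1}(a) gives $u^*\le u_*$ on $\bar{\mathcal O}$; since always $u_*\le u^*$, the function $u=u^*=u_*\in\cC(\bar{\mathcal O})$ solves \eqref{ET2.3A}, and it is unique by the first part.

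\emph{Main obstacle.} The uniqueness and the algebraic bookkeeping of the bump construction are routine; the genuinely delicate points are the stability of sub/supersolutions for the $(3-\gamma)$-homogeneous operator $\ginfdel$ at critical points of test functions --- in particular the $\gamma=2$ case, which must pass through clause (iii) --- and the construction of boundary barriers compatible at once with the degenerate diffusion and the nonlinear gradient term $q(x)\cdot\grad u\,\abs{\grad u}^{2-\gamma}$ on a merely $\cC^1$ domain.
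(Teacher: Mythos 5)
Your proposal is correct and runs on the same rails as the paper's: comparison (Theorem~\ref{T2.1}) gives uniqueness, Perron's method gives existence, envelope stability plus a Perron bump supplies the missing sub/supersolution inequality, and a radial barrier at the boundary closes the gap. The only real difference is which side you anchor the Perron construction on. The paper sets $\cA=\{\psi\in LSC(\bar{\mathcal O}): \psi\ \text{supersolution},\ \psi\le\bar u\}$, takes $v=\inf_{\psi\in\cA}\psi$, proves directly that the LSC envelope $v_*$ is a supersolution (via the sequence $(\psi_\varepsilon,x_\varepsilon)$ argument), defers the subsolution property of $v$ to the bump argument of \cite[Theorem~1]{LW2008}, and obtains $v\ge\underline u\ge g$ on $\partial\mathcal O$ from comparison with $\underline u$. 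You instead take the sup of subsolutions pinched between $\underline u$ and $\bar u$, spell out the bump argument (including the useful observation that failure of the supersolution property forces $u_*(x_0)<\bar u(x_0)$, giving room for the bump below $\bar u$), invoke stability of subsolutions under sup for the other side, and rely on the same kind of radial barrier. These are dual and equally standard; neither buys anything substantive over the other. Two small cautions: (i) your barrier sketch, like the paper's, quietly uses an exterior-sphere-type comparison geometry; for a merely $\cC^1$ boundary this needs a word, though the paper has the same omission; (ii) in verifying that $u_*$ cannot equal $\bar u$ at a failure point, be sure to note that the supersolution inequality for $\bar u$ is tested with $F(x_0,\bar u(x_0))=F(x_0,u_*(x_0))$, so the test function transfers verbatim --- which you do implicitly, but it is worth stating since $F$ enters nonlinearly.
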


\begin{proof} The
uniqueness follows from Theorem~\ref{T2.1}.
The existence follows from standard Perron's method and construction of an appropriate barrier function at the boundary.
We sketch a proof here for completeness. Let
$$\cA=\{\psi\in LSC(\bar{\mathcal{O}})\; :\;  \psi\; \text{is a super-solution to \eqref{ET2.3A} and}\; \psi\leq\bar u\}.$$
This represents collection of all super-solutions below $\bar u$. 
Let $v(x)=\inf_{\psi\in\cA}\psi(x)$ and
$$v_*(x)=\lim_{r\to 0}\; \inf\{v(y)\; :\; y\in\bar{\mathcal{O}}, \quad \abs{x-y}\leq r\},$$
be the LSC envelope of $v$ (cf. \cite[p.~22]{CIL}). Keep in mind that by definition of super-solution, $v\geq g$ on $\partial \mathcal{O}$.
 We claim that $v_*\in\cA$ which would then imply $v=v_*$. By Lemma~\ref{L2.1} it follows that $v$ is locally Lipschitz continuous
in $\mathcal{O}$. Now suppose that $v_*$ is not a super-solution. Suppose that for some $\varphi\in\cC^2(\mathcal{O})$ such that $\varphi\prec_{x_0} v_*$ for some $x_0\in\mathcal{O}$ and
$$\ginfdel\varphi(x_0)+ q(x_0)\cdot\grad\varphi(x_0) \abs{\grad \varphi(x_0)}^{2-\gamma}+
c(x)\varphi^{3-\gamma}(x_0)\,>\, h(x_0).$$
Using continuity we can find a ball $\sB(x_0)$ around $x_0$ satisfying 
\begin{equation}\label{ET2.3B}
\ginfdel\varphi(x)+ q(x)\cdot\grad\varphi(x)\abs{\grad \varphi(x)}^{2-\gamma} +c(x)\varphi^{3-\gamma}(x)\,> h(x)\quad\quad\textrm{in $\overline{\sB(x_0)}$}.
\end{equation}
Now, for every $\varepsilon>0$ we can find a pair $(\psi_\varepsilon, x_\varepsilon)\in \cA\times\sB(x_0)$ satisfying
$$\psi_\varepsilon(x_\varepsilon)- \varphi(x_\varepsilon)=\inf_{\sB(x_0)} (\psi_\varepsilon-\varphi)<\varepsilon.$$
Note that $x_\varepsilon\to x_0$ as $\varepsilon\to 0$. Also, $\varphi + \psi_\varepsilon(x_\varepsilon)- \varphi(x_\varepsilon)$ touches $\psi_\varepsilon$ at $x_\varepsilon$ from below.
Thus by the definition of super-solution we must have 
$$\ginfdel \varphi(x_\varepsilon) +q(x_\varepsilon)\cdot\grad\varphi(x_\varepsilon)
\abs{\grad \varphi(x_\varepsilon)}^{2-\gamma} + 
c(x_\varepsilon)\psi^{3-\gamma}_{\varepsilon}(x_\varepsilon)\,\leq h(x_\varepsilon),$$
and letting $\varepsilon\to 0$, we obtain a contradiction to \eqref{ET2.3B}. To complete the claim it remains to show that $v_*\geq g$ on $\partial \mathcal{O}$. This follows from the fact that 
$\psi\geq \underline{u}$ for all $\psi\in\cA$, by Theorem~\ref{T2.1}. Thus, $v\geq \underline{u}$ and
$v_*\geq g$ on $\partial \mathcal{O}$.

It also standard to show that $v$ is a sub-solution in $\mathcal{O}$. For instance, we can follow the arguments in \cite[Theorem~1]{LW2008}. To complete the
proof we must check that $\lim_{x\in \mathcal{O}\to z} v(x)=g(z)$ for all $z\in\mathcal{O}$. Pick $z\in\partial \mathcal{O}$. We consider a continuous extension of $g$ to $\Rd$.
For a given $\varepsilon>0$ we choose
$r>0$ such that $|g(x)-g(z)|\leq \varepsilon $ for $x\in \sB_r(z)$.
We construct a barrier now. Fix any $r_1\in (0, r\wedge 1)$ and define the function
$\chi(x)= (\abs{x}^\alpha-r_1^{\alpha})$ for $\alpha\in (0, 1)$. A direct calculation yields
\begin{align*}
\ginfdel \chi(x) + \norm{q}_{L^\infty(\mathcal{O})} \abs{\grad\chi(x)}^{3-\gamma}
=\alpha^{3-\gamma} \abs{x}^{(3-\gamma)\alpha-4+\gamma} (\alpha-1) + 
\norm{q}_{L^\infty(\mathcal{O})} \alpha^{3-\gamma} \abs{x}^{(3-\gamma)(\alpha-1)} 
\end{align*}
for $|x|>r_1$. Since 
$$(3-\gamma)\alpha-4+\gamma=3\alpha-4 - \gamma(\alpha-1)< (3\alpha-3)
- \gamma(\alpha-1)= (3-\gamma)(\alpha-1)<0,$$
 we can choose $r_1, \delta>0$ small so that for $r_1\leq |x|\leq r_1+\delta$ we have
$$\ginfdel (\kappa\chi(x)) + \norm{q}_{L^\infty(\mathcal{O})} \abs{\kappa\grad\chi(x)}^{3-\gamma}+c(x) \kappa\chi(x)\leq -\norm{h}_{L^\infty(\mathcal{O})},$$
for some large $\kappa$. If required, we can rotate the domain such that $\sB_{r_1}(0)$ touches $\mathcal{O}$ from outside at $z$.
Let $w= \min\{g(z) + \varepsilon + \kappa \chi, \bar u\}$. Then this is also a super-solution, and thus $v\leq w$ in $\mathcal{O}$ which gives
us $\lim_{x\to z} v(x)\leq w(z)\leq g(z) +\varepsilon$. The arbitrariness of $\varepsilon$ confirms the proof.
\end{proof}

We end this section with three Liouville type results. Recall that the original Liouville property of infinity Laplacian stated any  super-solutions
$u$ of $-\infdel u=0$ in $\Rd$, which are bounded below are necessarily  constant \cite{CEG,TB01,BD01,Lindqvist}. We extend this result in Theorem~\ref{T2.4} and ~\ref{T2.5} below.
The proofs of these results are intrinsically based on  Theorem~\ref{T2.1}.
\begin{theorem}[Liouville property I]\label{T2.4}
If $u$ is a locally Lipschitz viscosity solution to $\ginfdel u -c \abs{\grad u}^{4-\gamma}\leq 0$ in $\Rd$, where $c\geq 0$ and $\inf_{\Rd}  u>-\infty$, then $u$ is necessarily a constant.
\end{theorem}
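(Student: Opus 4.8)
The plan is to argue by contradiction: if $u$ is non-constant, bounded below, and satisfies $\ginfdel u - c\abs{\grad u}^{4-\gamma}\le 0$ in $\Rd$, I will exhibit a point where $u$ dips strictly below a carefully chosen radial supersolution, contradicting the comparison principle of Theorem~\ref{T2.1}. First I would normalise: replace $u$ by $u - \inf_{\Rd}u \ge 0$, so $u$ is a nonnegative supersolution of $\ginfdel u - c\abs{\grad u}^{4-\gamma}\le 0$; note that $m:=\inf_{\Rd}u=0$ in the shifted problem, and since $u$ is non-constant there is some point $x_1$ with $u(x_1)>0$, while $\inf u=0$ forces a sequence $z_n$ with $u(z_n)\to 0$.

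The heart of the argument is the construction of a radial comparison function on large annuli. On a ball $\sB_R(0)$ (centred, after translation, at a point where $u$ is small, or simply centred at the origin with $R\to\infty$), consider $\Phi_R(x) = A\bigl(1 - \psi(\abs{x}/R)\bigr)$ or more concretely a function of the form $\Phi_R(x)= A\,\phi(\abs{x})$ with $\phi$ chosen so that $\Phi_R$ is a \emph{subsolution} of the equation that lies below $u$ on the boundary sphere $\partial\sB_R$; then Theorem~\ref{T2.1} (with $c(x)\equiv 0$, $F$ any fixed strictly increasing function, $h_1=h_2=0$, applying case (a) after a routine perturbation, or case (b) with a vanishing perturbation) would give $\Phi_R \le u$ throughout $\sB_R$, and letting the free constant $A$ grow with $R$ would force $u$ to be unbounded or to have a forced positive lower bound near the origin — contradicting $\inf u = 0$. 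Concretely I expect $\phi$ to be built from the same exponential profile $e^{-\alpha t}$ used in the proof of Theorem~\ref{T2.2}, or from a power $t^{\sigma}$ with $\sigma$ tuned against the homogeneities $3-\gamma$ and $4-\gamma$: the key computation is that for such radial profiles $\ginfdel \Phi = \phi''\abs{\phi'}^{2-\gamma}$ and $\abs{\grad\Phi}^{4-\gamma}=\abs{\phi'}^{4-\gamma}$, so the inequality $\ginfdel\Phi \ge c\abs{\grad\Phi}^{4-\gamma}$ reduces to an ODE inequality $\phi''\ge c\abs{\phi'}^{2}$ in the radial variable, which is readily solved (e.g.\ $\phi'(t)$ blowing up like $1/(c(T-t))$, giving a convex profile) on any finite interval.

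The main obstacle, and the step requiring the most care, is the degeneracy of $\ginfdel$ where $\grad\Phi=0$: the radial barriers have a critical point at the origin, so I must either keep the origin outside the region where the comparison is applied (work on an annulus $\sB_R\setminus\sB_\rho$ and handle $\sB_\rho$ separately using that $u\ge 0=\Phi$ there after a vertical shift) or invoke clause (iii) of the definition of viscosity solution in the $\gamma=2$ case. A second technical point is that Theorem~\ref{T2.1} is stated with a potential term $c(x)F(u)$ and strict monotonicity of $F$; since our equation has no zeroth-order term I would apply it with $c\equiv 0$ and, to land in hypothesis (a) or (b), add a small constant $\eps$ to one side and let $\eps\to 0$, or directly note that the proof of Theorem~\ref{T2.1} goes through verbatim when $c\equiv 0$ and $h_1\ge h_2$. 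Finally, the rescaling/limit step — choosing $A=A_R\to\infty$ appropriately as $R\to\infty$ so that the forced lower bound for $u$ at a fixed interior point tends to infinity — must be done so that $\Phi_R\le u$ on $\partial\sB_R$ is maintained; this is where boundedness below (rather than boundedness) of $u$ is exactly what is needed, since only $\inf u$, not $\sup u$, enters the boundary comparison.
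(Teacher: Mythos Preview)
Your overall strategy---radial barrier plus comparison on an exhausting family of domains---matches the paper's, but the way you orient the argument is backwards and creates a genuine gap. You center at (or near) a minimizing sequence and hope to push a subsolution $\Phi_R$ up so that $\Phi_R(0)\to\infty$ as $R\to\infty$, forcing $u(0)=\infty$. This cannot work: a constant function is a legitimate supersolution and would be ruled out by the same argument. Concretely, any radial subsolution of $\ginfdel\Phi - c\abs{\grad\Phi}^{4-\gamma}\ge 0$ that is large at the center either fails the \emph{strict} subsolution test at the center (so Theorem~\ref{T2.1} does not apply on the full ball), or, once you excise $\sB_\rho$, requires $\Phi_R\le u$ on the \emph{inner} boundary $\partial\sB_\rho$---precisely where $\Phi_R$ is near its maximum. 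You cannot have $\Phi_R(\rho)$ large and simultaneously $\Phi_R(\rho)\le u$ unless $u$ is already large there. Your suggestion that one can ``handle $\sB_\rho$ separately using that $u\ge 0=\Phi$ there'' is exactly wrong: near the center the barrier is at its peak, not at zero.

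The paper reverses the roles. After normalizing $\inf u=\tfrac12$, it picks $x_0$ with $u(x_0)\in(\tfrac12,1)$, sets $m_\varepsilon:=\min_{\sB_\varepsilon(x_0)}u\in(\tfrac12,1)$, and uses the barrier $m_\varepsilon\theta_\varepsilon$ with $\theta_\varepsilon(x)=(\abs{x-x_0}/\varepsilon)^\alpha$, $\alpha\in(-1,0)$. This is a strict subsolution on $\sB_\varepsilon^c$, equals $m_\varepsilon\le u$ on the inner sphere $\partial\sB_\varepsilon$, and decays to $0<\tfrac12\le u$ at infinity. Comparison on the annulus $\sB_R\setminus\sB_\varepsilon$ followed by $R\to\infty$ gives $u\ge m_\varepsilon\theta_\varepsilon$ on $\sB_\varepsilon^c$; then the second limit $\alpha\to 0^-$ (so $\theta_\varepsilon\to 1$ pointwise) yields $u\ge m_\varepsilon>\tfrac12=\inf u$ everywhere---the contradiction is $\inf u>\inf u$, not $u(0)=\infty$. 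The nontrivial information enters through the inner boundary, and the double limit $R\to\infty$, $\alpha\to 0^-$ is what flattens the barrier to a constant. Your log-profile $\phi$ with $\phi''=c(\phi')^2$ could be salvaged in this same orientation (fix $\phi(\rho)=m_\varepsilon$, $\phi(R)=0$, let $R\to\infty$ so the profile flattens to the constant $m_\varepsilon$), but that is not the mechanism you described.
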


\begin{proof}
Let $\theta_\varepsilon(x) = \frac{|x|^\alpha}{\varepsilon^\alpha}$ for $ \alpha\in (-1, 0)$. Then a direct calculation shows that for $\abs{x}>0$
$$\infdel \theta_\varepsilon -\abs{\grad \theta_\varepsilon}^4 = \alpha^3 \varepsilon^{-3\alpha} \abs{x}^{3\alpha-4} \left(\alpha-1 + \varepsilon^{-\alpha}\abs{x}^{\alpha}\right).$$
Thus for $\abs{x}\geq \varepsilon$ we have $\infdel \theta_\varepsilon -\abs{\grad \theta_\varepsilon}^4>0$, since $\alpha<0$. Hence,  we have 
$$\infdel \theta_\varepsilon -\abs{\grad \theta_\varepsilon}^4\,>0
\quad \text{in}\; \sB^c_\varepsilon(0).$$
It is easily seen that
$$\infdel u -c \abs{\grad u}^{4}\,\leq\, 0.$$
If $c=0$, then it becomes the standard Liouville property for super-harmonic functions. So we assume that $c>0$.
Without loss of generality we may assume that $c=1$. Otherwise, replace $u$ by $cu$. Also, by translating $u$ we may
assume that $\inf u=\frac{1}{2}$. We need to show that $u\equiv\frac{1}{2}$. 
Suppose, on the contrary, that there exists a point $x_0$ satisfying $\frac{1}{2}<u(x_0)<1$. 
With no loss of generality we may assume $x_0=0$. Let $m_\varepsilon=\min_{\sB_\varepsilon} u$. It is evident that for all $\varepsilon$ small we have
$\frac{1}{2}<m_\varepsilon<1$. For some large $R$ consider the domain $\mathcal{O}_R=\sB_R\setminus \overline{\sB_\varepsilon}$. Since $\theta_\varepsilon(x)\to 0$
as $\abs{x}\to \infty$, it follows that $u\geq m_\varepsilon \theta_\varepsilon$ on $\partial\mathcal{O}_R$. Applying Theorem~\ref{T2.1} we find that
$u\geq m_\varepsilon \theta_\varepsilon$ in $\mathcal{O}_R$ (note that value of $\gamma$ is not important in Theorem~\ref{T2.1} if the solutions are locally Lipschitz and the same proof works since $\theta_\varepsilon$ is
a strict subsolution). Now letting $R\to\infty$ we obtain that $u\geq m_\varepsilon \theta_\varepsilon$ in $\sB^c_\varepsilon$ for all $\alpha<0$. Let $\alpha\to 0$ to find that
$u(x)\geq m_\varepsilon$ for any $|x|\geq \varepsilon$. This of course, implies that $\inf_{\Rd} u \geq m_\varepsilon>\frac{1}{2}$ which is a contradiction. 
Hence, the proof is complete.
\end{proof}

Our next Liouville result includes $q$ with certain decay rate at infinity. This  function
should be seen as the \textit{small perturbation} to the infinity Laplacian.
\begin{theorem}[Liouville property II]\label{T2.5}
Suppose that 
$$\limsup_{\abs{x}\to\infty}\, (q(x)\cdot x)_+<1.$$
Then every  solution $u$ to $\ginfdel u + 
q(x)\cdot \grad u(x) \abs{\grad u(x)}^{2-\gamma}\leq 0$ in $\Rd$, with $\inf_{\Rd} u>-\infty$, is necessarily  a constant.
\end{theorem}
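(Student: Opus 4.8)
The plan is to adapt the proof of Theorem~\ref{T2.4}, replacing the absorption barrier used there by a radial power barrier adapted to the gradient term. The essential difference is that our hypothesis controls $q(x)\cdot x$ only for $\abs{x}$ large, so --- unlike the constant-coefficient situation of Theorem~\ref{T2.4} --- we cannot recenter the origin; consequently the comparison annuli will have a \emph{fixed} inner radius $\rho_0$, and the barrier will carry an extra downward shift. First I record the barrier computation: for $\alpha\in(-1,0)$ set $\theta(x)=\abs{x}^{\alpha}$. Since $\ginfdel$ acts on a radial profile $g(r)$ as $\abs{g'}^{2-\gamma}g''$, a short computation gives, for $x\neq 0$,
$$\ginfdel\theta(x)+q(x)\cdot\grad\theta(x)\,\abs{\grad\theta(x)}^{2-\gamma}
=\abs{\alpha}^{3-\gamma}\,\abs{x}^{(3-\gamma)\alpha-(4-\gamma)}\,\bigl(1-\alpha-q(x)\cdot x\bigr).$$
By the hypothesis there are $\rho_0>1$ and $\delta\in(0,1)$ with $q(x)\cdot x\le 1-\delta$ whenever $\abs{x}\ge\rho_0$, and since $1-\alpha>1>1-\delta$ the last factor is bounded below by $\delta-\alpha>0$ on $\sB^{c}_{\rho_0}$. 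Because $\ginfdel$ is $(3-\gamma)$-homogeneous and the gradient term scales the same way, it follows that $a\theta+b$ is a smooth strict subsolution of $\ginfdel w+q(x)\cdot\grad w\,\abs{\grad w}^{2-\gamma}=0$ in $\sB^{c}_{\rho_0}$ for every $a>0$ and $b\in\R$.

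Next I reduce to the case $u>\inf u$ everywhere. Put $m=\inf_{\Rd}u>-\infty$. Since the operator is unchanged upon adding a constant to its argument, $v=u-m\ge 0$ is a supersolution of $\ginfdel v+q(x)\cdot\grad v\,\abs{\grad v}^{2-\gamma}=0$; and because $-c\,v^{3-\gamma}\ge 0$ for $c<0$, it is also a supersolution of \eqref{ET2.2A} for any such $c$. If $v$ vanishes at some point, then Theorem~\ref{T2.2} applied on large balls forces $v\equiv 0$, i.e.\ $u\equiv m$, and we are done. So assume $u>m$ on $\Rd$; then $\mu:=\min_{\overline{\sB_{\rho_0}}}u-m>0$.

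Now fix $\alpha\in(-1,0)$ and $\eta>0$, and set $\psi(x)=(m-\eta)+(\mu+\eta)\bigl(\abs{x}/\rho_0\bigr)^{\alpha}$. On $\partial\sB_{\rho_0}$ one has $\psi=m+\mu\le u$, while $\psi(x)\to m-\eta<m\le u$ as $\abs{x}\to\infty$, so $\psi\le u$ on $\partial\sB_R$ for all $R$ large. By the first paragraph, $\psi$ is a strict subsolution on the bounded annulus $\sB_R\setminus\overline{\sB_{\rho_0}}$, so Theorem~\ref{T2.1} --- whose proof is insensitive to the value of $\gamma$ here because $\psi$ is smooth, exactly as in the parenthetical remark in the proof of Theorem~\ref{T2.4} --- yields $u\ge\psi$ on $\sB_R\setminus\overline{\sB_{\rho_0}}$. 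Letting $R\to\infty$ and then $\alpha\to 0^{-}$, so that $(\abs{x}/\rho_0)^{\alpha}\to 1$ for each fixed $x$, we obtain $u(x)\ge m+\mu$ for every $\abs{x}>\rho_0$; together with $u\ge m+\mu$ on $\overline{\sB_{\rho_0}}$ this gives $\inf_{\Rd}u\ge m+\mu>m$, a contradiction. Hence $u$ is constant.

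The main obstacle is the loss of translation invariance: the barrier is a strict subsolution only on $\sB^{c}_{\rho_0}$, the region where $q(x)\cdot x$ is controlled, so the origin cannot be moved to a point where $u$ is close to its infimum the way it was in Theorem~\ref{T2.4}. This is why the strong maximum principle is invoked --- to secure the uniform positive lower bound $\mu$ on the fixed ball $\overline{\sB_{\rho_0}}$, independently of where the infimum of $u$ is approached --- and why the barrier is shifted down by $\eta$, so that it drops below $\inf_{\Rd}u$ at infinity and the outer-boundary inequality on $\partial\sB_R$ holds for free. The remaining points are routine: $q$ is continuous, hence bounded and uniformly continuous on each compact annulus, so Theorem~\ref{T2.1} applies there, and the limit $\alpha\to 0$ is taken pointwise.
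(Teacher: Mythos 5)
Your proposal is correct and takes essentially the same route as the paper: both build the radial barrier $\abs{x}^{\alpha}$ with $\alpha\in(-1,0)$, verify it is a strict subsolution outside a fixed ball using $(q(x)\cdot x)_{+}<1$, apply the comparison principle on expanding annuli and let $\alpha\to0^{-}$ to show $\inf u$ is attained on a compact set, and then conclude via the strong maximum principle (Theorem~\ref{T2.2}). The only cosmetic differences are that the paper lifts $u$ so that $u\ge1$ to handle the outer boundary, whereas you shift the barrier down by $\eta$, and the paper proves the claim directly while you organize the same steps as a contradiction argument; these are interchangeable.
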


\begin{proof}
As earlier, we can write
$$ \infdel u + q(x)\cdot\grad u(x) \abs{\grad u(x)}^2\leq \, 0\quad \text{in}\; \Rd.$$
We claim that there exists a compact set $K$ such that 
\begin{equation}\label{ET2.5A}
\inf_{\Rd} u =\min_{K} u\,.
\end{equation}
Let us first show that  the claim \eqref{ET2.5A} implies $u$ to be a constant. By \eqref{ET2.5A}, the function $v(x)=u(x)-\min_{K} u$ is a non-negative solution to
$$ \infdel v + q(x)\cdot\grad v(x) \abs{\grad v(x)}^2\leq \, 0\quad \text{in}\; \Rd,$$
and $v$ vanishes somewhere in $K$. Applying Theorem~\ref{T2.2} we obtain that $v\equiv 0$ and therefore, $u$ is a constant.

Now we prove the claim \eqref{ET2.5A}. By translating $u$ we may assume that $u\geq 1$ in $\Rd$.
Let $K$ (containing $0$ in the interior) be such that $(q(x)\cdot x)_+<1$ for $x\in K^c$. Let $\theta_\alpha(x) =\abs{x}^\alpha$ for $\alpha\in (-1, 0)$. A
routine calculation reveals that for $x\in K^c$
\begin{align*}
\infdel\theta + q(x)\cdot\grad\theta(x) \abs{\grad\theta(x)}^2 &= \alpha^3 (\alpha-1) \abs{x}^{3\alpha-4} + \alpha^3 q(x)\cdot x \abs{x}^{3\alpha-4}
\\
&= \alpha^3 \abs{x}^{3\alpha-4} \left((\alpha-1) + q(x)\cdot x \right)
\\
&= \alpha^3 \abs{x}^{3\alpha-4} \left(\alpha + \left((q(x)\cdot x)_+ -1\right) - (q(x)\cdot x)_- \right) >0\,.
\end{align*}
Thus, for all large $R$, we have $u\geq [\min_{K} u]\delta_\alpha\,\theta_\alpha$ in $\sB_R\setminus K$, by Theorem~\ref{T2.1}, where $\delta_\alpha=[\max_{\partial K}\theta_\alpha]^{-1}$. Letting $R\to\infty$ we get 
$u(x)\geq [\min_{K} u]\delta_\alpha\,\theta_\alpha(x)$ for $x\in K^c$ and $\alpha<0$. Now let $\alpha\to 0$ to conclude \eqref{ET2.5A}.
\end{proof}

\begin{remark}
The decay of $q$ in the above Liouville property seems optimal. For example, take $q(x)=\frac{4x}{1+x^2}$
and let $u(x)=\frac{1}{1+x^2}$ for $x\in\R$. Then a straightforward calculation reveals
$$\infdel u + q(x) (u^\prime)^3= -\frac{8x^2}{(1+x^2)^6}\leq 0\quad \text{in}\; \Rd.$$
Note that $\lim_{|x|\to\infty} (q(x)\cdot x)_+=4$.
\end{remark}

The next Liouville result generalizes \cite[Theorem~4.4]{ALT}. 
\begin{theorem}[Liouville III]\label{T2.6}
Suppose that $\Lyap:\Rd\to[0, \infty)$ is a locally Lipschitz function with $\liminf_{\abs{x}\to\infty}\Lyap(x)>0$ and satisfies
\begin{equation}\label{ET2.6A}
\ginfdel \Lyap + q(x)\cdot \grad \Lyap(x) \abs{\grad \Lyap }^{2-\gamma} + c(x) \Lyap^\beta(x)\leq 0\quad \text{in}\; \Rd\,,
\end{equation}
where $c<0$ is a continuous function and $\beta\in (0, 3-\gamma]$. Let $u\in\cC(\Rd)$ be a function satisfying 
\begin{equation}\label{ET2.6B}
\ginfdel u(x) + q(x)\cdot \grad u(x) \abs{\grad u }^{2-\gamma} + c(x) (u_+(x))^\beta\,\geq 0\quad \text{in}\; \Rd\,.
\end{equation}
If we have
\begin{equation}\label{ET2.6C}
\lim_{\abs{x}\to \infty} \, \frac{u_+(x)}{\Lyap(x)}\,=\,0\,,
\end{equation}
then $u\leq  0$ (in particular, if $u\geq 0$, then $u\equiv 0$).
In addition, if we also assume that
$$\limsup_{\abs{x}\to\infty}\, (q(x)\cdot x)_+<1,$$
then $u$ is a constant.
\end{theorem}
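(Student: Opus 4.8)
The plan is to show that $u\le\eps\Lyap$ in $\Rd$ for every $\eps\in(0,1)$ and then let $\eps\to0$. Fix such an $\eps$. The first step is to verify that $\eps\Lyap$ is a viscosity supersolution of
\[
\ginfdel w + q(x)\cdot\grad w\,\abs{\grad w}^{2-\gamma} + c(x)(w_+)^\beta\,\le\,0\qquad\text{in }\Rd.
\]
Since $\ginfdel$ and $p\mapsto q(x)\cdot p\,\abs{p}^{2-\gamma}$ are positively $(3-\gamma)$-homogeneous, a $\cC^2$ function $\phi$ touching $\eps\Lyap$ from below at $x_0$ gives rise to $\eps^{-1}\phi$ touching $\Lyap$ from below at $x_0$; feeding this into \eqref{ET2.6A}, multiplying through by $\eps^{3-\gamma}$ and using $\phi(x_0)=\eps\Lyap(x_0)\ge0$, one arrives at
\[
\ginfdel\phi(x_0)+q(x_0)\cdot\grad\phi(x_0)\,\abs{\grad\phi(x_0)}^{2-\gamma}+\eps^{3-\gamma-\beta}\,c(x_0)\,\phi^\beta(x_0)\,\le\,0.
\]
Because $\beta\le3-\gamma$ and $\eps<1$ we have $\eps^{3-\gamma-\beta}\le1$, while $c(x_0)\phi^\beta(x_0)\le0$, so $c(x_0)\phi^\beta(x_0)\le\eps^{3-\gamma-\beta}c(x_0)\phi^\beta(x_0)$ and the claimed inequality follows (the subcase $\gamma=2$, $\grad\phi(x_0)=0$ is handled identically, with $\ginfdel\phi(x_0)$ replaced by $m(D^2\phi(x_0))$).

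Next I would localize. Set $D_\eps:=\{x\in\Rd:\ u(x)>\eps\Lyap(x)\}$, which is open. By \eqref{ET2.6C} together with $\liminf_{\abs{x}\to\infty}\Lyap(x)>0$ there is $R>0$ with $u(x)\le u_+(x)<\eps\Lyap(x)$ whenever $\abs{x}\ge R$; hence $D_\eps\Subset\sB_R$, so $\overline{D_\eps}$ is compact and, by continuity, $u=\eps\Lyap$ on $\partial D_\eps$. On $D_\eps$ one has $u>\eps\Lyap\ge0$, hence $u>0$ and $(u_+)^\beta=u^\beta$ there. Now pick any continuous, strictly increasing $F\colon\R\to\R$ with $F(s)=s^\beta$ for $s\ge0$. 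On the bounded domain $D_\eps$, where $u$ and $\eps\Lyap$ are both nonnegative, $u$ is a subsolution and $\eps\Lyap$ a supersolution of $\ginfdel w+q(x)\cdot\grad w\,\abs{\grad w}^{2-\gamma}+c(x)F(w)=0$, the potential $c$ is strictly negative on the compact set $\overline{D_\eps}$, and $u=\eps\Lyap$ on $\partial D_\eps$; Theorem~\ref{T2.1}(a) therefore gives $u\le\eps\Lyap$ on $\overline{D_\eps}$. This contradicts the definition of $D_\eps$ unless $D_\eps=\emptyset$, so $u\le\eps\Lyap$ in $\Rd$. Letting $\eps\to0^+$ yields $u\le0$ in $\Rd$, and in particular $u\equiv0$ if $u\ge0$.

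For the final assertion, suppose additionally $\limsup_{\abs{x}\to\infty}(q(x)\cdot x)_+<1$. Since $u\le0$ we have $u_+\equiv0$, so \eqref{ET2.6B} reduces to the statement that $u$ is a viscosity subsolution of $\ginfdel u+q(x)\cdot\grad u\,\abs{\grad u}^{2-\gamma}\ge0$ in $\Rd$. As $\ginfdel$ and $p\mapsto q(x)\cdot p\,\abs{p}^{2-\gamma}$ are odd in the gradient argument, $w:=-u\ge0$ is then a viscosity supersolution of $\ginfdel w+q(x)\cdot\grad w\,\abs{\grad w}^{2-\gamma}\le0$ in $\Rd$ with $\inf_{\Rd}w=-\sup_{\Rd}u\ge0>-\infty$, so Theorem~\ref{T2.5} applies and forces $w$, hence $u$, to be constant.

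The step I expect to be the main obstacle is the first one: carrying out the homogeneity/scaling argument for the degenerate, non-divergence operator within the viscosity framework while simultaneously exploiting $c<0$ and $\beta\le3-\gamma$ to absorb the mismatch between the scalings $\eps^{3-\gamma}$ and $\eps^\beta$, so that $\eps\Lyap$ genuinely becomes a supersolution. A closely related technical point is that the absorption nonlinearity $s\mapsto(s_+)^\beta$ fails to be strictly increasing, which blocks a direct global appeal to Theorem~\ref{T2.1}; this is circumvented by running the comparison only on $D_\eps$, where $u$ is strictly positive and the nonlinearity coincides with the strictly increasing $F$. Checking that $D_\eps$ is bounded — exactly where the growth hypothesis \eqref{ET2.6C} and $\liminf\Lyap>0$ are used — is the prerequisite that makes Theorem~\ref{T2.1} applicable on a bounded domain.
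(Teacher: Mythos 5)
Your proof is correct and follows essentially the same strategy as the paper: scale $\Lyap$ to $\eps\Lyap$ (and use $\beta\le 3-\gamma$, $c<0$, and $(3-\gamma)$-homogeneity to keep it a supersolution), use the growth hypothesis~\eqref{ET2.6C} together with $\liminf\Lyap>0$ to confine the comparison to a bounded region, invoke Theorem~\ref{T2.1}, send $\eps\to0$, and finally reduce the constancy assertion to Theorem~\ref{T2.5} via $w=-u$. The one point where you are more careful than the paper is worth noting: you localize to $D_\eps=\{u>\eps\Lyap\}$ and replace $s\mapsto(s_+)^\beta$ by a globally strictly increasing $F$ agreeing with $s^\beta$ on $[0,\infty)$, which removes the need to re-examine the proof of Theorem~\ref{T2.1} to see that the non-strict-monotonicity of $(s_+)^\beta$ on $(-\infty,0]$ is harmless; the paper instead runs the comparison on $\sB_R$ and appeals to ``the proof of Theorem~\ref{T2.1}'' to paper over this.
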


\begin{proof}
For $\kappa\in (0, 1]$, we define $\Lyap_\kappa(x)=\kappa\Lyap$. Since $c<0$ and $\beta\in (0, 3-\gamma]$ it follows from \eqref{ET2.6A} that
\begin{equation}\label{ET2.6D}
\ginfdel \Lyap_\kappa + q(x)\cdot \grad \Lyap_\kappa(x) \abs{\grad \Lyap_\kappa }^{2-\gamma} + c(x) \Lyap^\beta_\kappa(x)\leq 0\quad \text{in}\; \Rd\,.
\end{equation}
Now using \eqref{ET2.6C} we can find $R_0>0$ such that $u\leq \Lyap_k$ on $\sB_R$ for every $R\geq R_0$. Then, by the proof of Theorem~\ref{T2.1}, and \eqref{ET2.6B} and \eqref{ET2.6D}, it follows that 
$u\leq \Lyap_k$ in $\sB_R$ for all $R\geq R_0$. Letting $R\to\infty$, we obtain $u\leq \kappa \Lyap$ for all $\kappa\in(0, 1]$. The first part  follows by letting $\kappa\to 0$.

For the second part, we observe that $\tilde{u}= -\min\{u, 0\}$ satisfies
$$\ginfdel \tilde{u} +  q(x)\cdot \grad\tilde{u}(x) \abs{\grad\tilde{u}}^{2-\gamma}\,\leq\, 0\quad \text{in}\; \Rd.$$
Therefore, the result follows from Theorem~\ref{T2.5}.
\end{proof}

We remark that the case $\gamma=0$ and $q\equiv 0$ corresponds to \cite[Theorem 4.4]{ALT}. Below we give a family of operators satisfying \eqref{ET2.6A}. 
\begin{example}\label{E2.1}
Let $\gamma\in [0, 2]$ and $\beta \in (0, 3-\gamma]$. Fix $\alpha=\frac{4-\gamma}{3-\gamma-\beta}>1$ and $\Lyap(x)=\abs{x}^\alpha$. From the computations in the proof of Theorem~\ref{T2.5}, it follows that
\begin{align*}
&\ginfdel\Lyap(x) + q(x)\cdot\grad\Lyap(x) \abs{\grad\Lyap(x)}^{2-\gamma} 
\\
&= \alpha^{3-\gamma} (\alpha-1) \abs{x}^{3\alpha-4-\gamma(\alpha-1)} + \alpha^{3-\gamma} q(x)\cdot x \abs{x}^{3\alpha-4-\gamma(\alpha-1)}
\\
&\le\, |x|^{\alpha\beta} \alpha^{3-\gamma}((\alpha-1) + (q(x)\cdot x)_+).
\end{align*}
Letting $-c(x)\geq \alpha^{3-\gamma}((\alpha-1) + (q(x)\cdot x)_+)$ it follows from above that
$$ \ginfdel\Lyap(x) + q(x)\cdot\grad\Lyap(x) \abs{\grad\Lyap(x)}^{2-\gamma} + c(x) \Lyap^\beta(x)\leq 0\quad \text{in}\; \Rd.$$
Since $\grad\Lyap(0)=0$, it is straightforward to check that $\Lyap$ is a solution to the above equation for $\gamma\in [0, 2)$. For $\gamma=2$, we note that $\alpha>2$, and therefore, for any
$\varphi\prec_0\Lyap$ we have $D^2\varphi(0)\leq 0$. Thus $m(D^2\varphi(0))\leq 0$, implying
$\Lyap$ to be a solution to the above inequality.
\end{example}

\section{Dirichlet principal eigenvalue problem}\label{S-eigen}

Let $\mathcal{O}$ be a bounded $\cC^1$ domain. The goal of this section is to prove the existence of a principal eigenfunction and other related properties.
The functions $q, c$ are assumed to be continuous in $\bar{\mathcal{O}}$. We denote by $\sL$ the operator
$$\sL \varphi(x) \,=\,  \ginfdel \varphi(x) + 
q(x)\cdot \grad\varphi(x) \abs{\grad\varphi(x)}^{2-\gamma}.$$
The principal eigenvalue of $\sL+c$ is defined as follows
$$\lambda_\mathcal{O}=\sup\{\lambda\in\R \; :\;\exists \; \varphi\in \cC(\bar{\mathcal{O}})\;
\mbox{satisfying}\;
 \sL\varphi + c(x)\varphi^{3-\gamma} + \lambda \varphi^{3-\gamma}
\leq0 \quad \text{in}\; \mathcal{O}, \; \text{and}\; \min_{\bar{\mathcal{O}}}\varphi>0\}.$$

\begin{remark}
It should be noted that the definition of principal eigenvalue is different from the one appeared in \cite{BNV}. In the above we consider
only those super-solutions that are positive in $\bar{\mathcal{O}}$ whereas the class of super-solutions considered in \cite{BNV} might vanish on $\partial\mathcal{O}$. The above
definition is similar to the one considered in \cite{BM13,PJ07}.
\end{remark}

It is clear that $\lambda_\mathcal{O}\geq -\norm{c}_\infty$. Lemma~\ref{L3.3} below shows that 
$\lambda_\mathcal{O}$ is finite.
Our goal is to prove existence of an eigenfunction associated with $\lambda_\mathcal{O}$. To do so, we need some intermediate results. The first one is about the boundary behavior.

\begin{lemma}\label{L3.1}
Let $h$ be bounded. Then for any
positive solution of $\sL u + c(x)u^{3-\gamma} = h$ that vanishes on $\partial\mathcal{O}$, we have for any $\alpha\in (0, 1)$, 
that
$$\abs{u(x)}\leq C \dist^\alpha (x, \partial\mathcal{O}),$$
for some constant $C$, depending on $\norm{u}_{L^\infty(\mathcal{O})}, \alpha$. In particular, if 
$\norm{u}_{L^\infty(\mathcal{O})}\leq 2$, then the constant $C$ can be chosen independent of $u$.
\end{lemma}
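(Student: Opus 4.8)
The plan is to build a barrier near the boundary and invoke the comparison principle, Theorem~\ref{T2.1}, to squeeze $u$ between $0$ and a multiple of $\dist^\alpha(x,\partial\mathcal{O})$. First I would reduce to a local statement: fix $z\in\partial\mathcal{O}$, and since $\mathcal{O}$ is $\cC^1$ (hence satisfies a uniform exterior cone/sphere-type condition after a small smoothing, or one works directly with $\dist$), it suffices to dominate $u$ on a fixed-size neighbourhood $\mathcal{O}\cap\sB_{r}(z)$. On the portion of $\partial(\mathcal{O}\cap\sB_r(z))$ lying inside $\mathcal{O}$ we have $u\leq\norm{u}_\infty$, while on $\partial\mathcal{O}$ we have $u=0$. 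The barrier is modelled on the one already used in the proof of Theorem~\ref{T2.3}: with $r_1\in(0,r\wedge1)$ and $\alpha\in(0,1)$, set $\chi(x)=\abs{x-x_1}^\alpha-r_1^\alpha$ where $x_1$ is the centre of an exterior ball touching $z$, and consider $\kappa\chi$ for a suitably large $\kappa$.

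The key computation, already carried out in Theorem~\ref{T2.3}, is that for $|x-x_1|>r_1$,
$$\ginfdel(\kappa\chi)(x) + \norm{q}_\infty\abs{\kappa\grad\chi(x)}^{3-\gamma}
= \kappa^{3-\gamma}\alpha^{3-\gamma}\abs{x-x_1}^{(3-\gamma)\alpha-4+\gamma}(\alpha-1)
+ \kappa^{3-\gamma}\norm{q}_\infty\alpha^{3-\gamma}\abs{x-x_1}^{(3-\gamma)(\alpha-1)},$$
and since $(3-\gamma)\alpha-4+\gamma<(3-\gamma)(\alpha-1)<0$, the first (negative) term dominates on a thin shell $r_1\leq|x-x_1|\leq r_1+\delta$ once $\delta$ is small; absorbing the zeroth-order term $c(x)(\kappa\chi)^{3-\gamma}$ (which is bounded on this shell) and the right-hand side $h$ (bounded by hypothesis), one gets, for $\kappa$ large depending only on $\norm{q}_\infty,\norm{c}_\infty,\norm{h}_\infty,\alpha,\delta$,
$$\ginfdel(\kappa\chi) + q(x)\cdot\grad(\kappa\chi)\abs{\grad(\kappa\chi)}^{2-\gamma} + c(x)(\kappa\chi)^{3-\gamma}\ \leq\ h(x)\qquad\text{in the shell}.$$
Choosing $\kappa$ additionally so large that $\kappa\chi\geq\norm{u}_\infty$ on the inner sphere $|x-x_1|=r_1+\delta$ (this forces $\kappa\geq\norm{u}_\infty/((r_1+\delta)^\alpha-r_1^\alpha)$), the function $\kappa\chi$ is a supersolution dominating $u$ on the whole boundary of the shell $\cap\,\mathcal{O}$; note $u\leq g=0$ on $\partial\mathcal{O}$ matches $\kappa\chi\geq0$ there. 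Theorem~\ref{T2.1} (case (a), using $c<0$ — or after the standard shift replacing $c$ by $c-\norm{c}_\infty-1<0$ and $h$ accordingly, as in Theorem~\ref{T2.2}) then yields $u\leq\kappa\chi$ throughout the shell, and since $\chi(x)=\abs{x-x_1}^\alpha-r_1^\alpha\leq C_\alpha\,\dist(x,\partial\mathcal{O})^\alpha$ near $z$, this gives the bound $u(x)\leq C\dist^\alpha(x,\partial\mathcal{O})$ with $C$ linear in $\kappa$, hence of the stated dependence on $\norm{u}_\infty$ and $\alpha$; when $\norm{u}_\infty\leq2$ the constant $\kappa$, and therefore $C$, can be chosen uniformly. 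A symmetric argument with $-\kappa\chi$ as a subsolution (using that $u>0$, so $u\geq0=-\kappa\chi$ is automatic on $\partial\mathcal{O}$ and one only needs $-\kappa\chi\leq u$ on the inner sphere, which is free) gives the matching lower control, so $\abs{u(x)}\leq C\dist^\alpha(x,\partial\mathcal{O})$.

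The main obstacle I anticipate is purely geometric bookkeeping rather than analytic: for a general $\cC^1$ domain one does not literally have exterior balls, so one must either quote that a $\cC^1$ domain satisfies a uniform exterior "paraboloid" condition compatible with the sub-power barrier $\abs{x-x_1}^\alpha$ (for $\alpha<1$ this is weak enough to be accommodated by $\cC^1$ regularity), or replace $\abs{x-x_1}^\alpha$ by a barrier built directly from a regularized distance function $d_\mathcal{O}\in\cC^2(\mathcal{O})$ with $|\grad d_\mathcal{O}|\asymp1$ near $\partial\mathcal{O}$ and run the same computation on $\{0<d_\mathcal{O}<\delta\}$. Either way the homogeneity exponents are exactly those displayed above, so the inequality closes; the one point demanding a little care is checking the degenerate branch $\grad\chi=0$ (only relevant if the barrier's gradient vanishes, which it does not on the shell $r_1\leq|x-x_1|$ since $\alpha>0$), so condition (iii) in the definition of viscosity solution never enters and the value $\gamma=2$ requires no separate treatment here.
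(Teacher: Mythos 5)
Your proposal follows the paper's own proof nearly verbatim: both construct the barrier $\chi(x)=|x-x_1|^\alpha - r_1^\alpha$ centred at an exterior ball touching $\partial\mathcal{O}$ at $z$, exploit the exponent inequality $(3-\gamma)\alpha-4+\gamma < (3-\gamma)(\alpha-1)<0$ so the negative term dominates on a thin shell $r_1\leq|x-x_1|\leq r_1+\delta$, shift $c$ to a strictly negative potential (the paper uses $c-k\|c\|$ with $k\geq\|u\|_\infty$, you use $c-\|c\|_\infty-1$; both equivalent) so Theorem~\ref{T2.1} applies, and tune $\kappa$ so that $\kappa\chi$ dominates $\|u\|_\infty$ on the inner sphere. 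Your side remarks on the degenerate branch (irrelevant since $\grad\chi\neq 0$ on the shell, so item (iii) in the viscosity definition never activates) and on the fact that $\cC^1$ alone does not literally give a uniform exterior sphere radius are correct and actually slightly more careful than the paper, which simply posits an exterior sphere radius $r_0$ without comment.
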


\begin{proof}
Suppose $\norm{u}_{L^\infty(\mathcal{O})}\leq k\in (1, \infty)$. Then we note that 
$$\ginfdel u + q(x)\cdot \grad u(x) \abs{\grad u}^{2-\gamma} + 
(c(x)-k\norm{c}_{L^\infty(\mathcal{O})})u^{3-\gamma} \geq  h-k^{4-\gamma} 
\norm{c}_{L^\infty(\mathcal{O})}.$$
Let $r_0$ be the radius of exterior sphere of $\mathcal{O}$.  Pick any point $z\in\partial\mathcal{O}$. Fix any $r_1\in (0, r_0\wedge 1)$ and define the function
$\chi(x)= (\abs{x}^\alpha-r_1^{\alpha})$ for $\alpha\in (0, 1)$. A direct calculation yields
\begin{align*}
\ginfdel \chi(x) + \norm{q}_{L^\infty(\mathcal{O})} \abs{\grad\chi(x)}^{3-\gamma} =\alpha^{3-\gamma} \abs{x}^{(3-\gamma)\alpha-4+\gamma} (\alpha-1) + \norm{q}_{L^\infty(\mathcal{O})} \alpha^{3-\gamma} \abs{x}^{(3-\gamma)(\alpha-1)} 
\end{align*}
for $|x|>r_1$. Since 
$$(3-\gamma)\alpha-4+\gamma=3\alpha-4 - \gamma(\alpha-1)< (3\alpha-3)
- \gamma(\alpha-1)= (3-\gamma)(\alpha-1)<0,$$
we can choose $r_1, \delta>0$ small enough so that for $r_1\leq |x|\leq r_1+\delta$ we have
$$\ginfdel (\kappa\chi(x)) + \norm{q}_{L^\infty(\mathcal{O})} \abs{\kappa\grad\chi(x)}^{2-\gamma}
+ (c(x)-k\norm{c}_{L^\infty(\mathcal{O})}) (\kappa\chi)^{3-\gamma}(x)
\leq h-k^4 \norm{c}_{L^\infty(\mathcal{O})},$$
for some large $\kappa$. $\kappa$ can be chosen large enough to satisfy $\kappa\chi(x)\geq k$ for $|x|=r_1+\delta$. Let $\sB_{r_1}(z_0)$ be the ball that
touches $\mathcal{O}$ from outside at the point $z$. Define $\tilde\chi(x)=\kappa \chi(x-z_0)$ in $\sB_{r_1+\delta}(z_0)$. Then by comparison
principle we have $u\leq \tilde\chi$ in $\mathcal{O}\cap(\sB_{r_1+\delta}(z_0)\setminus\sB_{r_1+\delta}(z_0)).$ Since $z$ is arbitrary this proves the result for $u$.  This completes the proof.
\end{proof}

Next we prove a maximum principle.
\begin{lemma}\label{L3.2}
Suppose that $\lambda<\lambda_\mathcal{O}$. Then for any solution $\sL u(x) + c(x)u_+^{3-\gamma} + \lambda u_+^{3-\gamma} \geq 0$ in $\mathcal{O}$ with $u\leq 0$ on $\partial\mathcal{O}$
we must have $u\leq 0$ in $\mathcal{O}$.
\end{lemma}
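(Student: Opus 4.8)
The plan is a sliding (scaling) argument: I compare $u$ with scalings of the positive functions admissible in the definition of $\lambda_{\mathcal{O}}$ and push the scaling factor down using the comparison principle, Theorem~\ref{T2.1}. Since $\lambda<\lambda_{\mathcal{O}}$ is the supremum of those admissible $\hat\lambda$, I first fix an admissible $\hat\lambda\in(\lambda,\lambda_{\mathcal{O}}]$, i.e.\ a (continuous) function $\varphi$ with $\varphi_0:=\min_{\bar{\mathcal{O}}}\varphi>0$ that is a viscosity supersolution of $\sL\varphi+(c(x)+\hat\lambda)\varphi^{3-\gamma}=0$ in $\mathcal{O}$. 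To put the problem in the exact form required by Theorem~\ref{T2.1} — whose zeroth order coefficient must be nonpositive and whose nonlinearity must be strictly increasing — I replace $s\mapsto s^{3-\gamma}$ by $F(s):=\sgn(s)\abs{s}^{3-\gamma}$, which (as $3-\gamma\ge 1$) is continuous and strictly increasing on $\R$ and agrees with $s\mapsto s^{3-\gamma}$ on $[0,\infty)$, and I fix $K_0>0$ large enough that $c(x)+\lambda-K_0<0$ on $\bar{\mathcal{O}}$.

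Suppose, for contradiction, that $u\not\le 0$, and set $\tau:=\inf\{t>0:\ t\varphi\ge u\ \text{on}\ \bar{\mathcal{O}}\}$; this is finite ($u$ is bounded above and $\varphi\ge\varphi_0>0$) and strictly positive (otherwise $u\le 0$), and $u\le\tau\varphi$ on $\bar{\mathcal{O}}$. Two observations follow. First, $u$ is a viscosity subsolution of $\sL v+(c(x)+\lambda-K_0)F(v)=h_1$ in $\mathcal{O}$ with $h_1(x):=-K_0\,(\tau\varphi(x))^{3-\gamma}$: at a contact point $x_0$ where a $\cC^2$ function touches $u$ from above, one uses the hypothesis together with $-K_0u(x_0)^{3-\gamma}\ge-K_0(\tau\varphi(x_0))^{3-\gamma}$ when $u(x_0)>0$, and one uses that $\ginfdel(\cdot)+q\cdot\grad(\cdot)\abs{\grad(\cdot)}^{2-\gamma}$ is $\ge 0$ at $x_0$ while $(c(x_0)+\lambda-K_0)F(u(x_0))\ge 0\ge h_1(x_0)$ when $u(x_0)\le 0$ — here the sign $c+\lambda-K_0<0$ is essential; the $\gamma=2$ clause of the viscosity definition is treated identically. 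Second, by the $(3-\gamma)$-homogeneity of $\ginfdel$, of $v\mapsto q(x)\cdot\grad v\,\abs{\grad v}^{2-\gamma}$ and of $v\mapsto v^{3-\gamma}$, the scaled function $\tau\varphi$ is again a viscosity supersolution of $\sL v+(c+\hat\lambda)v^{3-\gamma}=0$, hence of $\sL v+(c(x)+\lambda-K_0)F(v)=h_2^{(0)}$ with $h_2^{(0)}(x):=-(\hat\lambda+K_0-\lambda)(\tau\varphi(x))^{3-\gamma}$. The terms $K_0$ cancel in the difference: $h_1-h_2^{(0)}=(\hat\lambda-\lambda)(\tau\varphi)^{3-\gamma}\ge(\hat\lambda-\lambda)(\tau\varphi_0)^{3-\gamma}=:2\sigma>0$.

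To exploit this strict gap I perturb: for $\delta\in(0,\tau\varphi_0)$ small, $\tau\varphi-\delta$ is still a viscosity supersolution of $\sL v+(c(x)+\lambda-K_0)F(v)=h_2$ with $h_2\le h_2^{(0)}+\omega_\delta$, where $\omega_\delta\to 0$ as $\delta\to 0$ (the constant shift changes $F(\tau\varphi)$ by a uniform modulus of $\delta$, and the resulting correction has the favorable sign because $c+\lambda-K_0<0$ and $\tau\varphi-\delta<\tau\varphi$); choosing $\delta$ so small that $\omega_\delta\le\sigma$ yields $h_1>h_2$ pointwise. On $\partial\mathcal{O}$ we have $u\le 0<\tau\varphi_0-\delta\le\tau\varphi-\delta$. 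Hence Theorem~\ref{T2.1}, applied with the strictly increasing $F$, the strictly negative coefficient $c+\lambda-K_0$, and the strict ordering $h_1>h_2$ (case (b); the $\gamma=2$ case is covered there), gives $u\le\tau\varphi-\delta$ on $\bar{\mathcal{O}}$. Consequently $u\le(\tau-\delta/\max_{\bar{\mathcal{O}}}\varphi)\varphi$ on $\bar{\mathcal{O}}$; writing $\tau':=\tau-\delta/\max_{\bar{\mathcal{O}}}\varphi<\tau$, either $\tau'\le 0$, which forces $u\le 0$, or $\tau'>0$, which contradicts the minimality of $\tau$. In either case we reach a contradiction, so $u\le 0$ in $\mathcal{O}$.

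The sliding argument itself is classical; the substantive points are the two reformulations in the second paragraph. The first — rewriting the $u_+^{3-\gamma}$ inequality as a clean inequality with the strictly monotone $F$ and the strictly negative coefficient $c+\lambda-K_0$ — is exactly what makes the comparison principle applicable, and the sign of $c+\lambda-K_0$ is used both there and to ensure that the $K_0$-terms cancel in $h_1-h_2$. The second — that scaling by $\tau$ and shifting by a small constant preserve the (strict) supersolution property in the viscosity sense — is where one must be careful, in particular with the degenerate $\gamma=2$ clause of the definition of viscosity solution.
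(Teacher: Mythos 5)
Your proof is correct and follows essentially the paper's route. The paper fixes $\lambda_1\in(\lambda,\lambda_{\mathcal{O}})$ with an admissible positive supersolution $v$, scales by $\kappa=\max_{\mathcal{O}}(u/v)$ so that $\kappa v$ touches $u$ from above at an interior point (the boundary values keep them separated), and obtains the contradiction by rerunning the doubling-of-variables argument of Theorem~\ref{T2.1} at that touching point; your $\tau$ is exactly this $\kappa$, and the $K_0$-shift to make the zeroth-order coefficient strictly negative, the monotone extension $F(s)=\sgn(s)\abs{s}^{3-\gamma}$ to absorb the $u_+$, and the $\delta$-perturbation of $\tau\varphi$ merely repackage the same comparison so that Theorem~\ref{T2.1}(b) can be invoked as a black box rather than re-derived. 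One small slip worth flagging: the $\delta$-correction in fact has the \emph{unfavorable} sign, not the favorable one as your parenthetical claims, since $c+\lambda-K_0<0$ and $F(\tau\varphi-\delta)<F(\tau\varphi)$ force $(c+\lambda-K_0)\bigl[F(\tau\varphi-\delta)-F(\tau\varphi)\bigr]>0$, i.e.\ $h_2>h_2^{(0)}$, eating into the gap; but what you actually use, and correctly state, is only that this correction is bounded by $\omega_\delta\to 0$, which preserves $h_1>h_2$ for $\delta$ small, so the argument is unaffected.
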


\begin{proof}
By definition we can find a $\lambda_1\in (\lambda, \lambda_\mathcal{O})$ and $v\in\cC(\bar{\mathcal{O}})$, positive on the boundary, satisfying
$$\sL v + c(x)v^{3-\gamma} + \lambda_1 v^{3-\gamma} \leq 0 \quad \text{in}\; \mathcal{O}.$$
Suppose $u^+\neq 0$. Let $\kappa=\max_{\mathcal{O}}\frac{u}{v}>0$. Then $\kappa v$ touches $u$ from above in $\mathcal{O}$. We replace $v$ by $\kappa v$ in the above.
Repeating the arguments of Theorem~\ref{T2.1}, we can find a point $z\in\mathcal{O}$ with $v(z)=u(z)>0$ and $\lambda u^{3-\gamma}(z) \geq \lambda_1 v^{3-\gamma}(z)$, which  contradicts 
the choice of $\lambda_1$. The result thus follows.
\end{proof}

Now we can show that $\lambda_\mathcal{O}$ is finite. Due to the monotonicity property with respect to domains it is enough to show that it is finite over balls.

\begin{lemma}\label{L3.3}
Let $\lambda_R$ be the principal eigenvalue in $\sB_R(0)$. Then it holds that $\lambda_R<\infty$.
\end{lemma}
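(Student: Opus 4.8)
The plan is to exhibit a single smooth positive supersolution of the eigenvalue inequality on $\sB_R(0)$ for a sufficiently negative value of $\lambda$, which by the very definition of $\lambda_R$ as a supremum forces $\lambda_R$ to lie below some finite threshold. Concretely, I would look for a radial test function of the form $\varphi(x)=A-B\abs{x}^{\alpha}$ or, even simpler, $\varphi(x)=\cosh(a\abs{x})$ — something that is $\cC^2$, strictly positive on $\overline{\sB_R(0)}$, and whose infinity Laplacian has a controllable sign. For a radial function $\varphi(\abs{x})$ one computes $\ginfdel\varphi = \abs{\varphi'}^{2-\gamma}\varphi''$ (since the gradient is radial, the one-dimensional second derivative in the radial direction is picked out), and $q(x)\cdot\grad\varphi\,\abs{\grad\varphi}^{2-\gamma} = \varphi'\,\abs{\varphi'}^{2-\gamma}\,\tfrac{q(x)\cdot x}{\abs{x}}$. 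Both terms are then bounded in absolute value on $\overline{\sB_R(0)}$ by a constant depending only on $R$, $\norm{q}_{L^\infty(\sB_R)}$, and the chosen profile, because $\abs{\varphi'}$ and $\abs{\varphi''}$ are bounded there.

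The key step is the bookkeeping: with $\varphi$ fixed as above and bounded below by a positive constant $m>0$ on $\overline{\sB_R(0)}$, we get
\begin{equation*}
\sL\varphi + c(x)\varphi^{3-\gamma} + \lambda\varphi^{3-\gamma}
\,\leq\, C_0 + \norm{c}_\infty\norm{\varphi}_\infty^{3-\gamma} + \lambda\, m^{3-\gamma}
\end{equation*}
for a constant $C_0=C_0(R,\norm{q}_\infty,\varphi)$ coming from the first two terms of $\sL\varphi$. Hence, choosing
\begin{equation*}
\lambda\,\leq\, -\,\frac{C_0 + \norm{c}_\infty\norm{\varphi}_\infty^{3-\gamma}}{m^{3-\gamma}}\,=:\,-\Lambda_R,
\end{equation*}
the right-hand side is $\leq 0$, so $\varphi$ is an admissible competitor in the supremum defining $\lambda_R$ for every such $\lambda$. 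This does \emph{not} bound $\lambda_R$ from above; rather, one argues the other way. To get finiteness from above I would instead test against a function that \emph{violates} the inequality: suppose, for contradiction, $\lambda_R=+\infty$, i.e.\ for every $\lambda$ there is a positive $\varphi_\lambda\in\cC(\overline{\sB_R(0)})$ with $\sL\varphi_\lambda + (c+\lambda)\varphi_\lambda^{3-\gamma}\leq 0$ and $\min\varphi_\lambda>0$. Normalize $\min_{\overline{\sB_R(0)}}\varphi_\lambda=1$, attained at some $x_\lambda$. Then at an interior minimum one may touch $\varphi_\lambda$ from below by a constant (or by a suitable paraboloid), and the viscosity supersolution property at that point yields, after discarding the nonnegative $\ginfdel$ and gradient contributions from a constant touching function, the pointwise bound $c(x_\lambda)+\lambda\leq 0$ (for $\gamma<2$ one uses $\grad\varphi=0$ forces the relevant jet terms to vanish; for $\gamma=2$ one uses clause (iii) with $m(D^2\varphi)\leq 0$ at a minimum). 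If the minimum sits on $\partial\sB_R(0)$, Lemma~\ref{L3.1}-type boundary control (or a direct barrier as in Theorem~\ref{T2.3}) rules that out for $R$ away from degenerate cases, or one simply slides the interior point inward. Taking $\lambda>\norm{c}_\infty$ contradicts $c(x_\lambda)+\lambda\leq 0$.

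The main obstacle I anticipate is handling the touching argument cleanly at the minimum point when the gradient of the competitor vanishes there, since $\ginfdel$ is singular (the $\abs{\grad u}^{-\gamma}$ factor) precisely at critical points; this is exactly why the definition of viscosity solution in the excerpt carries the separate clause (iii) for $\gamma=2$, and the proof must invoke it. A secondary technical point is that competitors $\varphi$ are only assumed continuous, not $\cC^2$, so the argument genuinely lives at the level of jets — one produces $(0,X)\in\bar J^{2,-}_{\mathcal{O}}\varphi_\lambda(x_\lambda)$ with $X\geq 0$ (minimum) forcing $m(X)\geq 0$, hence the operator evaluated there is $\geq 0$, and the inequality $\sL\varphi_\lambda+(c+\lambda)\varphi_\lambda^{3-\gamma}\leq 0$ then gives $c(x_\lambda)+\lambda\leq 0$. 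Once this pointwise bound is in hand, finiteness $\lambda_R\leq\norm{c}_\infty$ is immediate, and combined with $\lambda_R\geq-\norm{c}_\infty$ noted before the lemma, we conclude $\lambda_R$ is finite.
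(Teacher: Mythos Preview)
Your argument has a genuine gap that in fact leads to a false conclusion. You assert that the competitor $\varphi_\lambda$ can be touched from below by a constant at an \emph{interior} minimum, and from this deduce $\lambda\leq\|c\|_\infty$, hence $\lambda_R\leq\|c\|_\infty$. But nothing in the definition of $\lambda_R$ forces the competitors to attain their minimum in the interior; the class only requires $\min_{\bar{\sB_R}}\varphi>0$. Your proposed fixes do not rescue this: Lemma~\ref{L3.1} concerns solutions \emph{vanishing} on the boundary and says nothing about positive supersolutions, and ``sliding the interior point inward'' does not produce a local minimum. Concretely, take $\gamma=2$, $q\equiv 0$, $c\equiv 0$, and $\varphi(x)=\cos(\sqrt{\lambda}\,|x|)$ on $\bar{\sB_R}$ with $0<\lambda<(\pi/2R)^2$. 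One checks directly that $\Delta_\infty^2\varphi+\lambda\varphi\leq 0$ in $\sB_R$ (including at the origin via clause~(iii), since any $\psi$ touching from below there has $D^2\psi(0)\leq D^2\varphi(0)=-\lambda I$, giving $m(D^2\psi(0))+\lambda\leq 0$), while $\min_{\bar{\sB_R}}\varphi=\cos(\sqrt{\lambda}R)>0$. Thus $\lambda_R\geq (\pi/2R)^2>0=\|c\|_\infty$, contradicting your bound. The minimum of this $\varphi$ sits on $\partial\sB_R$ and there is no interior local minimum to test against.

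The paper proceeds quite differently. Rather than probing a hypothetical supersolution, it builds an explicit \emph{sub}solution $\varphi(x)=e^{-k|x|^2}-e^{-kR^2}$ (a Gaussian bump vanishing on $\partial\sB_R$) and shows by direct computation that, for $k$ and then $\lambda$ chosen large enough, $\ginfdel\varphi + q\cdot\grad\varphi|\grad\varphi|^{2-\gamma}+(c+\lambda)\varphi^{3-\gamma}>0$ throughout $\sB_R$. Since $\varphi>0$ in $\sB_R$ and $\varphi=0$ on $\partial\sB_R$, Lemma~\ref{L3.2} (the maximum principle, valid whenever $\lambda<\lambda_R$) would force $\varphi\leq 0$; the contradiction yields $\lambda_R\leq\lambda$. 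This route sidesteps the boundary-minimum issue entirely because the comparison in Lemma~\ref{L3.2} is engineered so that the touching point between $\varphi$ and the competitor $\kappa v$ is automatically interior ($\varphi$ vanishes on $\partial\sB_R$ while $v>0$ there). If you want to salvage a direct approach, you would need to produce such a subsolution or otherwise force an interior touching point; the bare minimum-of-a-supersolution idea is not enough.
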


\begin{proof}
Let $\varphi(x)=(e^{-k\abs{x}^2}-e^{-k\abs{R}^2})$. We show that for some large $k$ and $\lambda$ we have 
$$\ginfdel \varphi + q(x)\cdot \grad\varphi \abs{\grad\varphi}^{2-\gamma} + (c(x)+\lambda)\varphi^{3-\gamma} \,>\, 0\quad \text{in}\; \sB_R(0).$$
Then, by Lemma~\ref{L3.2}, it follows that $\lambda_R \leq \lambda$, which proves the result. A direct computation yields
\begin{align*}
&\ginfdel \varphi + q(x)\cdot \grad\varphi \abs{\grad\varphi}^{2-\gamma} + (c(x)+\lambda)\varphi^{3-\gamma}
\\
&\geq (2k\abs{x})^{4-\gamma}  e^{-(3-\gamma)k\abs{x}^2} - (2 k)^{3-\gamma} \abs{x}^{2-\gamma} e^{-(3-\gamma)k\abs{x}^2} -\norm{q}_\infty (2k\abs{x})^{3-\gamma}  e^{-(3-\gamma)k\abs{x}^2} 
\\
&\qquad + (c(x)+\lambda) \left( e^{-k\abs{x}^2}-e^{-k\abs{R}^2}\right)^{3-\gamma}
\\
&\geq e^{-(3-\gamma)k\abs{x}^2} \Bigl[(2k\abs{x})^{4-\gamma} - (2 k)^{3-\gamma} \abs{x}^{2-\gamma} -\norm{q}_\infty (2k\abs{x})^{3-\gamma} 
\\
&\qquad + (\lambda-\norm{c}) \left(1-e^{-k(\abs{R}^2-\abs{x}^2)}\right)^{3-\gamma}\Bigr].
\end{align*}
Now choose $k$ large enough so that for $R/2\leq \abs{x}\leq R$ we have
$$(2k\abs{x})^{4-\gamma} - (2 k)^{3-\gamma} \abs{x}^{2-\gamma} -\norm{q}_\infty (2k\abs{x})^{3-\gamma}\,>\, 0\,.$$
 With this choice of 
$k$, we choose $\lambda>\norm{c}$ large so that for $|x|\leq R/2$ we get 
$$(2k\abs{x})^{4-\gamma} - (2 k)^{3-\gamma} \abs{x}^{2-\gamma} -\norm{q}_\infty (2k\abs{x})^{3-\gamma} + (\lambda-\norm{c}) \left(1-e^{-k(\abs{R}^2-\abs{x}^2)}\right)^{3-\gamma}>0.$$
Note that for $\gamma=2$ we have to modify the calculation at $x=0$, but the estimate holds. 
Combining both cases together we have the result.
\end{proof}

Let us now prove a standard existence result.

\begin{lemma}\label{L3.4}
Let $\lambda<\lambda_\mathcal{O}$. Then there exists a positive solution to $\sL u + c(x)u^{3-\gamma} + \lambda u^{3-\gamma} = -1$ in $\mathcal{O}$ with $u=0$ on $\partial\mathcal{O}$.
\end{lemma}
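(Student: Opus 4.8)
The plan is to obtain $u$ by a monotone iteration built on Theorem~\ref{T2.3}, after a sign shift that repairs the ``wrong'' sign of the zeroth order coefficient $c+\lambda$. First I would produce a strict super-solution of the target equation. Since $\lambda<\lambda_\mathcal{O}$, pick $\lambda'\in(\lambda,\lambda_\mathcal{O})$; by the definition of $\lambda_\mathcal{O}$ there is $\psi\in\cC(\bar{\mathcal{O}})$ with $\min_{\bar{\mathcal{O}}}\psi>0$ and $\sL\psi+(c+\lambda')\psi^{3-\gamma}\le 0$ in $\mathcal{O}$. The map $v\mapsto\sL v+(c+\lambda)v^{3-\gamma}$ is positively $(3-\gamma)$-homogeneous, so for $t>0$,
$$\sL(t\psi)+(c+\lambda)(t\psi)^{3-\gamma}=t^{3-\gamma}\bigl[\sL\psi+(c+\lambda')\psi^{3-\gamma}+(\lambda-\lambda')\psi^{3-\gamma}\bigr]\le t^{3-\gamma}(\lambda-\lambda')\bigl(\min_{\bar{\mathcal{O}}}\psi\bigr)^{3-\gamma},$$
and the last quantity is $\le-1$ for $t$ large because $\lambda-\lambda'<0$. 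Fixing such a $t$ and setting $\bar u=t\psi>0$ on $\bar{\mathcal{O}}$, we get that $\bar u$ is a super-solution of $\sL u+(c+\lambda)u^{3-\gamma}=-1$ and that $\underline u\equiv 0$ is a sub-solution vanishing on $\partial\mathcal{O}$.

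Next I would shift and iterate. Choose $C_0>\norm{c+\lambda}_\infty$, so that $c+\lambda-C_0<0$ in $\bar{\mathcal{O}}$. Set $u_0=\bar u$ and let $u_{n+1}\in\cC(\bar{\mathcal{O}})$ be the unique solution furnished by Theorem~\ref{T2.3} of
$$\sL u_{n+1}+(c+\lambda-C_0)u_{n+1}^{3-\gamma}=-1-C_0u_n^{3-\gamma}\ \text{in}\ \mathcal{O},\qquad u_{n+1}=0\ \text{on}\ \partial\mathcal{O};$$
the hypotheses of Theorem~\ref{T2.3} hold because the new zeroth order coefficient $c+\lambda-C_0$ is negative, $0$ is a sub-solution, and $u_n$ is a super-solution of this problem --- the super-solution inequality $\sL u_n+(c+\lambda)u_n^{3-\gamma}\le-1$ holds for $u_0$ by the previous step and, using the monotonicity $u_{n+1}\le u_n$ that Theorem~\ref{T2.3} itself delivers together with the monotonicity of $s\mapsto s^{3-\gamma}$ on $[0,\infty)$, propagates to every $n$. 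Thus $0\le u_{n+1}\le u_n\le\bar u$ and $u_n$ decreases pointwise to some $u\ge 0$. The right-hand sides $-1-C_0u_n^{3-\gamma}$ are bounded in $n$ (between $-1-C_0\norm{\bar u}_\infty^{3-\gamma}$ and $-1$), so Lemma~\ref{L2.1} yields a uniform local Lipschitz bound for $\{u_n\}$ in $\mathcal{O}$; with monotone convergence this gives locally uniform convergence, hence by stability of viscosity solutions $u$ solves $\sL u+(c+\lambda-C_0)u^{3-\gamma}=-1-C_0u^{3-\gamma}$, i.e. $\sL u+(c+\lambda)u^{3-\gamma}=-1$ in $\mathcal{O}$. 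Applying Lemma~\ref{L3.1} to each $u_n$ with the bounded right-hand sides above gives $u_n(x)\le C\dist^\alpha(x,\partial\mathcal{O})$ with $C$ independent of $n$, so $u\in\cC(\bar{\mathcal{O}})$ and $u=0$ on $\partial\mathcal{O}$. Finally $u\not\equiv 0$ (otherwise the equation reads $0=-1$) and $u$ is a non-negative super-solution of $\sL v+(c+\lambda)v^{3-\gamma}=0$, so Theorem~\ref{T2.2} forces $u>0$ in $\mathcal{O}$, which completes the proof.

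I expect the main obstacle to be precisely the shift step: the coefficient $c+\lambda$ of $u^{3-\gamma}$ need not be negative, so neither Theorem~\ref{T2.3} nor the comparison principle behind it applies to the equation as written, and subtracting $C_0$ converts the (homogeneously linear) problem into a genuine fixed-point problem that must be resolved through the monotone scheme. The subsidiary technical points are to make sure the monotone limit retains the homogeneous boundary datum --- handled by the uniform (in $n$) barrier bound of Lemma~\ref{L3.1} --- and to invoke stability of viscosity solutions in the sense appropriate to $\ginfdel$, including the $\gamma=2$ convention at critical points of the test functions.
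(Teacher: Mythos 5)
Your proof is correct and takes essentially the same route as the paper's: the super-solution $\bar u = t\psi$ is obtained by scaling up a witness for $\lambda'\in(\lambda,\lambda_\mathcal{O})$, existence then comes from a monotone iteration built on Theorem~\ref{T2.3}/Theorem~\ref{T2.1} with the uniform Lipschitz estimate of Lemma~\ref{L2.1} and the barrier estimate of Lemma~\ref{L3.1} securing convergence and the boundary datum, and positivity follows from Theorem~\ref{T2.2}. You merely spell out explicitly the shift $C_0$ that makes the zeroth-order coefficient negative, a step the paper leaves implicit in the phrase ``monotone iteration method.''
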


\begin{proof}
Note that $0$ is a subsolution. Also for any $\lambda_1\in (\lambda, \lambda_\mathcal{O}, )$ there exists 
$v\in\cC(\bar{\mathcal{O}})$, positive on the boundary, satisfying
$$\sL v + c(x)v^{3-\gamma} + \lambda_1 v^{3-\gamma}\,\leq\, 0,\quad \text{in}\; \mathcal{O}.$$
Thus, for large $\kappa>1$, we have
$$\sL (\kappa v) + c(x) (\kappa v)^{3-\gamma} \leq -\lambda_1 (\kappa v)^{3-\gamma}= - \lambda (\kappa v)^{3-\gamma} + (-\lambda_1+\lambda) (\kappa v)^{3-\gamma} 
\leq -\lambda (\kappa v)^{3-\gamma} -1.$$
This gives a super-solution. Then existence follows from the monotone iteration method and comparison principle in Theorem~\ref{T2.1}. Note that the sequence of monotone iteration
function converges due to Lemma~\ref{L2.1} and Lemma~\ref{L3.1}.
The strict positivity follows from the 
strong maximum principle, Theorem~\ref{T2.2}.
\end{proof}
Now we prove an existence of a principal eigenpair.

\begin{theorem}\label{T3.1}
There exists a positive solution $\varphi$ of $\sL\varphi + c(x)\varphi^{3-\gamma} + \lambda_\mathcal{O} \varphi^{3-\gamma}=0$ in $\mathcal{O}$ with $\varphi=0$ on $\partial\mathcal{O}$.
\end{theorem}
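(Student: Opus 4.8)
The plan is to obtain $\varphi$ as a renormalized limit of the penalized solutions of Lemma~\ref{L3.4} as the spectral parameter approaches $\lambda_{\mathcal{O}}$ (finite by Lemma~\ref{L3.3}), using crucially that $\ginfdel$ and $q(x)\cdot\grad u\,\abs{\grad u}^{2-\gamma}$ are positively $(3-\gamma)$-homogeneous in $u$. Fix a strictly increasing sequence $\lambda_n\uparrow\lambda_{\mathcal{O}}$ and invoke Lemma~\ref{L3.4} to get positive $u_n\in\cC(\bar{\mathcal{O}})$ with $u_n=0$ on $\partial\mathcal{O}$ and $\sL u_n+(c(x)+\lambda_n)u_n^{3-\gamma}=-1$ in $\mathcal{O}$. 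Put $t_n=\norm{u_n}_{L^\infty(\mathcal{O})}$ and $\varphi_n=u_n/t_n$; homogeneity yields $\sL\varphi_n+(c(x)+\lambda_n)\varphi_n^{3-\gamma}=-t_n^{-(3-\gamma)}$ in $\mathcal{O}$, with $\norm{\varphi_n}_\infty=1$, $\varphi_n\ge0$, and $\varphi_n=0$ on $\partial\mathcal{O}$.

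The heart of the argument is to show $\limsup_n t_n=\infty$. Suppose instead $\{t_n\}$ is bounded. Then the coefficients $c+\lambda_n$, the right-hand sides, and $\norm{u_n}_\infty$ are all bounded, so Lemma~\ref{L2.1} (uniform interior Lipschitz bound) and Lemma~\ref{L3.1} (uniform boundary decay) make $\{u_n\}$ equicontinuous on $\bar{\mathcal{O}}$; along a subsequence $u_n\to u$ uniformly, and viscosity stability gives $\sL u+(c(x)+\lambda_{\mathcal{O}})u^{3-\gamma}=-1$ in $\mathcal{O}$, $u=0$ on $\partial\mathcal{O}$, $u\ge0$. Since the right-hand side is $-1\neq0$, $u\not\equiv0$, so by the strong maximum principle (Theorem~\ref{T2.2}), $u>0$ in $\mathcal{O}$. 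Now for $\lambda>\lambda_{\mathcal{O}}$ and $\delta>0$ the function $\psi=u+\delta$ is strictly positive on $\bar{\mathcal{O}}$, and because $\sL$ sees only derivatives, $\sL(u+\delta)=\sL u$ at every test point, so in the viscosity sense
$$\sL\psi+(c+\lambda)\psi^{3-\gamma}\,=\,-1+\bigl[(c+\lambda)(u+\delta)^{3-\gamma}-(c+\lambda_{\mathcal{O}})u^{3-\gamma}\bigr]\quad\text{in }\mathcal{O}.$$
The bracket is controlled by a modulus-of-continuity term in $\delta$ for $t\mapsto t^{3-\gamma}$ on $[0,\norm{u}_\infty+1]$, weighted by $\norm{c+\lambda_{\mathcal{O}}}_\infty$, plus $(\lambda-\lambda_{\mathcal{O}})(\norm{u}_\infty+1)^{3-\gamma}$; choosing first $\delta$ small, then $\lambda-\lambda_{\mathcal{O}}$ small, makes it $<1$, so $\psi$ is a positive-on-$\bar{\mathcal{O}}$ supersolution of $\sL\cdot+(c+\lambda)(\cdot)^{3-\gamma}\le0$ for some $\lambda>\lambda_{\mathcal{O}}$, contradicting the definition of $\lambda_{\mathcal{O}}$ as a supremum. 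Hence $\limsup_n t_n=\infty$.

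Passing to a subsequence with $t_n\to\infty$, the right-hand side $-t_n^{-(3-\gamma)}\to0$, while Lemma~\ref{L2.1} and Lemma~\ref{L3.1} apply uniformly to $\varphi_n$ (the boundary constant is automatically $n$-independent since $\norm{\varphi_n}_\infty=1\le2$). Arzel\`a--Ascoli yields a further subsequence with $\varphi_n\to\varphi$ uniformly on $\bar{\mathcal{O}}$, $\norm{\varphi}_\infty=1$, $\varphi\ge0$, $\varphi=0$ on $\partial\mathcal{O}$; viscosity stability gives $\sL\varphi+(c(x)+\lambda_{\mathcal{O}})\varphi^{3-\gamma}=0$ in $\mathcal{O}$, and since $\varphi\gneq0$ the strong maximum principle (Theorem~\ref{T2.2}) forces $\varphi>0$ in $\mathcal{O}$. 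This is the desired principal eigenfunction.

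The step I expect to be the main obstacle is ruling out the bounded case $\limsup_n t_n<\infty$: the perturbation $\psi=u+\delta$ must be verified to be a genuine viscosity supersolution for a strictly larger parameter, which requires careful bookkeeping of the viscosity inequalities (including the $\gamma=2$ clauses at critical points) and of the uniformity of the constants in Lemmas~\ref{L2.1} and~\ref{L3.1} across the family $\{u_n\}$ and $\{\varphi_n\}$.
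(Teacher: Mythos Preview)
Your proof is correct and follows essentially the same route as the paper's: take solutions $u_n$ from Lemma~\ref{L3.4} with $\lambda_n\uparrow\lambda_{\mathcal{O}}$, show that $\norm{u_n}_\infty\to\infty$ by arguing that otherwise a limit $u$ solves $\sL u+(c+\lambda_{\mathcal{O}})u^{3-\gamma}=-1$ and then $u+\delta$ furnishes an admissible supersolution for some $\lambda>\lambda_{\mathcal{O}}$, and finally pass to the limit in the normalized sequence using Lemmas~\ref{L2.1} and~\ref{L3.1}. The only cosmetic difference is that the paper first translates to make $-\lambda_{\mathcal{O}}>0$, which slightly streamlines the $\psi+\varepsilon$ perturbation step, whereas you handle the general case directly by splitting the bracket into a $\delta$-term and a $(\lambda-\lambda_{\mathcal{O}})$-term.
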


\begin{proof}
Assume $-\lambda_\mathcal{O}$ is positive, otherwise translate.
Let $(\psi_n, \lambda_n)$ be a sequence of solutions from Lemma~\ref{L3.4} and $\lambda_n\searrow \lambda_\mathcal{O}$. We claim that $\norm{\psi_n}_{L^\infty(\mathcal{O})}$ is unbounded. 
If not, employing 
Lemma~\ref{L2.1} and Lemma~\ref{L3.1}, we can find a  subsequence of $\{\psi_n\}$, converging to $\psi$ with $\norm{\psi}_{L^\infty(\mathcal{O})}>0$, and 
$$\sL\psi + c(x)\psi^{3-\gamma} = -1- \lambda_\mathcal{O} \psi^{3-\gamma}\quad \text{in}\; \mathcal{O}, \quad \text{and}\quad \psi=0\quad \text{on}\; \partial\mathcal{O}.$$
Again, by Theorem~\ref{T2.2}, $\psi>0$ in $\mathcal{O}$. Note that for $\psi_\varepsilon=\psi + \varepsilon$
$$\sL\psi_\varepsilon + c(x)\psi_\varepsilon^{3-\gamma} \leq -1- \lambda_\mathcal{O} \psi^{3-\gamma} +\order(\varepsilon)
\leq \left[-\frac{1+\order(\varepsilon)}{\max\psi_\varepsilon} -\lambda_\mathcal{O}\right] \psi_\varepsilon^{3-\gamma}  \leq -\mu \psi_\varepsilon^3,$$
for some $\mu>\lambda_\mathcal{O}$, provided we choose $\varepsilon$ small enough. This contradicts to the definition of $\lambda_\mathcal{O}$ and this
confirms the claim. Now define $\varphi_n=[\norm{\psi_n}_{L^\infty(\mathcal{O})}]^{-1}\psi_n$. 
Then use Lemma~\ref{L2.1} and Lemma~\ref{L3.1}, to pass to the limit and obtain a principal eigenfunction.
\end{proof}

\begin{corollary}\label{C3.1}
In view of the above Theorem~\ref{T3.1} and Lemma~\ref{L3.2} we obtain the following characterization of the principal eigenvalue
$$\lambda_\mathcal{O}=\inf\{\lambda\in\R\; :\; \exists \; \psi^+\neq 0, \; \psi\leq 0\; \text{on}\; \partial \mathcal{O}, \; \text{and}\; \sL\psi + c(x)\psi_+^{3-\gamma} + \lambda\psi_+^{3-\gamma}\geq 0 \}.$$
\end{corollary}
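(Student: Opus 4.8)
The plan is to prove the two inequalities separately. Write $\Lambda$ for the set on the right-hand side,
\[
\Lambda\df\Bigl\{\lambda\in\R\; :\; \exists\,\psi\ \text{with}\ \psi^+\neq 0,\ \psi\leq 0\ \text{on}\ \partial\mathcal{O},\ \sL\psi+c(x)\psi_+^{3-\gamma}+\lambda\psi_+^{3-\gamma}\geq 0\ \text{in}\ \mathcal{O}\Bigr\},
\]
and the goal is to show $\inf\Lambda=\lambda_\mathcal{O}$.

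For $\inf\Lambda\geq\lambda_\mathcal{O}$, I would show that $\Lambda\cap(-\infty,\lambda_\mathcal{O})=\emptyset$. Fix $\lambda<\lambda_\mathcal{O}$ and suppose $\psi$ were an element witnessing $\lambda\in\Lambda$: then $\psi\leq 0$ on $\partial\mathcal{O}$ and $\sL\psi+c(x)\psi_+^{3-\gamma}+\lambda\psi_+^{3-\gamma}\geq 0$ in $\mathcal{O}$, which is precisely the hypothesis of Lemma~\ref{L3.2}. That lemma forces $\psi\leq 0$ in $\mathcal{O}$, i.e.\ $\psi^+\equiv 0$, contradicting $\psi^+\neq 0$. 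Hence no $\lambda<\lambda_\mathcal{O}$ belongs to $\Lambda$, so every element of $\Lambda$ is at least $\lambda_\mathcal{O}$, giving $\inf\Lambda\geq\lambda_\mathcal{O}$.

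For the reverse inequality $\inf\Lambda\leq\lambda_\mathcal{O}$, I would exhibit an explicit competitor realizing the value $\lambda_\mathcal{O}$. Take the principal eigenfunction $\varphi$ produced by Theorem~\ref{T3.1}: it satisfies $\varphi=0$ on $\partial\mathcal{O}$, $\varphi>0$ in $\mathcal{O}$ (by Theorem~\ref{T2.2}), and $\sL\varphi+c(x)\varphi^{3-\gamma}+\lambda_\mathcal{O}\varphi^{3-\gamma}=0$ in $\mathcal{O}$. Setting $\psi=\varphi$ we have $\psi^+=\varphi\not\equiv 0$, $\psi\leq 0$ on $\partial\mathcal{O}$ (with equality), and, since $\varphi\geq 0$ gives $\varphi_+^{3-\gamma}=\varphi^{3-\gamma}$, the equation satisfied by $\varphi$ in particular yields the one-sided subsolution inequality $\sL\psi+c(x)\psi_+^{3-\gamma}+\lambda_\mathcal{O}\psi_+^{3-\gamma}\geq 0$ in $\mathcal{O}$. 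Therefore $\lambda_\mathcal{O}\in\Lambda$ and $\inf\Lambda\leq\lambda_\mathcal{O}$. Combining the two inequalities gives the claimed characterization.

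There is no genuine obstacle here; the statement is just a repackaging of Lemma~\ref{L3.2} (which gives the ``$\geq$'' half, a maximum-principle-type exclusion) together with Theorem~\ref{T3.1} (which supplies an admissible function at the threshold). The only step that needs a line of care is verifying that the eigenfunction $\varphi$ is an admissible member of $\Lambda$ — namely that substituting $\varphi_+$ for $\varphi$ in the zeroth-order term is harmless because $\varphi\geq 0$, and that the viscosity solution identity for $\varphi$ delivers the required inequality in the viscosity sense.
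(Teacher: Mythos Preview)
Your proposal is correct and matches exactly the approach indicated in the paper, which does not give a separate proof but simply remarks that the characterization follows from Theorem~\ref{T3.1} and Lemma~\ref{L3.2}. You have spelled out precisely the two halves the paper leaves implicit: Lemma~\ref{L3.2} excludes all $\lambda<\lambda_\mathcal{O}$ from $\Lambda$, and the eigenfunction from Theorem~\ref{T3.1} witnesses $\lambda_\mathcal{O}\in\Lambda$.
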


In view of the above corollary the following result is immediate
\begin{lemma}\label{L3.5}
Suppose that $\lambda_\alpha$ is the principal eigenvalue with respect to the potential $\alpha c(x)$. Then it holds that $\lim_{\alpha\to\infty} \frac{\lambda_\alpha}{\alpha}=-\sup_{\mathcal{O}} c$.
\end{lemma}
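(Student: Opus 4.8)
The plan is to prove the two matching one‑sided bounds $\liminf_{\alpha\to\infty}\lambda_\alpha/\alpha\ge-\sup_{\mathcal{O}}c$ and $\limsup_{\alpha\to\infty}\lambda_\alpha/\alpha\le-\sup_{\mathcal{O}}c$. Both use only the definition of the principal eigenvalue together with three elementary facts: monotonicity of $\lambda_{\mathcal{O}}$ with respect to the domain (already invoked in the proof of Lemma~\ref{L3.3}); monotonicity with respect to the potential, namely if $c_1\le c_2$ then the principal eigenvalue for $c_1$ is $\ge$ that for $c_2$, since any positive supersolution for $c_2$ is one for $c_1$; and the shift identity that replacing the potential by $c+k$ with $k$ a constant changes the principal eigenvalue to $\lambda_{\mathcal{O}}-k$ (both immediate from the definition), plus finiteness of the principal eigenvalue on a ball from Lemma~\ref{L3.3}.

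For the lower bound I would test the defining inequality with the constant function $\varphi\equiv1$. Exactly as in the observation that $\lambda_{\mathcal{O}}\ge-\norm{c}_\infty$, the constant is a viscosity supersolution of $\sL\varphi=0$ in $\mathcal{O}$; the only point needing a remark is the degeneracy of $\ginfdel$ at a vanishing gradient when $\gamma\in(0,2]$, which is handled exactly as in the definition of viscosity solution (condition (iii) for $\gamma=2$). Taking $\lambda=-\alpha\sup_{\mathcal{O}}c$ we have $\alpha c(x)+\lambda=\alpha\bigl(c(x)-\sup_{\mathcal{O}}c\bigr)\le0$ on $\mathcal{O}$, so $\varphi\equiv1$ is a positive supersolution of $\sL\varphi+\alpha c\,\varphi^{3-\gamma}+\lambda\varphi^{3-\gamma}\le0$ with $\min_{\bar{\mathcal{O}}}\varphi=1>0$; hence $\lambda$ lies in the set defining $\lambda_\alpha$ and $\lambda_\alpha\ge-\alpha\sup_{\mathcal{O}}c$ for every $\alpha>0$. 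Dividing by $\alpha$ gives the $\liminf$ bound.

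For the upper bound I would localise near a point where $c$ is close to its supremum. Given $\eps>0$, since $c$ is continuous and $\sup_{\mathcal{O}}c=\sup_{\bar{\mathcal{O}}}c$, the open set $\{x\in\mathcal{O}:c(x)>\sup_{\mathcal{O}}c-\eps\}$ is nonempty and hence contains a ball $\sB_\rho(x_\ast)\Subset\mathcal{O}$ on which $c>\sup_{\mathcal{O}}c-\eps$. Writing $\lambda_\alpha(\Omega')$ for the principal eigenvalue of $\sL+\alpha c$ on a subdomain $\Omega'$, domain monotonicity gives $\lambda_\alpha=\lambda_\alpha(\mathcal{O})\le\lambda_\alpha(\sB_\rho(x_\ast))$, and since $\alpha c\ge\alpha(\sup_{\mathcal{O}}c-\eps)$ on $\sB_\rho(x_\ast)$, the potential monotonicity and the shift identity yield $\lambda_\alpha(\sB_\rho(x_\ast))\le\lambda^0(\sB_\rho(x_\ast))-\alpha(\sup_{\mathcal{O}}c-\eps)$, where $\lambda^0$ is the principal eigenvalue with zero potential and is finite by Lemma~\ref{L3.3}. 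Therefore $\lambda_\alpha/\alpha\le\lambda^0(\sB_\rho(x_\ast))/\alpha-(\sup_{\mathcal{O}}c-\eps)$, so $\limsup_{\alpha\to\infty}\lambda_\alpha/\alpha\le-\sup_{\mathcal{O}}c+\eps$; letting $\eps\to0$ completes the proof.

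I do not anticipate a genuine obstacle: the result is a soft consequence of the structural properties of $\lambda_{\mathcal{O}}$. The two points that deserve a sentence of care are that the localising ball can be chosen compactly inside $\mathcal{O}$ while keeping $c$ within $\eps$ of its supremum (harmless, by continuity of $c$ and density of $\mathcal{O}$ in $\bar{\mathcal{O}}$ for a $\cC^1$ domain), and that $\varphi\equiv1$ is a legitimate supersolution also in the singular and normalised regimes $\gamma\in(0,2]$, which is precisely the content of the viscosity‑solution definition at a critical point of the test function.
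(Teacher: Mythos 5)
Your argument is correct and follows the same localization idea as the paper: show $\lambda_\alpha/\alpha\ge-\sup_{\mathcal{O}}c$ directly from the definition, and for the reverse inequality pass to a small ball $\sB\Subset\mathcal{O}$ on which $c>\sup_{\mathcal{O}}c-\eps$. The route you take for the upper bound differs slightly in which ingredients are invoked. The paper takes the principal eigenpair $(\tilde\lambda_{\sB},\varphi_{\sB})$ for $\sL$ on $\sB$ (Theorem~\ref{T3.1}) and verifies the supersolution inequality $\sL\varphi_{\sB}+\alpha c\,\varphi_{\sB}^{3-\gamma}\ge\alpha(\sup_{\mathcal{O}}c-2\eps)\varphi_{\sB}^{3-\gamma}$, then reads off the bound from the alternative characterization in Corollary~\ref{C3.1}. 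You instead use only structural identities that follow immediately from the definition of $\lambda_{\mathcal{O}}$: domain monotonicity, monotonicity in the potential, and the shift $\lambda_{\mathcal{O}}(c+k)=\lambda_{\mathcal{O}}(c)-k$, combined with finiteness of $\lambda^0(\sB)$ from Lemma~\ref{L3.3}. This is slightly lighter machinery — you never need the existence of an eigenfunction on $\sB$, nor Corollary~\ref{C3.1} — and gives the identical bound $\limsup\lambda_\alpha/\alpha\le-\sup_{\mathcal{O}}c+\eps$. Both proofs are sound; yours is marginally more self‑contained, while the paper's ties the lemma more tightly to the eigenvalue characterizations developed just before it. Your attention to the degenerate case $\grad\varphi=0$ for $\gamma\in(0,2]$ in the lower‑bound step is also warranted and handled correctly.
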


\begin{proof}
It is obvious from the definition that $\liminf_{\alpha\to\infty} \frac{\lambda_\alpha}{\alpha}\geq -\sup_{\mathcal{O}} c$. So we only prove the following
\begin{equation}\label{EL3.5A}
\limsup_{\alpha\to\infty} \frac{\lambda_\alpha}{\alpha}\leq -\sup_{\mathcal{O}} c.
\end{equation}
Pick $\varepsilon>0$ and consider a ball $\sB\Subset \mathcal{O}$ such that $c(x)> \sup_{\mathcal{O}} c -\varepsilon$ in $\bar\sB$. Let $(\tilde\lambda_\sB, \varphi_\sB)$ be the principal 
eigenpair in $\sB$ for the operator $\sL$. Then for large $\alpha$ it holds that
\begin{align*}
\sL\varphi_\sB + \alpha c(x) \varphi^{3-\gamma}_\sB 
& \geq -\tilde\lambda_\sB \varphi^{3-\gamma}_\sB + \alpha\, (\sup_{\mathcal{O}} c -\varepsilon) \varphi^{3-\gamma}_\sB
\\
&= (-\tilde\lambda_\sB+\alpha\varepsilon)\varphi^{3-\gamma}_\sB + \alpha\, (\sup_{\mathcal{O}} c -2\varepsilon) \varphi^{3-\gamma}_\sB
\\
&\geq \alpha\, (\sup_{\mathcal{O}} c -2\varepsilon) \varphi^{3-\gamma}_\sB.
\end{align*}
Hence, by Corollary~\ref{C3.1}, we find $\lambda_\alpha\leq \lambda_{\sB}(\alpha c) \leq -\alpha\, (\sup_{\mathcal{O}} c -2\varepsilon)$ which in turn, gives
$$\limsup_{\alpha\to\infty} \frac{\lambda_\alpha}{\alpha}\,\leq\, -\sup_{\mathcal{O}} c + 2\varepsilon.$$
Since $\varepsilon$ is arbitrary, we get \eqref{EL3.5A}. Hence, the proof is complete.
\end{proof}

Let us also prove a continuity property of $\lambda_\mathcal{O}$ with respect to decreasing domains.

\begin{lemma}\label{L3.6}
Let $\{\mathcal{O}_n\}$ be  an exhaustion of $\mathcal{O}$, i.e $\mathcal{O}_n$ is  smooth bounded domain with uniform radius of exterior sphere such that $\mathcal{O}_{n+1}  \Subset\mathcal{O}_n$ and $\bigcap_n\mathcal{O}_n=\mathcal{O}$. Then 
we have $\lambda_{\mathcal{O}_n}\to\lambda_\mathcal{O}$.
\end{lemma}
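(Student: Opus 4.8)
The plan is to prove the two inequalities $\limsup_n \lambda_{\mathcal{O}_n}\leq\lambda_{\mathcal{O}}$ and $\liminf_n\lambda_{\mathcal{O}_n}\geq\lambda_{\mathcal{O}}$ separately, using the monotonicity of the principal eigenvalue with respect to domain inclusion. Since $\mathcal{O}\subset\mathcal{O}_n$ for every $n$, the monotonicity (which follows directly from the definition: a positive supersolution on $\mathcal{O}_n$ restricts to one on $\mathcal{O}$) gives $\lambda_{\mathcal{O}_n}\leq\lambda_{\mathcal{O}}$ for all $n$, hence $\limsup_n\lambda_{\mathcal{O}_n}\leq\lambda_{\mathcal{O}}$. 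Moreover the sequence $\{\lambda_{\mathcal{O}_n}\}$ is nondecreasing because $\mathcal{O}_{n+1}\Subset\mathcal{O}_n$, and it is bounded above by $\lambda_{\mathcal{O}}$, so it converges to some limit $\lambda_\infty\leq\lambda_{\mathcal{O}}$. It remains to show $\lambda_\infty\geq\lambda_{\mathcal{O}}$.

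For the reverse inequality I would use Theorem~\ref{T3.1}: for each $n$ let $\varphi_n$ be the principal eigenfunction on $\mathcal{O}_n$, normalized so that $\norm{\varphi_n}_\infty=1$, solving $\sL\varphi_n + c(x)\varphi_n^{3-\gamma} + \lambda_{\mathcal{O}_n}\varphi_n^{3-\gamma}=0$ in $\mathcal{O}_n$ with $\varphi_n=0$ on $\partial\mathcal{O}_n$. The key point is to pass to the limit. Lemma~\ref{L2.1} gives interior local Lipschitz bounds on $\varphi_n$ that are uniform in $n$ (since $\norm{\varphi_n}_\infty=1$ and the coefficients are fixed and continuous on $\bar{\mathcal{O}}_1$), and the uniform exterior sphere condition together with Lemma~\ref{L3.1} gives a uniform modulus of continuity up to the boundary — this is exactly why the hypothesis on uniform radius of exterior sphere is imposed. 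By Arzel\`a--Ascoli, along a subsequence $\varphi_n\to\varphi$ locally uniformly on $\mathcal{O}$ (and uniformly on $\bar{\mathcal{O}}$ using the equicontinuity up to the boundary), and $\lambda_{\mathcal{O}_n}\to\lambda_\infty$. By stability of viscosity solutions under uniform convergence, $\varphi$ solves $\sL\varphi + c(x)\varphi^{3-\gamma}+\lambda_\infty\varphi^{3-\gamma}=0$ in $\mathcal{O}$ with $\varphi=0$ on $\partial\mathcal{O}$.

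The remaining step is to deduce $\lambda_\infty\geq\lambda_{\mathcal{O}}$ from the existence of this nonnegative solution $\varphi$. First, $\norm{\varphi}_\infty=1$: the uniform boundary estimate of Lemma~\ref{L3.1} prevents the mass from escaping to $\partial\mathcal{O}$, so the point where $\varphi_n$ attains its maximum stays in a fixed compact subset of $\mathcal{O}$, forcing $\norm{\varphi}_\infty=1$; in particular $\varphi\gneq 0$. Then by the strong maximum principle, Theorem~\ref{T2.2}, $\varphi>0$ in $\mathcal{O}$. Now $\varphi$ is a (nonnegative, nontrivial) solution of $\sL\varphi + c(x)\varphi_+^{3-\gamma}+\lambda_\infty\varphi_+^{3-\gamma}\geq 0$ in $\mathcal{O}$ with $\varphi\leq 0$ (in fact $=0$) on $\partial\mathcal{O}$, so by the characterization of $\lambda_{\mathcal{O}}$ in Corollary~\ref{C3.1} we get $\lambda_{\mathcal{O}}\leq\lambda_\infty$. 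Combined with $\lambda_\infty\leq\lambda_{\mathcal{O}}$ this gives $\lambda_\infty=\lambda_{\mathcal{O}}$, and since the full nondecreasing sequence converges, $\lambda_{\mathcal{O}_n}\to\lambda_{\mathcal{O}}$.

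The main obstacle I anticipate is making the limit passage rigorous near $\partial\mathcal{O}$: one needs the uniform (in $n$) Lipschitz/H\"older estimates on $\varphi_n$ that hold on $\mathcal{O}$ even though each $\varphi_n$ lives on the larger domain $\mathcal{O}_n$, and one needs to ensure $\varphi$ actually vanishes on $\partial\mathcal{O}$ rather than merely being nonnegative there. This is precisely where the hypotheses that the $\mathcal{O}_n$ have uniform exterior-sphere radius and $\bigcap_n\mathcal{O}_n=\mathcal{O}$ enter, via the barrier construction in Lemma~\ref{L3.1}; a minor additional care is the degenerate case $\gamma=2$ at points where the gradient of a test function vanishes, handled exactly as in the earlier proofs.
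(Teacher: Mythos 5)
Your proof is correct and takes essentially the same approach as the paper: extract the normalized principal eigenpairs $(\lambda_{\mathcal{O}_n},\varphi_n)$, use Lemma~\ref{L2.1} for interior Lipschitz bounds and Lemma~\ref{L3.1} (with the uniform exterior-sphere radius) for uniform boundary decay, pass to a uniform limit $\varphi$ solving the eigenvalue equation on $\mathcal{O}$ with $\varphi=0$ on $\partial\mathcal{O}$ and $\norm{\varphi}=1$, invoke Theorem~\ref{T2.2} for strict positivity, and conclude via Corollary~\ref{C3.1}. You spell out more carefully the monotonicity of $\lambda_{\mathcal{O}_n}$ and the argument that the maximum does not escape to $\partial\mathcal{O}$, both of which the paper leaves implicit, but the route is the same.
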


\begin{proof}
Let $(\lambda_{\mathcal{O}_n}, \varphi_n)$ be a principal eigenpair obtained in Theorem~\ref{T3.1}. Also, set $\norm{\varphi_n}=1$. Since all the domains has a uniform radius of  exterior spheres, 
the constant $C$ in Lemma~\ref{L3.1} can be chosen independent of the domains. Therefore, employing Lemma~\ref{L2.1} and Lemma~\ref{L3.1}, we can extract a subsequence of 
$\varphi_n$ converges uniformly to $\varphi$ with $\norm{\varphi}=1$, $\varphi\geq 0$ and 
$$\ginfdel \varphi + q(x)\cdot \grad \varphi \abs{\grad \varphi}^{2-\gamma} + c(x)\varphi^{3-\gamma} + \lambda\varphi^{3-\gamma}\,=0\quad \text{in}\; \mathcal{O}, \quad \text{and}\quad \varphi=0\quad \text{on}\; \partial\mathcal{O},$$
where $\lambda=\lim_{n\to\infty}\lambda_{\mathcal{O}_n}\leq \lambda_\mathcal{O}$. We also imply, by strong maximum principle (Theorem~\ref{T2.2}), that $\varphi>0$ in $\mathcal{O}$. It then follows from 
Corollary~\ref{C3.1} that
$\lambda=\lambda_\mathcal{O}$. This completes the proof.
\end{proof}


\section{Bounded positive solutions of $\sL u(x) + f(x, u)=0$}\label{S-liouv}
The main goal of the section is to prove Theorem~\ref{T4.1}  which
gives existence of a unique positive solution to $\sL u + f(x, u)=0$ in $\Rd$.
 Recall that 
$$\sL \varphi(x) \,=\,  \ginfdel \varphi(x) + q(x)\cdot \grad\varphi(x) \abs{\grad\varphi(x)}^{2-\gamma}.
$$
Throughout this section we make the following assumption on $q$.
\begin{itemize}
\item[(Q)] $q:\Rd\to\Rd$ is continuous and vanishing at infinity.
\end{itemize}
Let $f:\Rd\times[0, \infty)\to\R$ be a continuous function with the following properties:
\begin{enumerate}
\item[(A1)] $f:\Rd\times[0, \infty)\to \R$ is continuous and $f(x, 0)=0$. Also, $f(x, \cdot)$ is locally Lipschitz in $[0, \infty)$ uniformly with respect to $x$.
\item[(A2)]  For some $M>0$ we have $f(x, M)\leq 0$ in $\Rd$.
\item[(A3)] The limit $\ell(x)\df\lim_{s\to 0} \frac{f(x, s)}{s^{3-\gamma}}$ exists  uniformly with respect to  $x$. Moreover, $\ell(x)$ is continuous and satisfies 
\begin{equation}\label{EA3}
\liminf_{|x|\to\infty} \ell(x)\,> \,0\,.
\end{equation}
\item[(A4)] For some constant $M_1\in (0, M]$ we have the following: for any $\delta\in (0, M_1)$
\begin{equation}\label{EA4A}
\liminf_{\abs{x}\to \infty} \inf_{s\in(0, M_1-\delta)}\frac{f(x, s)}{s^{3-\gamma}}>0, \quad \text{and}\quad \limsup_{\abs{x}\to\infty} \sup_{s\in(M_1+\delta, 2M)}\frac{f(x, s)}{s^{3-\gamma}} < 0\,.
\end{equation}
\item[(A5)] (Strict monotonicity) For any $\kappa_1>\kappa_2>0$ it holds that
$$\inf_{x\in\Rd} \left(\frac{f(x, \kappa_2)}{\kappa^{3-\gamma}_2}-\frac{f(x, \kappa_1)}{\kappa^{3-\gamma}_1}\right)>0.$$
\end{enumerate}
Condition (A1)--(A3) would be used to establish existence of a positive solution whereas (A4)-(A5) would be used to prove uniqueness of positive solution. Conditions (A4) would be
useful to find asymptotic of the positive solutions at infinity. 

We also consider another class of $f$ satisfying:
\begin{itemize}
\item[(B1)] $f:\Rd\times[0, \infty)\to \R$ is continuous and $f(x, 0)=0$. Also, $f(x, \cdot)$ is locally Lipschitz in $(0, \infty)$ uniformly with respect to $x$.
\item[(B2)]  For some $M>0$ we have $f(x, M)\leq 0$ in $\Rd$.
\item[(B3)] For some $\alpha\in (0, 3-\gamma]$,
the limit $\ell(x)\df\lim_{s\to 0} \frac{f(x, s)}{s^\alpha}$ exists and convergence is uniform in $x$. In particular, $\ell(x)$ is continuous. Moreover, $\inf_{\Rd}\ell(x)>0$.
\item[(B4)] For some constant $M_1\in (0, M]$ we have the following: for any $\delta\in (0, M_1)$ we have
\begin{equation}\label{EB4A}
\liminf_{\abs{x}\to \infty} \inf_{s\in(0, M_1-\delta)}\frac{f(x, s)}{s^\alpha}>0, \quad \text{and}\quad \limsup_{\abs{x}\to\infty} \sup_{s\in(M_1+\delta, 2M)}\frac{f(x, s)}{s^\alpha} < 0\,,
\end{equation}
\item[(B5)] (Strict monotonicity) For any $\kappa_1>\kappa_2>0$ it holds that
$$\inf_{x\in\Rd} \left(\frac{f(x, \kappa_2)}{\kappa^{3-\gamma}_2}-\frac{f(x, \kappa_1)}{\kappa^{3-\gamma}_1}\right)>0.$$
\end{itemize}

A typical example of such $f$ would be $f(x, s)=s^\alpha(a(x)-b(x) s^{4-\alpha})$ where $a, b$ are bounded, positive continuous functions and
$\lim_{\abs{x}\to \infty} a(x)=1=\lim_{\abs{x}\to \infty} b(x)$, or $f(x, s)=a(x) s^\alpha(1-s^{4-\alpha})$ for some positive $a$. The readers may observe that
(B3) implies (A3) with $\alpha=3-\gamma$ and (B1) is weaker than (A1).

The nonlinearity  $f$ is often referred  as the Fisher-KPP (for Kolmogorov, Petrovsky and Piskunov) type nonlinearity.  This problem is closely related to the one studied in \cite{BHN,BHR}. The authors in \cite{BHN, BHR} considered the equation 
$$\trace(a(x)D^2 u(x)) + q(x)\cdot\grad u(x) + f(x, u)=0\quad \text{in}\; \Rd,$$
for a Fisher-KPP  type nonlinearity $f$ and established existence and uniqueness of positive solution. One
of the key assumptions imposed on the coefficients is
$$\liminf_{|x|\to\infty}\, (4\alpha(x) f_s(x, 0)-|q(x)|^2)\,>\,0\,,$$
where $\alpha(x)$ denotes the smallest eigenvalue of $a(x)$. This condition plays a key role in the
construction of a suitable subsolution \cite[Lemma~3.1]{BHR}.
 Since we are dealing with a degenerate and nonlinear
operator an analogous condition for the current problem should not be the same as above. However, in that spirit, it is also interesting to ask that if the equation becomes degenerate, it also forces $q$ to vanish. In particular, if $\alpha(x)$ tends to $0$ at infinity,
we have $|q(x)|\to 0$ at infinity. This leads to our hypothesis (Q) above.
 We show in
Proposition~\ref{P4.1} that condition (Q) is sufficient to construct a subsolution suitable for our
purpose.

Our main result of this section is the following
\begin{theorem}\label{T4.1}
Under (A1)--(A5) or (B1)--(B5) there exists a unique bounded, positive solution  to
$$\sL u + f(x, u)=0\quad\quad\textrm{in $\Rd$.}$$ 
\end{theorem}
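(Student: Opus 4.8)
The plan is to prove existence and uniqueness separately, both resting on the comparison machinery of Section~\ref{S-max}, the principal--eigenvalue theory of Section~\ref{S-eigen}, and the bounded positive subsolution produced by Proposition~\ref{P4.1} from hypothesis~(Q) together with (A3), resp.\ (B3). For existence I would exhaust $\Rd$ by balls $\sB_n$. The constant $M$ is a smooth supersolution of $\sL w+f(x,w)=0$ by (A2), resp.\ (B2), since $\sL M=0$ and $f(x,M)\le0$. For a positive subsolution $\underline u\le M$ I would argue as in Proposition~\ref{P4.1}: by (A3), resp.\ (B3), for each small $\delta$ there is $s_0>0$ with $f(x,s)\ge(\ell(x)-\delta)s^{3-\gamma}$, resp.\ $f(x,s)\ge(\ell(x)-\delta)s^{\alpha}$, for $0<s\le s_0$ uniformly in $x$; using $\liminf_{|x|\to\infty}\ell>0$, resp.\ $\inf_{\Rd}\ell>0$, the decay of $q$, the scaling of $\ginfdel$, the domain monotonicity of $\lambda_{\mathcal{O}}$, and the fact --- immediate from Corollary~\ref{C3.1} --- that adding a positive constant potential $m$ lowers the principal eigenvalue by at least $m$, one finds a ball (placed far from the origin when $\liminf_{|x|\to\infty}\ell$ is the only positivity available) on which the principal eigenvalue of $\sL+(\ell-\delta)(\cdot)^{3-\gamma}$, resp.\ of $\sL+(\ell-\delta)(\cdot)^{\alpha}$, is negative; scaling down the eigenfunction of Theorem~\ref{T3.1} into the range $\{s\le s_0\}$ and using the $(3-\gamma)$--homogeneity of $\sL$ gives $\underline u$, which I extend by $0$ to $\Rd$. (When $\alpha<3-\gamma$ in case~(B) a fixed smooth bump already works, since near $0$ the term $f(x,s)\gtrsim s^{\alpha}$ dominates $\sL(\varepsilon\varphi)=\varepsilon^{3-\gamma}\sL\varphi$.) On each $\sB_n$, Perron's method as in the proof of Theorem~\ref{T2.3} --- which extends verbatim to the nonlinearity $f$, locally Lipschitz in $s$ by (A1)/(B1) --- produces the minimal solution $v_n$ in the order interval $[\underline u,M]$; since $v_{n+1}|_{\sB_n}$ is again such a solution, minimality gives $v_n\le v_{n+1}$, so $v_n\uparrow v$. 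By Lemma~\ref{L2.1} the $v_n$ are locally equi--Lipschitz, hence $v_n\to v$ locally uniformly, $v$ is a bounded solution with $0\le v\le M$ and $v\ge\underline u\gneq0$, and Theorem~\ref{T2.2} (applied on each ball, noting that $v$ is a supersolution of $\sL v+c(x)v^{3-\gamma}=0$ with the continuous potential $c(x)=\inf_{0<s\le M}f(x,s)/s^{3-\gamma}$, finite by (A3)/(B3)) upgrades this to $v>0$.

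For uniqueness, let $u,v$ be two bounded positive solutions. I would first show that every bounded positive solution lies in $(0,M]$ and satisfies $\lim_{|x|\to\infty}u(x)=M_1$. The bound $u\le M$ and the estimate $\limsup_{|x|\to\infty}u\le M_1$ I would get by comparing $u$ (a subsolution) with the constant supersolution $M$ and, on large exterior annuli, with the smooth strict supersolutions $w_\delta(x)=\min\{M,\ M_1+\delta+A|x|^{-\eta}\}$: for $\eta>0$ small, $\sL w_\delta\to0$ at infinity (decay of $q$, homogeneity of $\ginfdel$) while $f(x,w_\delta)\le-\varepsilon_0<0$ there by (A4)/(B4), and since $w_\delta$ is smooth a positive interior maximum of $u-w_\delta$ would let $w_\delta$ plus a constant touch $u$ from above, violating the strict supersolution inequality; the exhaustion--and--limit device of the proof of Theorem~\ref{T2.6} then gives $u\le w_\delta$, whence $\limsup u\le M_1+\delta$. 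For the matching lower bound I would run the complementary argument: by (A4)/(B4), $f(x,s)\ge\varepsilon_0 s^{3-\gamma}$ for $0<s<M_1-\delta$ and $|x|$ large, so on a large ball $\sB_\rho(y)$ with $|y|$ large the principal eigenfunction $\psi$ of $\sL+\varepsilon_0(\cdot)^{3-\gamma}$ has negative eigenvalue; if $\varepsilon^{*}$ is the largest factor with $\varepsilon^{*}\psi\le u$ on $\sB_\rho(y)$, then the contact occurs at an interior point (since $\psi=0<u$ on the boundary), and the doubling argument of Theorem~\ref{T2.1} there --- where $\varepsilon^{*}\psi-u$ attains the value $0$, so the $f$--terms cancel in the limit --- forces $\varepsilon^{*}\max\psi\ge M_1-\delta$; letting $\rho\to\infty$ and $\delta\to0$ gives $\liminf_{|x|\to\infty}u\ge M_1$. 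Hence $u,v\to M_1$ at infinity, and in particular $0<\inf_{\Rd}u\le\sup_{\Rd}u<\infty$, and likewise for $v$.

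The uniqueness is then concluded by sliding. By the $(3-\gamma)$--homogeneity of $\sL$,
\[
\sL(tv)+f(x,tv)=(tv)^{3-\gamma}\Bigl(\frac{f(x,tv)}{(tv)^{3-\gamma}}-\frac{f(x,v)}{v^{3-\gamma}}\Bigr)\le0\qquad\text{for }t\ge1
\]
by (A5)/(B5), and since $0<\inf_{\Rd}v\le\sup_{\Rd}v<\infty$ the right--hand side is bounded above by a strictly negative constant (locally) once $t>1$, so $tv$ is then a strict supersolution. As $\inf v>0$ and $\sup u<\infty$ we have $tv\ge u$ on $\Rd$ for $t$ large, so $t^{*}:=\inf\{t\ge1:\ tv\ge u\ \text{on}\ \Rd\}$ is well defined; if $t^{*}>1$, then $t^{*}v-u\ge0$ with $\liminf_{|x|\to\infty}(t^{*}v-u)=(t^{*}-1)M_1>0$, so $t^{*}v$ touches $u$ from above at an interior point, where the doubling argument of Theorem~\ref{T2.1} --- now with $\max(u-t^{*}v)=0$, so the $f$--contributions cancel --- contradicts the strict supersolution slack of $t^{*}v$. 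Hence $t^{*}=1$, i.e.\ $v\ge u$; by symmetry $u\ge v$, so $u=v$.

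I expect the main obstacle to be exactly the interaction of the degeneracy of $\ginfdel$ with the genuinely KPP (non--monotone) nonlinearity $f$: there is no unrestricted comparison principle for $\sL w+f(x,w)=0$, so each comparison has to be arranged either against a smooth strict barrier or so that, thanks to the $(3-\gamma)$--homogeneity of $\sL$ and the strict monotonicity (A5)/(B5), the $f$--contributions cancel at the contact point. Within this, the most delicate single point is the description of the solutions at infinity --- in particular the lower bound $\liminf_{|x|\to\infty}u\ge M_1$ --- where the loss of translation invariance due to $q$ rules out reduction to a single limiting equation and forces the eigenvalue estimates of Section~\ref{S-eigen} to be carried out on balls receding to infinity.
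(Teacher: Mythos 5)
Your proposal mirrors the paper's proof essentially step by step: existence by monotone iteration on exhausting balls squeezed between the constant supersolution $M$ and a compactly supported bump subsolution built from Proposition~\ref{P4.1}; positivity from the strong maximum principle; asymptotics at infinity via sliding a principal eigenfunction on far-away balls for the lower bound (as in Lemma~\ref{L4.3}) and a smooth strict supersolution for the upper bound (as in Lemma~\ref{L4.4}, though your decaying barrier $M_1+\delta+A|x|^{-\eta}$ is a cosmetic variant of the paper's truncated paraboloid $\beta+\theta_r$); and uniqueness via the rescaling $u\mapsto t u$ exploiting $(3-\gamma)$-homogeneity and (A5)/(B5) together with the doubling argument at a touching point, exactly as in Lemma~\ref{L4.5}. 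The one technical point you pass over is in case (B): hypothesis (B1) only gives Lipschitz continuity of $f(x,\cdot)$ on $(0,\infty)$, not up to $s=0$, so the Perron/iteration scheme does not extend \emph{verbatim} --- the iteration needs $\sigma>\mathrm{Lip}(f)$ on $[0,M]$, and the ``minimal solution'' comparison $v_n\le v_{n+1}|_{\sB_n}$ needs linearizability near $s=0$ where $v_n$ vanishes on $\partial\sB_n$. The paper handles this by first running the argument for the shifted nonlinearity $f_\varepsilon(x,s)=f(x,\varepsilon+s)$, which is Lipschitz on $[0,\infty)$, and then letting $\varepsilon\to0$ with the help of the uniform local Lipschitz bound from Lemma~\ref{L2.1}; that regularization step needs to be inserted in your argument.
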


\begin{remark}
It is not hard to see that (A3) is crucial for the existence of non-trivial non-negative solutions.
For instance, suppose that $f(x, s):\Rd\times\R \to\R$ is such that 
$$s:(0, \infty)\mapsto \frac{f(x, s)}{s^3}\quad \text{is strictly decreasing, for every}\; x,$$
and $\lim_{s\to 0+} \frac{f(x, s)}{s^3} \leq 0$ for every $x$. It then follows that $f(x, s)\leq 0$ for
$s\geq 0$. Thus for any bounded, non-negative solution $u$ of $\infdel u + f(x, u)=0$ must satisfy
$\infdel u\geq 0$ and therefore, by Liouville property \cite{TB01, BD01}, we get $u$ to be constant. This also implies
$f(x, u)=0$ and hence, $u=0$.
\end{remark}

Let us also  mention the non-existence result. The condition \eqref{ET4.2A} below implies
that $\lambda_{\Rd}(\sL+\ell)\geq 0$ and therefore, is consistent with \cite[Proposition~6.1]{BHR}.
Also, condition (Q) is not imposed in the theorem below.

\begin{theorem}\label{T4.2}
Suppose that for some positive continuous function $\Lyap$ with $\inf_{\Rd}\Lyap>0$
 we have
\begin{equation}\label{ET4.2A}
\sL\Lyap + \ell(x)\Lyap^{3-\gamma}\;\leq\; 0\quad \text{in}\; \Rd\,,
\end{equation}
where $\ell(x)=\lim_{s\to 0}\frac{f(x, s)}{s^{3-\gamma}}$, uniformly in $x$.
Furthermore, assume that $(0, \infty)\ni s\mapsto \frac{f(x, s)}{s^{3-\gamma}}$ is strictly decreasing
for every $x\in\Rd$ and for any $0<\delta_1\leq\delta_2$ we have
\begin{equation}\label{ET4.2B}
\limsup_{\abs{x}\to\infty} \sup_{s\in[\delta_1, \delta_2]}\, f(x, s)< 0\,.
\end{equation}
Then any bounded, non-negative solution to $\sL u + f(x, u)=0$ 
must be $0$.
\end{theorem}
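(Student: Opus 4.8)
The plan is to argue by contradiction: assume $u$ is a bounded non-negative solution of $\sL u + f(x,u)=0$ with $u\not\equiv 0$, and derive a contradiction. I would first record three facts. Writing $N=\norm{u}_{L^\infty(\Rd)}$ and using that $s\mapsto f(x,s)/s^{3-\gamma}$ is decreasing, one has $f(x,u(x))\geq \tfrac{f(x,N)}{N^{3-\gamma}}\,u(x)^{3-\gamma}$ whenever $0<u(x)\leq N$ (and the inequality is trivial where $u=0$ since $f(x,0)=0$), so $u$ is a viscosity supersolution of $\sL w+c_0(x)w^{3-\gamma}=0$ in $\Rd$ with the continuous coefficient $c_0(x)=f(x,N)/N^{3-\gamma}$; the strong maximum principle (Theorem~\ref{T2.2}), applied on balls, then gives $u>0$ in $\Rd$. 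Next, Lemma~\ref{L2.1} shows $u$ is locally Lipschitz with a constant uniform over $\Rd$, and applying the same lemma to $-\Lyap$ on balls (rewriting \eqref{ET4.2A} as an inequality of the form $\ginfdel(-\Lyap)\geq -\theta_1\abs{\grad(-\Lyap)}^{3-\gamma}-\theta_3$ locally, using oddness of $\ginfdel$ and that $q,\ell,\Lyap$ are bounded on each ball) shows $\Lyap$ is locally Lipschitz as well. Finally, since $\ell(x)=\lim_{s\to 0^+}f(x,s)/s^{3-\gamma}=\sup_{s>0}f(x,s)/s^{3-\gamma}$, we have $f(x,s)\leq \ell(x)s^{3-\gamma}$ for all $s>0$, whence $u$ is a subsolution of $\sL w+\ell(x)w^{3-\gamma}=0$, and by $(3-\gamma)$-homogeneity of $\sL$ together with \eqref{ET4.2A}, $\kappa\Lyap$ is a supersolution of the same equation for every $\kappa>0$.

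The core step, which I expect to be the main obstacle, is to show $u(x)\to 0$ as $\abs{x}\to\infty$; this is where \eqref{ET4.2B} is used. Suppose not, so $\limsup_{\abs{x}\to\infty}u(x)=2m>0$. Apply \eqref{ET4.2B} with $[\delta_1,\delta_2]=[m,N]$ to get $R_0\geq 1$ and $\eta>0$ with $f(x,s)\leq -\eta$ for $\abs{x}\geq R_0$, $s\in[m,N]$. On the open set $E=\{x:\abs{x}>R_0,\ u(x)>m\}$ the coefficient $c(x):=f(x,u(x))/u(x)^{3-\gamma}$ satisfies $c(x)\leq -\eta/N^{3-\gamma}<0$, and $u$ solves $\sL u+c(x)u^{3-\gamma}=0$ there. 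I would then compare $u$, on each bounded piece $E\cap\sB_\rho$, with a barrier $\psi=m+(N-m)\phi(\abs{x})$ with $\phi$ decreasing, $\phi\equiv 1$ near $\{\abs{x}=R_0\}$ and $\phi\to 0$, via Theorem~\ref{T2.1}: the uniformly negative zeroth-order contribution $c(x)\psi^{3-\gamma}\leq -\tfrac{\eta}{N^{3-\gamma}}m^{3-\gamma}$ is exactly what makes $\psi$ a supersolution even though $q$ is merely continuous here (the cutoff $\phi$ may need to be built in finitely many annular stages so that on each stage the bounded term $q\cdot\grad\psi\abs{\grad\psi}^{2-\gamma}$ is dominated by that negative term). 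Letting $\rho\to\infty$ forces $\limsup_{\abs{x}\to\infty}u\leq m$, a contradiction. This is a Phragmén--Lindelöf/Liouville-type statement on exterior domains for the degenerate operator $\sL$, and unlike Theorem~\ref{T2.5} it cannot rely on any smallness of the gradient term since (Q) is not assumed; \eqref{ET4.2B} is precisely the substitute, producing the needed strictly negative coefficient on $\{u>m\}$ near infinity.

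With $u\to 0$ at infinity in hand, the endgame is a sliding argument. Since $u>0$ is bounded, $\inf_{\Rd}\Lyap>0$ and $u/\Lyap\to 0$ at infinity, the number $\kappa^\ast:=\sup_{\Rd}u/\Lyap$ is finite, strictly positive, and \emph{attained} at some $x_0\in\Rd$; replacing $\Lyap$ by $\kappa^\ast\Lyap$ (still satisfying \eqref{ET4.2A} by homogeneity) we may assume $\Lyap\geq u$ on $\Rd$ with $\Lyap(x_0)=u(x_0)>0$. Choosing $\rho>0$ with $\Lyap\geq \tfrac12\Lyap(x_0)$ on $\overline{\sB_\rho(x_0)}$, \emph{strict} monotonicity gives $f(x,\Lyap(x))/\Lyap(x)^{3-\gamma}<\ell(x)$ pointwise there, hence by continuity $\ell(x)-f(x,\Lyap(x))/\Lyap(x)^{3-\gamma}\geq \mu_0>0$ on $\overline{\sB_\rho(x_0)}$; combined with \eqref{ET4.2A} this yields
$$\sL\Lyap+f(x,\Lyap)\ \leq\ \Lyap^{3-\gamma}\Bigl(\tfrac{f(x,\Lyap)}{\Lyap^{3-\gamma}}-\ell(x)\Bigr)\ \leq\ -\mu_0\bigl(\tfrac12\Lyap(x_0)\bigr)^{3-\gamma}\ <\ 0\qquad\text{on }\sB_\rho(x_0),$$
so $\Lyap$ is a \emph{strict} supersolution of $\sL w+f(x,w)=0$ on $\sB_\rho(x_0)$ that lies above the solution $u$ and touches it at the interior point $x_0$. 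Since $\Lyap$ and $u$ are both locally Lipschitz, this is ruled out by the doubling-of-variables argument from the proof of Theorem~\ref{T2.1} (carried out on $\sB_\rho(x_0)$, as in Theorem~\ref{T2.6}), with the strict gap above absorbing the error terms coming from $q$ and from a small quadratic localization penalty. This contradiction shows $u\equiv 0$, completing the proof.
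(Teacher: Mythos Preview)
Your overall strategy mirrors the paper's: positivity of $u$ via the strong maximum principle, decay $u\to 0$ at infinity from \eqref{ET4.2B}, then a sliding comparison of $u$ against $\kappa\Lyap$ concluded by doubling of variables. Your endgame is a slight reorganization of the paper's argument (the paper first scales $u$ to $u_\kappa=\kappa u$, shows $u_\kappa\leq\Lyap$ by sliding, and then rescales $\Lyap\mapsto\mu\Lyap$ with $\mu\to 0$; you instead locate the optimal $\kappa^*$ so that $\kappa^*\Lyap$ touches $u$ and extract the contradiction directly from strict monotonicity at the contact point, exactly as in Lemma~\ref{L4.5}). Both versions are valid, and yours is marginally more direct.

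The genuine gap is in your ``core step''. Your barrier $\psi=m+(N-m)\phi(\abs{x})$ with $\phi\to 0$ cannot be compared to $u$ on $E\cap\sB_\rho$ via Theorem~\ref{T2.1}, because on the outer boundary piece $\partial\sB_\rho\cap\overline{E}$ one has $u\geq m$ by the very definition of $E$, whereas $\psi=m+(N-m)\phi(\rho)\to m$ as $\rho\to\infty$; so the required boundary inequality $\psi\geq u$ fails there and the comparison does not go through. Your ``finitely many annular stages'' are aimed at controlling the $q$-term, not at this obstruction, and the same outer-boundary issue recurs at every stage. The paper obtains the decay by the \emph{local} touching argument of Lemma~\ref{L4.4}: fix $\delta>0$; for any $x_0$ with $\abs{x_0}$ large and $u(x_0)>\delta$, slide the paraboloid $v=\beta+\theta_r(\cdot-x_0)$, $\theta_r(y)=\abs{y/r}^2-1$, down in $\beta$ until it touches $u$ from above \emph{inside} $\sB_r(x_0)$ (this is guaranteed since $v=\beta>\sup u$ on $\partial\sB_r(x_0)$ for the relevant range of $\beta$). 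At the contact point $v=u\in[\delta,\norm{u}_\infty]$, so \eqref{ET4.2B} gives $f(\cdot,v)\leq-\eta<0$, while $\ginfdel\theta_r+q\cdot\grad\theta_r\abs{\grad\theta_r}^{2-\gamma}$ is of order $r^{-(4-\gamma)}+\norm{q}_{L^\infty(\sB_r(x_0))}\,r^{-(3-\gamma)}$ and is made smaller than $\eta$ by choosing $r$ large. The doubling argument then yields a contradiction with no exterior-domain barrier needed; since $\delta>0$ was arbitrary, $u\to 0$ at infinity follows.
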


\begin{remark}
If $\limsup_{|x|\to\infty}\ell(x)<0$ and $s\mapsto \frac{f(x, s)}{s^{3-\gamma}}$ is decreasing
for every $x\in\Rd$, then we have \eqref{ET4.2B}. In particular, let $\ell(x)\leq-\varepsilon$ for $|x|\geq R_\varepsilon$
for some $\varepsilon, R_\varepsilon>0$. For $0<\delta_1\leq\delta_2$, it then follows  that
\begin{align*}
\sup_{|x|\geq R_\varepsilon}\; \sup_{s\in [\delta_1, \delta_2]} \frac{f(x, s)}{s^{3-\gamma}}
&\leq\, \sup_{|x|\geq R_\varepsilon}\; \left(\sup_{s\in [\delta_1, \delta_2]} \frac{f(x, s)}{s^{3-\gamma}}-\frac{f(x, \delta_1)}{\delta_1^{3-\gamma}}\right)
+ \sup_{|x|\geq R_\varepsilon}\; \frac{f(x, \delta_1)}{\delta_1^{3-\gamma}}
\\
&\leq \, \sup_{|x|\geq R_\varepsilon}\; \frac{f(x, \delta_1)}{\delta_1^{3-\gamma}}
\\
&\leq \, \sup_{|x|\geq R_\varepsilon}\ell(x)\leq -\varepsilon.
\end{align*}
This gives \eqref{ET4.2B}.
\end{remark}
A typical example of $f$ satisfying \eqref{ET4.2B} is $f(x, s)=s^{3-\gamma}(a(x)-b(x)s)$
with $\lim_{|x|\to\infty} a(x)<0$ and $b$ is positive, vanishing at infinity.
The following example gives existence of $\Lyap$ satisfying \eqref{ET4.2A}.
\begin{example}
(a) For $\ell\leq 0$ we may take $\Lyap=1$.\\
(b) Suppose that $q(x)\cdot x\leq -\kappa|x|$ for $|x|\geq\epsilon>0$ and some $\kappa>0$.
Take $\Lyap(x) = e^{\delta \theta(x)}$ where $\theta$ is a non-negative $\cC^2$ function satisfying
$\theta(x)=|x|$ for $|x|\geq \epsilon$. Also, we let $\theta$ to attain its minimum only at $0$ and $\grad\theta(x)\neq 0$ for $x\neq 0$.
Then a direct calculation gives us,
for $x\neq 0$, that
\begin{align*}
\sL \Lyap 
= e^{\delta(3-\gamma) \theta(x)}\left[\delta^{3-\gamma}\abs{\grad\theta}^{-\gamma} \infdel\theta + 
\delta^{4-\gamma} \abs{\grad\theta}^{4-\gamma}+
\delta^{3-\gamma} q(x)\cdot\grad\theta \abs{\grad\theta}^{2-\gamma} \right].
\end{align*}
In $\sB_\epsilon^c$, we have
\begin{align*}
\sL \Lyap 
 & = \Lyap^{3-\gamma}(x)\left[\delta^{4-\gamma} + \delta^{3-\gamma} q(x)\cdot x |x|^{-1}\right]
 \\
 &\leq \Lyap^{3-\gamma}(x)\left[\delta^{4-\gamma} -\kappa \delta^{3-\gamma}\right].
\end{align*}
Fixing $\delta=\kappa/2$ it follows from above that
$$\sL\Lyap\,\leq\, \left(\Theta_\epsilon \Ind_{\sB_\epsilon}(x)-\delta^{4-\gamma}\right)\Lyap^{3-\gamma},$$
where
$$\Theta_\epsilon = \sup_{\sB_\epsilon} \left[\delta^{3-\gamma}\abs{\grad\theta}^{-\gamma} \infdel\theta + \delta^{4-\gamma} \abs{\grad\theta}^{4-\gamma}+
\delta^{3-\gamma} q(x)\cdot\grad\theta \abs{\grad\theta}^{2-\gamma} \right].$$
Thus, if we have $\ell$ satisfying 
$$\ell\leq -\Theta_\epsilon \Ind_{\sB_\epsilon}(x)+\delta^{4-\gamma}$$
in $\Rd$, we have \eqref{ET4.2A}.
\end{example}

The remaining part of this section is devoted to the proofs of Theorem~\ref{T4.1} and ~\ref{T4.2}.
We start by constructing a test function which would play a key role in our analysis.

\begin{proposition}\label{P4.1}
Assume (Q) above.  Then for every $\delta>0$ there exists 
$R_1, R_2>0$ and a smooth function $\psi\gneq 0$ such that for any point $\abs{x_0}\geq R_1$ we have
$$\sL\phi^{x_0} + \delta (\phi^{x_0})^{3-\gamma}> 0\quad \text{in}\; \sB_{R_2}(x_0),\; \phi^{x_0}>0\;\; \text{in}\; \sB_{R_2}(x_0), \quad \text{and}\quad \phi^{x_0}=0\quad \text{on}\; \partial \sB_{R_2}(x_0),$$
where $\phi^{x_0}(\cdot)=\psi(\cdot-x_0)$. In particular, we have 
$$\lambda_{\sB_{R_2}(x_0)}(\sL)\; \leq\; \delta.$$
\end{proposition}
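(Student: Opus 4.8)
The plan is to use hypothesis~(Q) to reduce the claim to a $q$-free differential inequality on a fixed ball, and then to build an explicit radial cap on that ball. Since $\ginfdel$ and the gradient are translation invariant, for $\phi^{x_0}=\psi(\cdot-x_0)$ and $y=x-x_0$ we have
$$\sL\phi^{x_0}(x)+\delta\,(\phi^{x_0})^{3-\gamma}(x)=\ginfdel\psi(y)+q(x_0+y)\cdot\grad\psi(y)\,\abs{\grad\psi(y)}^{2-\gamma}+\delta\,\psi(y)^{3-\gamma},$$
and the middle term is bounded below by $-\norm{q}_{L^\infty(\sB_{R_2}(x_0))}\,\abs{\grad\psi(y)}^{3-\gamma}$. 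By (Q), given any $R_2>0$ and $\eps>0$ there is $R_1$ with $\norm{q}_{L^\infty(\sB_{R_2}(x_0))}\le\eps$ for all $\abs{x_0}\ge R_1$. So it suffices to produce, for a suitably small $\eps$, a smooth $\psi$ with $\psi>0$ in $\sB_{R_2}$ and $\psi=0$ on $\partial\sB_{R_2}$ such that
$$\ginfdel\psi-\eps\,\abs{\grad\psi}^{3-\gamma}+\delta\,\psi^{3-\gamma}>0\qquad\text{in }\sB_{R_2}.$$

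For $\psi$ I would take the Gaussian cap $\psi(x)=e^{-k\abs{x}^2}-e^{-kR_2^2}$ already used in Lemma~\ref{L3.3}, but now with $k$ \emph{small}. Writing $t=\abs{x}^2$ one computes $\abs{\grad\psi}=2k\sqrt{t}\,e^{-kt}$ and
$$\ginfdel\psi=c_\gamma\,k^{3-\gamma}\,e^{-(3-\gamma)kt}\,t^{1-\gamma/2}\,(2kt-1)$$
for an explicit $c_\gamma>0$; thus $\ginfdel\psi>0$ exactly on $\{2kt>1\}$, while near the centre $\psi$ is a concave cap with $\grad\psi(0)=0$. I would then verify the inequality on two regions. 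On the outer region $\{2kt\ge2\}$ — which contains a neighbourhood of $\partial\sB_{R_2}$ once $R_2\ge(2/k)^{1/2}$ — the factor $2kt-1$ is $\ge1$ and, since $t\le R_2^2$, the positive term $\ginfdel\psi$ dominates $\eps\,\abs{\grad\psi}^{3-\gamma}$ as soon as $\eps$ is small relative to $1/R_2$, so the inequality holds there even after discarding the non-negative $\delta$-term. On the inner region $\{2kt\le2\}$, provided $kR_2^2\ge2$ one has $\psi\ge e^{-1}-e^{-2}>0$, hence $\delta\,\psi^{3-\gamma}$ is bounded below by a positive constant depending only on $\delta$ and $\gamma$, while $\abs{\ginfdel\psi}$ is of order $k^{2-\gamma/2}$ and $\eps\,\abs{\grad\psi}^{3-\gamma}$ of order $\eps\,k^{(3-\gamma)/2}$, both as small as we wish. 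So the parameters get fixed in the order $k\to R_2\to\eps\to R_1$: first $k$ small enough that the $k^{2-\gamma/2}$ term lies below half the inner-region lower bound (and, when $\gamma=2$, also $2k<\delta(1-e^{-2})$); then $R_2\ge(2/k)^{1/2}$; then $\eps$ small for both estimates; then $R_1$ from (Q). For $\gamma\in[0,2)$ the only point with $\grad\psi=0$ is the centre, where $\ginfdel\psi$ vanishes and the inequality reduces to $\delta\,\psi^{3-\gamma}(0)>0$, as in Lemma~\ref{L3.3}; for $\gamma=2$ the centre is handled, exactly as in Example~\ref{E2.1}, through part~(iii) of the definition of viscosity solution, $M(D^2\psi(0))+\delta\,\psi^{3-\gamma}(0)=-2k+\delta(1-e^{-kR_2^2})>0$. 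Finally, with $\phi^{x_0}=\psi(\cdot-x_0)$, the bound $\lambda_{\sB_{R_2}(x_0)}(\sL)\le\delta$ is immediate from the characterization of the principal eigenvalue in Corollary~\ref{C3.1} (taken with $c\equiv0$), applied to the competitor $\phi^{x_0}$ with $\lambda=\delta$, since $\phi^{x_0}\ge0$ is not identically zero, vanishes on $\partial\sB_{R_2}(x_0)$, and satisfies $\sL\phi^{x_0}+\delta\,(\phi^{x_0})^{3-\gamma}>0$ with $(\phi^{x_0})_+=\phi^{x_0}$.

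The step I expect to be the main obstacle is controlling the behaviour at the centre of the ball. There $\ginfdel\psi$ is negative — $\psi$ looks like a concave cap near its maximum and its gradient degenerates — so only the zeroth-order term $\delta\,\psi^{3-\gamma}$ can rescue the inequality, which forces the Gaussian width $k$ to be small; but then, in order to keep $\ginfdel\psi$ positive and large enough near $\partial\sB_{R_2}$ to absorb the gradient error $\eps\,\abs{\grad\psi}^{3-\gamma}$, one is forced to take $R_2$ large and $\eps$ (hence the decay scale of $q$) small. Closing this loop of competing requirements in a consistent order — and disposing of the borderline case $\gamma=2$ via part~(iii) of the viscosity definition — is the heart of the matter; everything else is the routine bookkeeping of the two-region estimate sketched above.
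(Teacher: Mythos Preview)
Your argument is correct and structurally identical to the paper's: use (Q) to replace the drift by a small constant, build a radial cap vanishing on the boundary of a ball, split into an inner region where the zeroth-order term $\delta\psi^{3-\gamma}$ wins and an outer region where $\ginfdel\psi>0$ absorbs the drift error, and read off the eigenvalue bound from Corollary~\ref{C3.1}. The difference lies only in the choice of profile. The paper takes the compactly supported bump $\psi(x)=\exp\bigl(-1/(1-\abs{\eps x}^2)\bigr)$ on $\sB_{\eps^{-1}}$ and does a fresh two-region computation; you instead recycle the Gaussian cap $e^{-k\abs{x}^2}-e^{-kR_2^2}$ from Lemma~\ref{L3.3}, now with $k$ small rather than large. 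Your choice is arguably cleaner since the relevant derivatives are already on record, while the paper's bump vanishes to infinite order at the boundary --- a feature not actually needed here or in the later applications (Lemmas~\ref{L4.1}--\ref{L4.3}). One small correction to your bookkeeping: on the outer region the ratio $\ginfdel\psi/\abs{\grad\psi}^{3-\gamma}$ equals $(2kt-1)/\sqrt{t}$, which is increasing in $t$ and hence attains its minimum $\sqrt{k}$ at $t=1/k$; so the right smallness requirement is $\eps<\sqrt{k}$, not ``$\eps$ small relative to $1/R_2$'' --- though with $R_2=\sqrt{2/k}$ these are of the same order, so your parameter cascade $k\to R_2\to\eps\to R_1$ closes as written.
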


\begin{proof}
Fix $\delta>0$. For some $\varepsilon>0$, we define
$$\psi(x)=\exp(-\frac{1}{1-\abs{\varepsilon x}^2}), \quad \abs{x}<\varepsilon^{-1}.$$
Pick $R_0$ such that $|q(x)|\leq \varepsilon$ for $|x|\geq R_0$. We shall fix a choice of $\varepsilon$ later depending on $\delta$. Let $|z|\geq R_0 + \varepsilon^{-1}$, and
define $\phi^z(x)=\phi(x)=\psi(x-z)$ for $x\in \sB_{\varepsilon^{-1}}(z)$. Then  direct calculations give us
\begin{align*}
&\ginfdel \phi + q(x)\cdot \grad\phi \abs{\grad\phi}^{2-\gamma} + \delta \phi^{3-\gamma}
\\
&\geq \phi^{3-\gamma} \Bigl[\frac{ (2\varepsilon^2 \abs{x-z})^{4-\gamma} }{(1-|\varepsilon(x-z)|^2)^{8-\gamma}} - \frac{(2\varepsilon^2)^{3-\gamma} \abs{x-z}^{2-\gamma} }{(1-|\varepsilon(x-z)|^2)^{6-\gamma}} -\frac{2 (2\varepsilon^2)^{4-\gamma} \abs{x-z}^{4-\gamma} }{(1-|\varepsilon(x-z)|^2)^{7-\gamma}}
\\
&\qquad - \norm{q}_{\sB}\frac{ (2\varepsilon^2 \abs{x-z})^{3-\gamma} }{(1-|\varepsilon(x-z)|^2)^{6-\gamma}} + \delta \Bigr]
\\
&\geq 2^{3-\gamma} \frac{\phi^{3-\gamma}}{(1-|\varepsilon(x-z)|^2)^{8-\gamma}} \Bigl[ 2 (\varepsilon^2 \abs{x-z})^{4-\gamma}  - \varepsilon^{2(3-\gamma)} \abs{x-z}^{2-\gamma} (1-|\varepsilon(x-z)|^2)^2
\\
&\quad - 4 \varepsilon^{2(4-\gamma)} \abs{x-z}^{4-\gamma} (1-|\varepsilon(x-z)|^2)
 - \varepsilon  \varepsilon^{2(3-\gamma)} \abs{x-z}^{3-\gamma} (1-|\varepsilon(x-z)|^2)^2
 \\
 &\qquad + \frac{\delta}{2^{3-\gamma}} (1-|\varepsilon(x-z)|^2)^{8-\gamma} \Bigr].
\end{align*}
Now take $\delta_1\in (0, 1)$ and and consider $\abs{\varepsilon (x-z)}^2\geq (1-\delta_1)$. Since $\abs{\varepsilon (x-z)}\leq 1$ , we note that
$1-\abs{\varepsilon (x-z)}^2\leq \delta_1$ and
\begin{align*}
&2 (\varepsilon^2 \abs{x-z})^{4-\gamma}  - \varepsilon^{2(3-\gamma)} \abs{x-z}^{2-\gamma} (1-|\varepsilon(x-z)|^2)^2
\\
&\quad - 4 \varepsilon^{2(4-\gamma)} \abs{x-z}^{4-\gamma} (1-|\varepsilon(x-z)|^2)
 - \varepsilon  \varepsilon^{2(3-\gamma)} \abs{x-z}^{3-\gamma} (1-|\varepsilon(x-z)|^2)^2
\\
&\qquad \geq 2 (\varepsilon)^{4-\gamma} (1-\delta_1)^{\frac{4-\gamma}{2}} -\varepsilon^{4-\gamma} \delta_1^2 - 4 \varepsilon^{4-\gamma} \delta_1 - \varepsilon^{4-\gamma} \delta_1^2
\\
&\qquad= \varepsilon^{4-\gamma} \left[2(1-\delta_1)^{\frac{4-\gamma}{2}} -  2\delta_1^2 -4\delta_1 \right]>0\,,
\end{align*} 
for $\delta_1$ small, uniformly in $\varepsilon\in (0, 1)$. Thus, for $1-\delta_1\leq \abs{\varepsilon (x-z)}^2\leq 1$, we have
$$\ginfdel \phi + q(x)\cdot \grad\phi \abs{\grad\phi}^2 + \delta \phi^3>0.$$
Now we consider the situation $1-\delta_1> \abs{\varepsilon (x-z)}^2$. Then, we have $(1- \abs{\varepsilon (x-z)}^2)>\delta_1$. Therefore, we obtain from the
above calculation that
\begin{align*}
&\ginfdel \phi + q(x)\cdot \grad\phi \abs{\grad\phi}^2 + \delta \phi^3
\\
&\quad\geq 2^{3-\gamma} \frac{\phi^{3-\gamma}}{(1-|\varepsilon(x-z)|^2)^{8-\gamma}} \left(-8 \varepsilon^{4-\gamma} + \frac{\delta\delta_1^{8-\gamma}}{2^{3-\gamma}}\right)>0\,,
\end{align*}
for $\varepsilon$ small enough. Thus, with this choice of $\varepsilon$, we find that
$$\infdel \phi + q(x)\cdot \grad\phi \abs{\grad\phi}^{2-\gamma} + \delta \phi^{3-\gamma}\,>0\quad \text{in}\; \sB_{\varepsilon^{-1}}(z).$$
We choose $R_1=R_0+\varepsilon^{-1}$ and $R_2=\varepsilon^{-1}$.

By Corollary~\ref{C3.1}, $\lambda_{\sB_{\varepsilon^{-1}}(z)}(\sL)\leq \delta$ for any $z$ satisfying $|z|\geq R_1 + \varepsilon^{-1}$. This
completes the proof.
\end{proof}
We start by proving an existence result.
\begin{lemma}\label{L4.1}
Suppose condition (Q) and  one of the followings hold:
\begin{itemize}
\item[(a)] (A1)--(A3).
\item[(b)] (B1)--(B3).
\end{itemize}
Then there exists a nontrivial non-negative solution of 
$$\sL u(x) + f(x, u)=0 \quad\quad x\in \Rd.$$
\end{lemma}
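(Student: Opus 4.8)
The plan is the classical one for Fisher--KPP type problems: produce an ordered pair consisting of a compactly supported subsolution and a constant supersolution, solve the Dirichlet problem on an exhausting family of balls $\sB_n$, and then pass to the limit.

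First I would fix the supersolution. By (A2) (resp.\ (B2)) the constant $\bar u\equiv M$ satisfies $\sL\bar u+f(x,\bar u)=f(x,M)\le 0$, so $\bar u$ is a supersolution of $\sL u+f(x,u)=0$ on $\Rd$. For the subsolution I would use Proposition~\ref{P4.1}. From (A3) (resp.\ (B3)) and the uniformity of the convergence of $f(x,s)/s^{3-\gamma}$ (resp.\ $f(x,s)/s^{\alpha}$, using $s^{\alpha}\ge s^{3-\gamma}$ for $s\le 1$ since $\alpha\le 3-\gamma$) there are $\delta_0,R_0>0$ and $s_0\in(0,M)$ with $f(x,s)\ge\delta_0 s^{3-\gamma}$ whenever $\abs{x}\ge R_0$ and $0<s\le s_0$. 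Applying Proposition~\ref{P4.1} with this $\delta_0$ yields $R_1,R_2,\psi$; I would pick $x_0$ with $\abs{x_0}\ge\max\{R_1,R_0+R_2\}$, so that $\sB_{R_2}(x_0)\subset\{\abs{x}\ge R_0\}$, and set $\underline u=\eps\,\psi(\cdot-x_0)$ on $\sB_{R_2}(x_0)$ and $\underline u=0$ elsewhere. Since $\psi$ vanishes to infinite order on $\partial\sB_{R_2}(x_0)$, $\underline u$ is smooth, non-negative and nontrivial, and the extension by zero causes no difficulty because $f(x,0)=0$. Choosing $\eps$ so small that $\eps\norm{\psi}_\infty\le\min\{s_0,M\}$ and using the $(3-\gamma)$-homogeneity of $\sL$ together with Proposition~\ref{P4.1}, on $\sB_{R_2}(x_0)$ one gets $\sL\underline u=\eps^{3-\gamma}\sL\bigl(\psi(\cdot-x_0)\bigr)>-\delta_0\underline u^{3-\gamma}\ge-f(x,\underline u)$, while outside $\overline{\sB_{R_2}(x_0)}$ trivially $\sL\underline u+f(x,\underline u)=0$. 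Thus $\underline u$ is a (classical, hence viscosity) subsolution on $\Rd$ with $0\le\underline u\le\bar u$.

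Next I would solve the equation on each ball $\sB_n$. Let $L$ be a Lipschitz constant of $s\mapsto f(x,s)$ on $[0,M]$, uniform in $x$; then $g(x,s)\df f(x,s)+Ls$ is nondecreasing in $s$ on $[0,M]$ and the equation reads $\sL u-Lu=-g(x,u)$. The role of this shift is that the auxiliary operator $u\mapsto\sL u-Lu$ fits the hypotheses of Theorem~\ref{T2.1} with $F(s)=s$ (strictly increasing) and $c\equiv-L<0$, and that Lemma~\ref{L2.1} still applies, the term $-Lu$ and the continuous right-hand side being of lower order. I would then run the monotone iteration from $u_0\equiv M$: let $u_{k+1}$ solve $\sL u_{k+1}-Lu_{k+1}=-g(x,u_k)$ in $\sB_n$ with $u_{k+1}=0$ on $\partial\sB_n$, whose existence follows by Perron's method exactly as in Theorem~\ref{T2.3} (comparison via Theorem~\ref{T2.1} with $F(s)=s$, $c\equiv-L$, plus a barrier at $\partial\sB_n$), using $\underline u$ and $\bar u$ as sub/supersolutions. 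Monotonicity of $g$ and Theorem~\ref{T2.1} give $\underline u\le u_{k+1}\le u_k\le M$ for all $k$; since $\abs{g(x,s)}\le 2LM$ on $[0,M]$, Lemma~\ref{L2.1} supplies a local Lipschitz bound \emph{uniform in $k$}, so $u_k\downarrow u^{(n)}$ locally uniformly and, by stability of viscosity solutions, $u^{(n)}$ solves $\sL u^{(n)}+f(x,u^{(n)})=0$ in $\sB_n$ with $\underline u\le u^{(n)}\le M$.

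Finally I would let $n\to\infty$. On a fixed ball $\sB_R$ and for $n>R$, the bound $0\le u^{(n)}\le M$ and Lemma~\ref{L2.1} (with constants depending only on $\norm{q}_{L^\infty(\sB_R)}$, $M$ and $\sup_{\sB_R\times[0,M]}\abs{f}$, hence independent of $n$) yield a uniform local Lipschitz estimate; Arzel\`a--Ascoli and a diagonal argument then extract a subsequence converging locally uniformly to some $u\in\cC(\Rd)$ with $\underline u\le u\le M$, and stability of viscosity solutions shows that $u$ solves $\sL u+f(x,u)=0$ in $\Rd$. As $u\ge\underline u\gneq 0$, this is the desired nontrivial non-negative solution. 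I expect the principal obstacle to be exactly the mismatch between the $(3-\gamma)$-homogeneous zeroth-order structure demanded by Theorems~\ref{T2.1} and~\ref{T2.3} and the general nonlinearity $f$: one cannot iterate on the original equation directly (for $\gamma<2$ the map $s\mapsto f(x,s)+\kappa s^{3-\gamma}$ fails to be monotone near $s=0$), and it is the shift $\sL u\mapsto\sL u-Lu$, together with the uniform estimates above, that makes the scheme work. In case (b) there is the extra point that $f$ need not be Lipschitz at $s=0$; I would handle this by first replacing $f$ with a Lipschitz nonlinearity $\hat f\le f$ that coincides with $f$ outside a neighborhood of $0$, satisfies $\hat f(x,0)=0$, and stays positive near $0$ (so the same subsolution applies), carrying out the above for $\hat f$, and recovering $f$ in the limit.
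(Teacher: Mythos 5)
Your proposal is correct and follows essentially the same route as the paper: constant supersolution $M$ from (A2)/(B2), a compactly supported positive subsolution built from the bump $\psi$ of Proposition~\ref{P4.1} using (A3)/(B3), monotone iteration on an exhaustion of $\Rd$ after the linear shift $\sL u - \sigma u = -(f(x,u)+\sigma u)$ so that the right-hand side is monotone in $u$ and the comparison machinery of Theorems~\ref{T2.1}--\ref{T2.3} applies, and compactness via Lemma~\ref{L2.1} to pass to the limit in $k$ and then in $n$. The one place where you diverge cosmetically from the paper is the treatment of the non-Lipschitz behaviour of $f$ at $s=0$ in case (b): you propose truncating/modifying $f$ to a Lipschitz $\hat f\le f$ near $0$ and then undoing this in the limit, whereas the paper uses the argument shift $f_\varepsilon(x,s)=f(x,\varepsilon+s)$ (which is automatically Lipschitz on $[0,\infty)$ and still positive near $0$) and then lets $\varepsilon\to 0$ using the uniform-in-$\varepsilon$ Lipschitz bound from Lemma~\ref{L2.1} and the fixed subsolution to keep the limit nontrivial. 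Your scheme is equivalent in spirit but is stated more loosely (``recovering $f$ in the limit''); to be airtight you would want to make the same two points the paper relies on: a subsolution independent of the regularization so that the limit cannot collapse to $0$, and uniform Lipschitz control so that the limit actually solves the original equation.
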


\begin{proof}
First we consider (b).
Thanks to Theorem~\ref{T2.1} and ~\ref{T2.3}, we can  apply monotone iteration method to find a solution.
Since $f$ need not be Lipschitz all the way upto $0$, we need to modify the proof a bit. Due to
(B3) we can find $\epsilon_0, \delta>0$ satisfying
\begin{equation}\label{EL4.1A}
\inf_{\Rd} f(x, s)\geq 2\delta s^\alpha, \quad \text{for}\; s\leq \epsilon_0\,.
\end{equation}
Now for every $\varepsilon\in (0, \epsilon_0/2)$ we define $f_\varepsilon(x, s)= f(x, \varepsilon +s)$.
Note that $f_\varepsilon$ is locally Lipschitz in $[0, \infty)$. We first find a non-negative
nontrivial solution to
\begin{equation}\label{EL4.1B}
\sL u_\varepsilon + f_\varepsilon(x, u_\varepsilon)=0\quad \text{in}\; \Rd.
\end{equation}
By (B2), the constant function $M-\varepsilon$ is a super-solution  to \eqref{EL4.1B}.
Using Proposition~\ref{P4.1}, we can find a ball $\sB$ and an principal eigenpair 
$(\varphi, \lambda_\sB)$ satisfying 
$$\sL\varphi +\lambda_{\sB}\varphi^{3-\gamma}=0\quad \text{in}\; \sB\,,$$
and $\lambda_\sB\leq \delta$. Also, normalize
$\varphi$ so that $\norm{\varphi}_\infty=\frac{\epsilon_0\wedge 1}{2}$.
Then, using \eqref{EL4.1A},
\begin{align*}
\sL \varphi  + f_\varepsilon(x, \varphi) 
&\geq  f(x, \varepsilon +\varphi)-\lambda_\sB \varphi^{3-\gamma}
\\
&\geq  2 \delta (\varepsilon +\varphi)^\alpha -\delta \varphi^{3-\gamma}
\\
&= 2 \delta (\varepsilon +\varphi)^\alpha -\delta \varphi^{\alpha} >0
\quad \text{in}\; \sB.
\end{align*}
Thus we have a subsolution to \eqref{EL4.1B} in $\sB$. Note that the subsolution vanishes at the 
boundary. Denote by $\bar{u}=M-\varepsilon$. Let $\sigma$ be large enough to satisfy 
$$\sigma > \sup_{x\in\Rd}(\mathrm{Lip}(f_\varepsilon(x, \cdot)) \; \text{on}\; [0, M]).$$
Fix $\sB_n$ large enough to contain $\sB$, and define a sequence of functions $\{u_k\}$ as follows:
$u_1=\bar{u}$, and 
\begin{align*}
\sL u_{k+1}-\sigma u_{k+1} &= f_\varepsilon(x, u_k) -\sigma u_k \quad \text{in}\; \sB_n,
\\
u_{k+1}&=0\quad \text{on}\; \partial\sB_n.
\end{align*}
Existence of solution follows from the arguments of Theorem~\ref{T2.3}. By Theorem~\ref{T2.1}
it also follows that $u_1\geq u_2\geq u_3\geq \ldots\geq 0$. Employing the comparison principle
in $\sB$ we also have $u_k\geq \varphi$ for all $k$. 
Therefore, using Lemma~\ref{L2.1}, we can pass the limit in $\{u_k\}$ to find a solution to 
$$\sL u_{n, \varepsilon} + f_\varepsilon(x, u_{n, \varepsilon})=0\quad \text{in}\; \sB_n(0),$$
with $\varphi\leq u_{n, \varepsilon}\leq M$ in $\sB_n(0)$. Also, by Lemma~\ref{L2.1} , we note that the $u_{n, \varepsilon}$ is locally Lipschitz uniformly in $n, \varepsilon$. Thus we can extract 
a subsequence converging to some $u_\varepsilon\in \cC(\Rd)$ solving
$$\sL u_\varepsilon + f_\varepsilon(x, u_\varepsilon)=0\,,$$
in $\Rd$ and $\varphi\leq u_\varepsilon \leq M$ in $\Rd$. This gives \eqref{EL4.1B}.
We again use a similar argument to pass the limit to $\varepsilon\to 0$, and obtain a solution
$$\sL u  + f(x, u)=0, $$
in $\Rd$ and $\varphi\leq u \leq M$ in $\Rd$.

Now we consider (a). In this case the proof is more straight-forward since $f$ is locally Lipschitz
in $[0, \infty)$. We just need to find a positive subsolution in a ball $\sB$. Note that by \eqref{EA3}
there exists $\delta>0$ such that 
$$\ell(x)>2\delta,$$
for all $|x|\geq R$, for some $R$. Again, $f(x, s)\geq (\ell(x)-\delta)s^{3-\gamma}$ for all $x$ and $s\leq \epsilon_0$. Then applying Proposition~\ref{P4.1} we can find a ball $\sB\Subset\sB_R^c$, and
and eigenfunction $\varphi$ with $\norm{\varphi}_\infty\in (0, \epsilon_0/2)$ satisfying
$$\sL \varphi  + f(x, \varphi)\geq 0\quad \text{in}\; \sB,$$
giving a positive subsolution in $\sB$. Hence, we can repeat the arguments as above to find a non-trivial, non-negative solution.
\end{proof}

The following result shows a strong maximum principle.
\begin{lemma}\label{L4.2}
Suppose that either (A3) or (B3) holds.
If $v$ is a non-negative super-solution to $\sL v + f(x, v)= 0$, then either we have $v\equiv 0$ or $\inf_{\Rd} v>0$. 
\end{lemma}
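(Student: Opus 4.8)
The plan is to assume $v\not\equiv0$ (the other alternative being the trivial one) and to prove first that $v>0$ everywhere, and then that $\inf_{\Rd}v>0$; both steps hinge on the fact that near $s=0$ the nonlinearity $f$ is squeezed between constant multiples of $s^{3-\gamma}$. Precisely, from (A3) — or from (B3) together with $s^{\al}\ge s^{3-\gamma}$ for $s\le1$ and $\al\le3-\gamma$ — and the uniformity of the limit defining $\ell$, I would extract constants $\eps_0\in(0,1]$, $\delta_0>0$, $C_1\ge0$, $R_0\ge0$ (with $C_1=R_0=0$ in case (B3)) such that $f(x,s)\ge-C_1s^{3-\gamma}$ for all $x$ and $s\in[0,\eps_0]$, and $f(x,s)\ge\delta_0s^{3-\gamma}$ whenever $\abs{x}\ge R_0$ and $s\in[0,\eps_0]$. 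The use of these bounds: if a $\cC^2$ function $\ph$ touches $v$ from below at $x_0$ with $\ph(x_0)=v(x_0)\in[0,\eps_0]$, then the supersolution inequality for $v$ (or condition (iii) of the definition of viscosity solution when $\gamma=2$ and $\grad\ph(x_0)=0$) gives $\sL\ph(x_0)\le-f(x_0,v(x_0))\le C_1(\ph(x_0))^{3-\gamma}$, and even $\sL\ph(x_0)\le-\delta_0(\ph(x_0))^{3-\gamma}$ if moreover $\abs{x_0}\ge R_0$.

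For the positivity step I would suppose $v(z)=0$ for some $z$. Since $v$ is l.s.c.\ and $\{v>0\}$ is open and nonempty, one finds $x_1$ and $\rho>0$ with $\sB_\rho(x_1)\subset\{v>0\}$, a point $z_1\in\partial\sB_\rho(x_1)$ with $v(z_1)=0$, and $m_1=\min_{\partial\sB_{\rho/2}(x_1)}v>0$. Following the computation behind \eqref{ET2.2B} with $c$ replaced by the constant $-C_1$, the barrier $\kap(e^{-\al\abs{x-x_1}}-e^{-\al\rho})$, for $\al$ large and $\kap>0$ small enough that it stays $\le\eps_0$ on $\{\rho/2\le\abs{x-x_1}\le\rho\}$ and $\le m_1$ on $\partial\sB_{\rho/2}(x_1)$, is a strict $\cC^2$ subsolution of $\sL w+f(x,w)=0$ on that closed annulus and lies below $v$ on its boundary. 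I would compare it with $v$ there not via Theorem~\ref{T2.1} but directly: at a hypothetical interior negative minimum $\hat x$ of $v$ minus the barrier, testing $v$ from below and using $0\le v(\hat x)<(\text{barrier})(\hat x)\le\eps_0$ contradicts strictness; hence the barrier is $\le v$ on the closed annulus. Since it is also $<0\le v$ outside $\sB_\rho(x_1)$ and is smooth near $z_1\ne x_1$, it touches $v$ from below at $z_1$, and there the supersolution inequality contradicts strictness once more. This forces $v>0$ in $\Rd$. The hard part, here and in the last step, will be that $\sL w+f(x,w)=0$ is \emph{not} monotone in $w$, so the comparison principle of Theorem~\ref{T2.1} is unavailable for two arbitrary sub/supersolutions; I circumvent this by always testing $v$ against a function that equals $v$ at the contact point — so $f$ is evaluated at a single value — and by using that $\pm s^{3-\gamma}$ is monotone near $0$.

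For the last step I would assume $\inf_{\Rd}v=0$, take $x_n$ with $v(x_n)\to0$, and observe that, $v$ being l.s.c.\ and positive, $\{x_n\}$ has no finite accumulation point, so $\abs{x_n}\to\infty$. Then I would invoke Proposition~\ref{P4.1} with $\delta=\delta_0$ to obtain $R_1,R_2$ and $\psi$, rescale (by the $(3-\gamma)$-homogeneity of $\sL$) so that $\norm\psi_\infty=\eps_0/2$, and set $\phi_n=\psi(\cdot-x_n)$. For $n$ large with $\abs{x_n}\ge\max\{R_1,R_0+R_2\}$, homogeneity together with the bounds above makes $\kap\phi_n$ a strict $\cC^2$ subsolution of $\sL w+f(x,w)=0$ in $\sB_{R_2}(x_n)$ for every $\kap\in(0,1]$ (since $\kap\phi_n\le\eps_0$ and $\abs{x}\ge R_0$ there). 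Because $v>0$ on the compact ball $\overline{\sB_{R_2}(x_n)}$, the slope $\kap_n=\sup\{\kap\in(0,1]:v\ge\kap\phi_n\text{ on }\overline{\sB_{R_2}(x_n)}\}$ is positive; evaluating at $x_n$ gives $\kap_n\le2v(x_n)/\eps_0<1$ for $n$ large, and lower semicontinuity gives $v\ge\kap_n\phi_n$ on $\overline{\sB_{R_2}(x_n)}$ with equality at some $z_n$, which is interior since $\phi_n=0<v$ on $\partial\sB_{R_2}(x_n)$. Then $\kap_n\phi_n$ touches $v$ from below at $z_n$ with matching value, so the supersolution inequality reads $\sL(\kap_n\phi_n)(z_n)+f(z_n,\kap_n\phi_n(z_n))\le0$, contradicting strictness of the subsolution (for $\gamma=2$ with $\grad(\kap_n\phi_n)(z_n)=0$ one uses condition (iii) and the corresponding assertion in Proposition~\ref{P4.1}). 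Hence $\inf_{\Rd}v>0$, which completes the plan.
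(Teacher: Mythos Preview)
Your proof is correct and, for the second step, takes a genuinely different route from the paper.

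For the positivity part ($v>0$), the paper simply observes that near a zero of $v$ one has either $\sL v-\norm{c}_\infty v^{3-\gamma}\le0$ (case (A3), with $c(x)=f(x,v(x))/v(x)^{3-\gamma}$ bounded) or $\sL v\le0$ (case (B3), since $f\ge0$ for small $s$), and then cites Theorem~\ref{T2.2} directly. Your argument reproduces the barrier computation of Theorem~\ref{T2.2} in place, which is equivalent but less economical.

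For the lower bound $\inf_{\Rd}v>0$, the approaches diverge. The paper fixes a scale $\kappa<\inf_{\sB_{R_1+2R_2}}v$ for the bump $\psi$ and then uses a \emph{sliding method}: for any target point $z$ it moves the bump along the segment $\gamma(t)$ from $0$ to $z$, defines the last time $t^*$ at which $\psi(\cdot-\gamma(t))<v$, and at the touching configuration obtains a contradiction via a doubling-of-variables argument as in Theorem~\ref{T2.1}. This yields an \emph{explicit} lower bound $\kappa$. Your argument is by contradiction: a minimizing sequence $x_n$ with $v(x_n)\to0$ must satisfy $|x_n|\to\infty$, and you then center the bump at $x_n$, scale it by the largest $\kappa_n$ keeping it below $v$, and use $\kappa_n\phi_n$ \emph{directly} as a $\cC^2$ test function at the touching point~$z_n$. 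This avoids both the sliding continuation and the doubling of variables (the latter is unnecessary here since the comparison function is smooth), at the cost of losing the explicit constant. Both rely on Proposition~\ref{P4.1} and the $(3-\gamma)$-homogeneity of $\sL$ in the same way; your version is shorter, while the paper's is constructive and prepares the ground for the more quantitative Lemma~\ref{L4.3}.
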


\begin{proof}
For the first part, we show that either $v\equiv 0$ or $v>0$. Consider $D=\{x\in\Rd\; :\; v(x)=0\}$. Since $v$ is continuous, by Lemma~\ref{L2.1},
we must have $D$ closed. We show that $D$ is also open. Take $z\in D$. 
Using (A3) above we can find a ball $\sB(z, r)$ such that $c(x)\df\frac{f(x, v(x))}{v^{3-\gamma}(x)}$
bounded. Thus $v$ is a super-solution  of
$$\sL v- \norm{c}_{L^\infty(\sB(z, r))} v^{3-\gamma}=0\quad \text{in}\; \sB(z, r).$$
Applying Theorem~\ref{T2.2} we obtain $v=0$ in $\sB(z, r)$. Thus $D$ is open. 
Now consider (B3). Since $f(x, s)\geq 0$ for all $s$ small, we can choose $\sB(z, r)$ small enough so
that $f(x, v)\geq 0$ in $\sB(z, r)$. Hence, $\sL v\leq 0$ in $\sB(z, r)$ implying $v=0$ in $\sB(z, r)$,
by Theorem~\ref{T2.2}. Hence $D$ is open. Therefore, either $D=\emptyset$ or
$D=\Rd$. This proves the first part.

Next we suppose that $v>0$ in $\Rd$. We give a proof with the assumption (B3) and the proof 
assuming (A3) would be analogous.
The idea of the proof is to use the subsolution constructed in Proposition~\ref{P4.1}. Fix $\delta>0$ small enough so that
$$\ell(x)\geq 3\delta\quad \text{for all}\; |x|\geq R,$$
for some $R>0$. 
By our assumption of $f$, there exists $\epsilon_0>0$ satisfying $f(x, s)\geq 2\delta s^{\alpha}\geq 2\delta s^{3-\gamma}$ for all $|x|\geq R$ and $s\in [0, \epsilon_0)$.
Choose $R_1 (\geq R), R_2$ and $\psi$ from Proposition~\ref{P4.1} with the above choice of $\delta$. Normalize $\psi$ so that $\norm{\psi}_\infty=\kappa\leq \epsilon_0$. Here
we choose $\kappa$ small enough so that 
$$\kappa<\inf_{\sB_{R_1+ 2R_2}} v.$$
We show that
\begin{equation}\label{EL4.2A}
\inf_{\Rd} v\,\geq \kappa.
\end{equation}
From Proposition~\ref{P4.1}, we note that for any $|z|\geq R_1+ 2R_2$, we have for $\phi(x)=\phi^z(x)=\psi(x-z)$ that
\begin{equation}\label{EL4.2B}
\sL\phi + f(x, \phi)\geq -\delta\phi^{3-\gamma} + 2\delta \phi^{3-\gamma}=\delta \phi^{3-\gamma}\quad \text{in}\; \sB_{R_2}(z).
\end{equation}
Pick $z\in\Rd$ with $|z|\geq R_1+R_2$ and let
$\gamma:[0, 1]\to \Rd$ be the line joining $0$ to $z$. Define
$$t^*=\sup\{t\in[0, 1]: \psi(\cdot - \gamma(t))< v\quad \text{in}\; \sB_{R_2}(\gamma(t))\}.$$
Clearly, $t^*>0$ due to continuity. We need to show that $t^*=1$. Suppose
that $t^*<1$. Then in the ball
$\widehat\sB=\sB_{R_2}(\gamma(t^*))$ we have $\phi(\cdot)=\psi(\cdot - \gamma(t^*))\leq v$ and it
must touch $v$ at some point in $\widehat\sB$.
 By our choice it also evident that $|\gamma(t^*)|\geq R_1+R_2$.
Also, $\phi$ satisfies \eqref{EL4.2B} in $\widehat\sB$ and vanishes on the boundary of $\widehat\sB$.
As in the proof of Theorem~\ref{T2.1}, we consider
$$w_\varepsilon(x, y)= \phi(x)-v(y) -\frac{1}{4\varepsilon}\abs{x-y}^4, \quad x, y\in{\widehat\sB}.$$
Clearly, $\max w_\varepsilon> 0$. Let $(x_\varepsilon, y_\varepsilon)$ be a maximizer. As shown in Theorem~\ref{T2.1}, we may
also assume that $x_\varepsilon, y_\varepsilon\to z\in \widehat\sB$ as $\varepsilon\to 0$, since the maximum of $(\phi-v)$ can not be attained on the boundary.
 Hence repeating the arguments of Theorem~\ref{T2.1} we arrive at 
\begin{align*}
\delta \phi^{3-\gamma}(x_\varepsilon)-f(x_\varepsilon, \phi(x_\varepsilon))\leq -f(y_\varepsilon, v(y_\varepsilon)) 
+ \omega(\abs{x_\varepsilon-y_\varepsilon})(1+ \left(\varepsilon^{-1}|x_\varepsilon-y_\varepsilon|^3\right)^{3-\gamma}).
\end{align*}
Letting $\varepsilon\to 0$ and using  \eqref{ET2.1B},
 we obtain $\delta \phi^{3-\gamma}(z)\leq 0$, contradicting the fact $\phi$ is positive inside $\sB_1$. This proves \eqref{EL4.2A}.
\end{proof}

\begin{remark}\label{Re-eigen}
As far as the existence of a bounded positive solution is concerned, the condition (A3) can be relaxed. For instance, a condition weaker than \eqref{EA3} is
$$\lim_{n\to\infty} \lambda_{\sB_n}(\sL+\ell)\,<\, 0\,.$$
Under this hypothesis we can construct a positive subsolution $\underline{u}_k$ of $\sL u + f(x, u)=0$ in an arbitrary large ball $\sB_k$ with a Dirichlet condition on the boundary. By scaling we can also keep this
subsolution smaller that $M$. Then the arguments of Lemma~\ref{L4.1} shows that the solution obtained by monotone iteration should stay above $\underline{u}_k$ for all $k$. Thus, the solution has to be
positive in $\Rd$.
\end{remark}

Combining Lemma~\ref{L4.1} and Lemma~\ref{L4.2} we obtain the existence of a positive solution. Now we proceed for the uniqueness. In some cases, we can obtain
the uniqueness as a consequence of the Liouville property. For instance, if we consider $f(x, s)=s^{3-\gamma}(1-s)$
and $q$ is compactly supported, then from the Liouville property (Theorem~\ref{T2.5}) it follows that 
there is no non-constant solution of $\sL u + f(x,u)=0$ in $\Rd$ if $u\leq 1$. But we cannot apply Liouville theorem in our general setting. Also, the method of
\cite{BHR} fails to apply for degenerate operator, as we are dealing with a degenerate nonlinear operator.
To establish the uniqueness we first find the asymptotic of  solutions at infinity.

\begin{lemma}\label{L4.3}
Suppose that either (A4) or (B4) holds. Then for any positive super-solution $v$ of 
$\sL v + f(x, v)= 0$ in $\Rd$ we have $\liminf_{\abs{x}\to\infty} v(x)\geq M_1$.
\end{lemma}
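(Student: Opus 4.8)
The plan is to show that $v$ cannot dip below $M_1$ infinitely far out by constructing, near infinity, a family of subsolutions that sit below $v$ and whose minimum value can be pushed up to (just below) $M_1$. Fix $\delta\in(0,M_1)$ and set $\delta' = M_1-\delta$. By (B4) (the case (A4) being identical with $\alpha$ replaced by $3-\gamma$) there is $\eta>0$ and $R>0$ such that
$$
f(x,s)\,\geq\, \eta\, s^{\alpha}\qquad\text{for all }|x|\geq R,\ s\in(0,M_1-\delta).
$$
Thus on $\{|x|\geq R\}$ the function $v$ is a supersolution of $\sL w + \eta\, w^{\alpha}\,\Ind_{\{w< M_1-\delta\}} = 0$ in an appropriate sense; more precisely, any subsolution $\phi$ of $\sL\phi + \eta\phi^{\alpha}\geq 0$ taking values in $(0,M_1-\delta)$ on $\{|x|\geq R\}$ will be comparable to $v$ there. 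The subsolution we use is exactly the bump function $\psi$ from Proposition~\ref{P4.1}: for the given $\delta$-dependent constant (play the role of the ``$\delta$'' in Proposition~\ref{P4.1} with a value $\leq \eta$ after normalizing $\alpha$ versus $3-\gamma$ by Young's inequality, as in Lemma~\ref{L2.1}), we obtain $R_1,R_2>0$ and $\psi\gneq 0$ supported in $\sB_{R_2}$ with $\sL\phi^{x_0} + \eta(\phi^{x_0})^{\alpha}>0$ in $\sB_{R_2}(x_0)$ and $\phi^{x_0}=0$ on $\partial\sB_{R_2}(x_0)$, for every $|x_0|\geq R_1$, where $\phi^{x_0}=\psi(\cdot-x_0)$.

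The core of the argument is a sliding/continuation step, run along rays to infinity, exactly as in the proof of Lemma~\ref{L4.2}. Normalize $\psi$ so that $\norm{\psi}_\infty = \kappa$ for a value $\kappa$ that we will let increase towards $M_1-\delta$; at each stage $\kappa<M_1-\delta$, so the comparison inequality $\sL\phi^{x_0}+\eta(\phi^{x_0})^\alpha>0\geq \sL v + f(x,v)$ holds in $\sB_{R_2}(x_0)$ whenever $0<\phi^{x_0}<M_1-\delta$ there, and Theorem~\ref{T2.1} applies. Fix a direction $e\in\bS^{d-1}$; for $t$ large let $x_t = te$, and consider
$$
t^*=\sup\{\,t\geq t_0:\ \phi^{x_s}< v\ \text{in}\ \sB_{R_2}(x_s)\ \text{for all}\ s\in[t_0,t]\,\}.
$$
Starting point $t_0$ is chosen large enough that $|x_{t_0}|\geq R_1+R_2$ and $\kappa < \inf_{\sB_{R_1+2R_2}} v$, which is positive by Lemma~\ref{L4.2}; this forces $t^*>t_0$. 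If $t^*<\infty$, then at $t=t^*$ the bump $\phi^{x_{t^*}}$ touches $v$ from below at an interior point of $\sB_{R_2}(x_{t^*})$ (it cannot touch on the boundary, where $\phi^{x_{t^*}}=0<v$), and the doubling-of-variables computation from Theorem~\ref{T2.1}/Lemma~\ref{L4.2} yields a contradiction. Hence $t^*=\infty$, i.e. $v(x)\geq \phi^{x}(x)=\psi(0)\,$-level value along the whole ray — more usefully, $v\geq \kappa$ at the centers, hence $\liminf_{|x|\to\infty} v(x)\geq \kappa$. Since $e$ was arbitrary and $\kappa$ can be taken arbitrarily close to $M_1-\delta$, and then $\delta\downarrow 0$, we conclude $\liminf_{|x|\to\infty} v(x)\geq M_1$.

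One technical point deserves care: Proposition~\ref{P4.1} produces, for a prescribed $\delta$, a \emph{fixed} radius $R_2=\varepsilon^{-1}$ and bump $\psi$, and the normalization $\norm{\psi}_\infty$ there was a free scaling. Here we need the bump's inequality $\sL\phi + \eta\phi^{\alpha}>0$ to survive after rescaling the height up towards $M_1-\delta$; this is where one must be a little careful, since $\sL$ is $(3-\gamma)$-homogeneous while the absorption has exponent $\alpha\leq 3-\gamma$. Rescaling $\psi\mapsto \lambda\psi$ multiplies $\sL\phi$ by $\lambda^{3-\gamma}$ and $\eta\phi^\alpha$ by $\lambda^\alpha$, so for $\lambda\geq 1$ the gradient term dominates and the strict inequality is preserved; thus we are free to take $\norm{\psi}_\infty$ as large as we like up to the constraint $\norm{\psi}_\infty< M_1-\delta$ (needed so that (B4) applies on the support). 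The main obstacle is therefore not any single estimate but organizing the continuation argument so that at every stage the bump stays in the regime $(0,M_1-\delta)$ where the favorable sign of $f$ is available; once that bookkeeping is in place, the touching/comparison contradiction is verbatim from Lemma~\ref{L4.2}.
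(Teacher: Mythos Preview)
Your proposal assembles the right ingredients (the bump $\psi$ from Proposition~\ref{P4.1}, the sign condition $f(x,s)\geq \eta s^\alpha$ on $(0,M_1-\delta)$ far out, the doubling-of-variables contradiction), but the sliding step as you have written it is circular. You want to normalize $\norm{\psi}_\infty=\kappa$ and push $\kappa$ up to $M_1-\delta$, yet to launch the slide at $t_0$ you impose $\kappa<\inf_{\sB_{R_1+2R_2}} v$. That infimum is positive by Lemma~\ref{L4.2}, but nothing says it is close to $M_1$; a priori it may be tiny. Your continuation then only delivers $\liminf_{|x|\to\infty} v \geq \inf_{\sB_{R_1+2R_2}} v$, which is no gain. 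The sentence ``$\kappa$ can be taken arbitrarily close to $M_1-\delta$'' is exactly the step that is not justified.

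The paper bypasses this by replacing the horizontal slide with a \emph{vertical} scaling at each fixed center. For $|z|$ large one sets $\phi^z_\varepsilon=(1-\varepsilon)\psi(\cdot-z)$ with $\norm{\psi}_\infty=1$, so $\norm{\phi^z_\varepsilon}_\infty=1-\varepsilon$. If $\phi^z_\varepsilon\not\leq v$ in $\sB_{r_2}(z)$, let $\eta=\max\{t>0:\,t\phi^z_\varepsilon<v\}\in(0,1)$. Then $\norm{\eta\phi^z_\varepsilon}_\infty<1-\varepsilon$, so $\eta\phi^z_\varepsilon$ still takes values in the favorable range $(0,M_1-\varepsilon)$ and remains a strict subsolution; it touches $v$ from below at an interior point, and the doubling argument gives the contradiction. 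The point is that scaling \emph{down} preserves the range constraint automatically, so no lower bound on $v$ is needed to get started; the conclusion $v(z)\geq(1-\varepsilon)\psi(0)$ for all large $|z|$ then drops out directly. Your rescaling paragraph is also slightly off: at points where $\sL\psi<0$, multiplying by $\lambda^{3-\gamma}$ with $\lambda\geq 1$ makes things worse, not better. What actually works is that Proposition~\ref{P4.1} is $(3-\gamma)$-homogeneous, so $\sL(\lambda\psi)>-\delta_0(\lambda\psi)^{3-\gamma}$ for every $\lambda$, and then $-\delta_0(\lambda\psi)^{3-\gamma}+\eta(\lambda\psi)^\alpha=(\lambda\psi)^\alpha\bigl(\eta-\delta_0(\lambda\psi)^{3-\gamma-\alpha}\bigr)>0$ once $\lambda\psi\leq M_1$ and $\delta_0$ is chosen below $\eta/M_1^{3-\gamma-\alpha}$.
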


\begin{proof}
 We may assume, without loss of generality, that $M_1=1$. We only provide a proof under the hypothesis (B4).
Fix $\varepsilon\in (0, 1)$. Let $\kappa>0$ be small enough to satisfy
$$4\kappa<\liminf_{\abs{x}\to\infty} \inf_{s\in (0, 1-\varepsilon)} \frac{f(x, s)}{s^\alpha}.$$
 Thus, there exists $r_\circ>0$ such that
\begin{equation}\label{EL4.3A}
\inf_{s\in (0, 1-\varepsilon)} \frac{f(x, s)}{s^\alpha} > 3\kappa\quad \text{for all}\; \abs{x}\geq r_\circ.
\end{equation}
Pick $r_1, r_2$ and $\psi$ from Proposition~\ref{P4.1} for $\delta=\kappa$.  
Normalize $\norm{\psi}_\infty=1$ and define $\phi^z_\varepsilon(x)=(1-\varepsilon) \psi(x-z)$.
We claim that 
\begin{equation}\label{EL4.3B}
\phi^z_\varepsilon(\cdot) \leq v(\cdot)\quad \text{in}\; \sB_{r_2}(z), \quad \text{for all }\; |z|\; \text{large}.
\end{equation}
If not, there would exist $|z|> 2(r_1+r_2+r_\circ)$ such that $\phi^z_\varepsilon(x_0)> v(x_0)$ for some $x_0\in \sB_{r_2}(z)$.
Define
$$\eta = \max\{t>0\; :\; t\phi^z_\varepsilon\,<\, v\quad \text{in}\; \sB_{r_2}(z)\}.$$
It is easily seen that $\eta\in (0, 1)$ and furthermore, $\eta\phi^z_\varepsilon$ should touch $v$ from below inside $\sB_r(z)$ as $v>0$ and $\phi^z_\varepsilon$ vanishes on the boundary of
$\sB_{r_2}(z)$. 
Again, $\norm{\eta\phi^z_\varepsilon}_\infty< (1-\varepsilon)$ and, by Proposition~\ref{P4.1}, we have
\begin{align*}
\sL (\eta\phi^z_\varepsilon) + f(x, \eta\phi^z_\varepsilon)&\geq
-\kappa\, \eta^{3-\gamma} (\phi^z_\varepsilon)^{3-\gamma} + f(x, \eta\phi^z_\varepsilon) 
\\
&\geq  f(x, \eta\phi^z_\varepsilon) - \kappa\,  (\eta\phi^z_\varepsilon)^{\alpha}
\\
&\geq (\eta\phi^z_\varepsilon)^\alpha \left(\frac{f(x,  \eta\phi^z_\varepsilon)}
{(\eta\phi^z_\varepsilon)^\alpha} -\kappa \right)
\\
&\geq 2\kappa (\eta\phi^z_\varepsilon)^\alpha,
\end{align*}
by \eqref{EL4.3A}.
Then, repeating the argument of Lemma~\ref{L4.2} (or Theorem~\ref{T2.1}) we get a contradiction. This proves the claim \eqref{EL4.3B}. Since the maximum of $\psi$ is $1$, it follows from \eqref{EL4.3B}  that
$$\liminf_{\abs{x}\to\infty} v(x)\geq 1-\varepsilon.$$
The arbitrariness of $\varepsilon$ implies the result.
\end{proof}

Let us now prove an upper bound on the asymptotic at infinity.

\begin{lemma}\label{L4.4}
Suppose that either (A4)--(A5) or (B4)--(B5) hold.
Let $u$ be a bounded, positive subsolution to $\sL u + f(x, u)= 0$  in $\Rd$. Then we have $\sup_{\Rd} u\leq M$. Furthermore, we also have $\limsup_{\abs{x}\to\infty} u(x)\leq M_1$.
\end{lemma}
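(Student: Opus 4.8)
The plan is to prove the two assertions by comparison against constant super-solutions and against the perturbed eigenfunctions built in Proposition~\ref{P4.1}, exactly dualizing the argument of Lemma~\ref{L4.3}. First I would establish $\sup_{\Rd} u\le M$. By (B2) (resp.\ (A2)), the constant $M$ is a super-solution of $\sL w + f(x,w)=0$, since $\sL M = 0$ and $f(x,M)\le 0$. Suppose for contradiction that $\sup_{\Rd} u = M + 2a$ for some $a>0$ (the sup is finite because $u$ is assumed bounded). Set $\kappa = \sup_{x}\bigl(\mathrm{Lip}(f(x,\cdot))\text{ on }[0,\sup u]\bigr)$, which is finite by (A1)/(B1). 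Then on a large ball $\sB_n$ I compare $u$ with the solution $w_n$ of the Dirichlet problem $\sL w_n + f(x,w_n)=0$ in $\sB_n$, $w_n = M$ on $\partial\sB_n$, obtained via the monotone iteration of Theorem~\ref{T2.3} starting from the super-solution $M$; since $u\le M$ fails only in the interior and $u$ is a subsolution with $u\le M = w_n$ on $\partial\sB_n$, Theorem~\ref{T2.1} (applied to the linearized operator $\sL\,\cdot\,-\,\kappa(\cdot)^{3-\gamma}$, whose zeroth-order coefficient is strictly negative) gives $u\le w_n$ in $\sB_n$; but $w_n\le M$ by the iteration, contradicting $\sup u > M$. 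Actually the cleaner route is direct: $u - M$ is a subsolution of a strictly-monotone-in-$u$ equation that is nonpositive on every sphere boundary once $n$ is large, so by the comparison principle (Theorem~\ref{T2.1}(b), after a harmless perturbation of $f$ by a decreasing function to make it strictly increasing) one gets $u\le M$ on $\sB_n$ for all $n$, hence everywhere.

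Next, for the asymptotic bound $\limsup_{|x|\to\infty} u(x)\le M_1$, I would argue by contradiction: suppose $\limsup_{|x|\to\infty} u(x) = M_1 + 3\varepsilon$ for some $\varepsilon>0$, so there is a sequence $|x_k|\to\infty$ with $u(x_k) > M_1+2\varepsilon$. Using the second half of (B4) (resp.\ (A4)), fix $\kappa>0$ with $4\kappa < -\limsup_{|x|\to\infty}\sup_{s\in(M_1+\varepsilon,2M)} f(x,s)/s^\alpha$, so that $f(x,s)/s^\alpha < -3\kappa$ for all $s\in(M_1+\varepsilon,2M)$ and all $|x|\ge r_\circ$. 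Now take $R_1,R_2,\psi$ from Proposition~\ref{P4.1} with $\delta=\kappa$; but here I need a \emph{super}-solution comparison from above, so instead of $\psi$ I use the constant $M_1+\varepsilon$ shifted up: more precisely, I consider, on balls $\sB_{R_2}(z)$ with $|z|$ large, the function $\Phi^z_\varepsilon = (M_1+\varepsilon) + (2M - M_1 - \varepsilon)\,\tilde\psi(x-z)$ where $\tilde\psi$ is a smooth bump equal to $1$ at the center and $0$ on $\partial\sB_{R_2}$, chosen (via the calculation of Proposition~\ref{P4.1}, with signs reversed) so that $\sL\Phi^z_\varepsilon < 0$ while $\Phi^z_\varepsilon \ge M_1+\varepsilon$ throughout. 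Since $f(x,\Phi^z_\varepsilon)/(\Phi^z_\varepsilon)^\alpha < -3\kappa$ there, one gets $\sL\Phi^z_\varepsilon + f(x,\Phi^z_\varepsilon) < 0$, i.e.\ $\Phi^z_\varepsilon$ is a super-solution. On $\partial\sB_{R_2}(z)$ we have $\Phi^z_\varepsilon = M_1+\varepsilon \ge u$ once $|z|$ is large (using that $\limsup_{|x|\to\infty} u \le M_1+\varepsilon$ along most of space is \emph{not} yet known — so instead I use $\Phi^z_\varepsilon = M_1+\varepsilon$ on the boundary but $u$ could exceed it there). To fix this boundary issue, the right object is the sliding argument of Lemma~\ref{L4.3} run in reverse: define $\eta = \min\{t\ge 1 : t\,\Phi^z_\varepsilon \ge u \text{ in } \sB_{R_2}(z)\}$; if the conclusion fails, $\eta > 1$, and $\eta\Phi^z_\varepsilon$ touches $u$ from above at an interior point (since $\eta\Phi^z_\varepsilon \ge \eta(M_1+\varepsilon) > M_1+\varepsilon \ge$ the boundary values of $u$ once $|z|$ is large along the sequence, using the bound $u\le M$ already proved together with a barrier reduction near $\partial\sB_{R_2}(z)$), and $\eta\Phi^z_\varepsilon$ is still a super-solution because $\eta\Phi^z_\varepsilon \in (M_1+\varepsilon, 2M)$ and the sign condition on $f/s^\alpha$ persists on that whole range. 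Then the doubling-of-variables argument of Theorem~\ref{T2.1}/Lemma~\ref{L4.2} at the touching point yields $\kappa(\eta\Phi^z_\varepsilon)^\alpha \le 0$ at an interior point, a contradiction. Letting $\varepsilon\to 0$ gives $\limsup_{|x|\to\infty} u(x)\le M_1$.

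The main obstacle is the boundary comparison in the sliding step: unlike Lemma~\ref{L4.3}, where $v>0$ in the interior and the competitor vanishes on $\partial\sB_{R_2}(z)$, here $u$ need not lie below $M_1+\varepsilon$ on $\partial\sB_{R_2}(z)$ a priori — that is precisely what we are trying to prove. I expect to resolve this by a two-stage argument: first use the already-established global bound $u\le M$ plus the second inequality in (B4) to show that the open set $\{x : u(x) > M_1+\varepsilon\}$, if unbounded, can be exhausted by balls on which $u$ is a subsolution of an equation with a \emph{strictly negative} reaction term $f(x,u)/u^\alpha$, and then run a comparison with a slightly-super-$M_1$ radial super-solution whose boundary values dominate $u$ by continuity on a well-chosen sphere where $u$ is close to $M_1+\varepsilon$ — such a sphere exists by the intermediate value property along any ray on which $u$ oscillates above and below $M_1+\varepsilon$. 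Monotonicity condition (B5) (resp.\ (A5)) is what guarantees the reaction term is genuinely sign-definite and decreasing, so that the comparison principle Theorem~\ref{T2.1} applies with a strict sign. The remaining steps — the explicit barrier computation for $\Phi^z_\varepsilon$ and the jet estimate at the touching point — are routine repetitions of Proposition~\ref{P4.1} and Theorem~\ref{T2.1} respectively.
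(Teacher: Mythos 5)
Your proposal does not establish the lemma; the boundary-comparison obstacle you correctly identify at the end is in fact fatal to both halves of the argument as you have set them up, and the paper avoids it by a different choice of test function that you do not discover.

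For the first claim ($\sup u\le M$) your argument is circular: you compare $u$ on $\sB_n$ against the solution $w_n$ (or against the constant $M$) with boundary data $M$, but the comparison principle of Theorem~\ref{T2.1} requires $u\le M$ on $\partial\sB_n$ as an \emph{input}, and that is exactly what fails when $\sup_{\Rd} u>M$ --- the set $\{u>M\}$ need not be compactly contained in any $\sB_n$, so the ``cleaner route'' in which ``$u-M$ \ldots is nonpositive on every sphere boundary once $n$ is large'' has no justification. For the second claim you acknowledge the analogous difficulty (on $\partial\sB_{R_2}(z)$ one only knows $u\le M$, not $u\le M_1+\eps$), and the two-stage repair you sketch (exhausting $\{u>M_1+\eps\}$ by balls, choosing ``well-chosen spheres'' via intermediate-value considerations along rays) is heuristic: $u$ being close to $M_1+\eps$ at isolated points on a ray gives no control over $u$ on an entire sphere around a bad point, so the sliding factor $\eta$ may well be pinned by boundary touching rather than interior touching, and the jet argument then says nothing.

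What the paper does instead is build the competitor so that interior touching is \emph{forced}: take $\theta_r(x)=|x/r|^2-1$ (so $\theta_r=0$ on $\partial\sB_r$ and $\theta_r(0)=-1$) and compare $u$ with $\beta+\theta_r$, sliding $\beta$ down over $[M+\eps,\,M_\circ+2]$ where $M_\circ=\sup u$. Since $u(0)>M_\circ-\eps$, the optimal $\beta$ satisfies $\beta>M_\circ+1-\eps>M_\circ=\sup u$, so on $\partial\sB_r$ the competitor equals $\beta>\sup u$ and the touching point is automatically interior --- no a priori boundary inequality on $u$ is needed at all. Moreover the range of $\beta+\theta_r$ stays inside $[M+\eps,\,M_\circ+2]$, where (A5) (or (B5)) combined with $f(x,M)\le 0$ gives $\sup_{x}\sup_{s\in[M+\eps,M_\circ+2]}f(x,s)<0$, and for $r$ large $\ginfdel\theta_r+\|q\|\,|\grad\theta_r|^{3-\gamma}$ is as small as one likes, so $\beta+\theta_r$ is a strict supersolution. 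Contradiction at the interior touching point via the doubling argument. The second claim is the same construction translated to the points $x_k\to\infty$ where $u(x_k)>M_1+\eps$, using $\sup u\le M$ from part one and the second inequality of (A4)/(B4) for the sign of $f$; again the bowl's boundary value can be pushed above $M\ge\sup u$, making the touching interior. Your proposal, by contrast, uses the compactly supported bump $\psi$ of Proposition~\ref{P4.1} (which vanishes on $\partial\sB_{R_2}(z)$) as the competitor's profile, and that choice simply cannot dominate $u$ on the boundary; replacing it by a paraboloid that is \emph{maximal} on the boundary rather than zero is the missing idea.
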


\begin{proof}
We only provide a proof under the hypothesis (A4)--(A5).
On the contrary, we assume that $\sup u=M_\circ>M$. We fix $\varepsilon>0$ such that $u(x_0)> M_\circ-\varepsilon>M+\varepsilon$ for some $x_0$. For 
simplicity we may assume that $x_0=0$. Note that
\begin{equation}\label{EL4.4A}
\sup_{x\in\Rd}\; \sup_{s\in[M+\varepsilon, M_\circ+2]} f(x, s)<0.
\end{equation}
Indeed, 
\begin{align*}
& \sup_{x\in\Rd}\; \sup_{s\in[M+\varepsilon, M_\circ+2]} \frac{f(x, s)}{s^{3-\gamma}}
\\
&\;\leq \sup_{x\in\Rd}\; \sup_{s\in[M+\varepsilon, M_\circ+2]} \left(\frac{f(x, s)}{s^{3-\gamma}}-\frac{f(x, M+\varepsilon)}{(M+\varepsilon)^{3-\gamma}}\right)
+ \sup_{x\in\Rd} \left(\frac{f(x, M+\varepsilon)}{(M+\varepsilon)^{3-\gamma}}-\frac{f(x, M)}{M^{3-\gamma}}\right) 
\\
&\qquad + \sup_{\Rd}\frac{f(x, M)}{M^{3-\gamma}}
\\
&\leq \sup_{x\in\Rd} \left(\frac{f(x, M+\varepsilon)}{(M+\varepsilon)^{3-\gamma}}-\frac{f(x, M)}{M^{3-\gamma}}\right)<0,
\end{align*}
by (A5). Define $\theta(x)=\abs{x}^2-1$ and $\theta_r(x)=\theta(\frac{1}{r}x)$. Then for $r>0$ large it is easily seen that 
$$\sup_{x\in\Rd}\; \sup_{s\in[M+\varepsilon, M_\circ+2]} f(x, s)< - (\ginfdel\theta_r 
+ \norm{q}_{L^\infty} \abs{\grad\theta_r}^{3-\gamma}), \quad \text{in}\; \sB_r(0).$$
Note that $\theta_r(0)=-1$. 
Let 
$$\beta=\inf\{ \kappa\in [M+\varepsilon, M_\circ+2]\; :\; \kappa+\psi_r > u \quad \text{in}\;  \sB_r(0)\}.$$
Since $u(0)>M_\circ-\varepsilon$, it follows that $\beta > M_\circ + 1-\varepsilon$ as $M_\circ+ 1-\varepsilon - \theta_r(0)=M_\circ-\varepsilon$. Again, $\beta+\theta_r$ should touch 
$u$ from above inside $\sB_r(0)$ since $(\beta+\theta_r)> M_\circ+1-\epsilon$ on $\partial \sB_r(0)$. We call $v=\beta+\theta_r$. Then
\begin{align*}
\sL v + f(x, v)&\leq \sup_{x\in\Rd}\; \sup_{s\in[M+\varepsilon, M_\circ+2]} f(x, s) + \ginfdel v + \norm{q}_{L^\infty} \abs{\grad v}^{3-\gamma}=-\delta<0.
\end{align*}
Thus $v$ is super-solution touching $u$ from above. We can now follow the arguments of Lemma~\ref{L4.2} (or Theorem~\ref{T2.1}) to obtain that 
$\delta<0$ which is a contradiction. This proves the first part.

Now we come to the second part and the proof is quite similar to above. For simplicity assume $M_1=1$. Suppose that $\tilde{M}_\circ\df\limsup_{\abs{x}\to\infty} u(x)> 1$. Then we can
find $\varepsilon\in (0 , 1)$ so that $u(x)>\tilde{M}_\circ-\varepsilon> 1+ \varepsilon$ for infinitely many $x$ tending to infinity. On the other hand, by \eqref{EA4A}, we have
$$\sup_{|x|\geq r_\circ}\; \sup_{s\in[1+\varepsilon, 2M]} f(x, s)<0,$$
for some $r_\circ>0$.
Therefore, we can apply the argument as above by suitably translating the test function $v$ and then get a contradiction. Hence we must have $\limsup_{\abs{x}\to\infty} u(x)\leq1$. This completes the proof.
\end{proof}
Finally, we establish the uniqueness.
\begin{lemma}\label{L4.5}
Suppose that either (A4)--(A5) or (B4)--(B5) hold. Then there exists a unique, bounded positive solution to 
$\sL u + f(x, u)=0$ in $\Rd$.
\end{lemma}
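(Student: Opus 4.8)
The existence of a bounded positive solution is already established in Lemmas~\ref{L4.1}--\ref{L4.2}, so the plan is to prove uniqueness. Let $u_1,u_2$ be two bounded positive solutions. By Lemmas~\ref{L4.3} and~\ref{L4.4} both satisfy $\lim_{\abs{x}\to\infty}u_i(x)=M_1$, which after rescaling we normalize to $M_1=1$. Since $\sL u_i=-f(x,u_i)$ with $q$ bounded (by (Q)) and $\abs{f(x,u_i(x))}$ bounded (by the uniform-in-$x$ local Lipschitz bound in (A1)/(B1)), Lemma~\ref{L2.1} shows that $u_1,u_2$, and hence $\kappa u_2$ for any $\kappa>0$, are locally Lipschitz in $\Rd$. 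As $\inf_{\Rd}u_2>0$ (Lemma~\ref{L4.2}) while $\sup_{\Rd}u_1\le M$ (Lemma~\ref{L4.4}), the number
$$\kappa^*=\inf\{\kappa\ge 1\,:\,\kappa u_2\ge u_1\ \text{in}\ \Rd\}$$
is finite and $\kappa^* u_2\ge u_1$. If $\kappa^*=1$ then $u_1\le u_2$, and interchanging $u_1$ and $u_2$ forces $u_1=u_2$; so the whole point is to exclude $\kappa^*>1$.

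Assume $\kappa^*>1$ and set $w=\kappa^* u_2$. Since $u_1-w\to 1-\kappa^*<0$ as $\abs{x}\to\infty$, the set $Z=\{x:w(x)=u_1(x)\}$ is compact; it is nonempty, for otherwise $\sup_{\Rd}(u_1-w)<0$ and $(\kappa^*-\sigma)u_2\ge u_1$ for small $\sigma>0$, against the minimality of $\kappa^*$. By the $(3-\gamma)$-homogeneity of $\sL$ and of the Hamiltonian $q(x)\cdot p\,\abs{p}^{2-\gamma}$, the function $w$ is a viscosity supersolution of $\sL u+(\kappa^*)^{3-\gamma}f(x,u/\kappa^*)=0$; and since $0<w/\kappa^*<w$, the strict monotonicity (A5)/(B5) gives $(\kappa^*)^{3-\gamma}f(x,w/\kappa^*)>f(x,w)$, so that
$$\sL w+f(x,w)<0\quad\text{in}\ \Rd.$$
Fix $R$ large enough that $Z\subset\sB_R$; then $u_1-w<0$ on $\partial\sB_R$ and $\max_{\overline{\sB_R}}(u_1-w)=0$, attained only in $Z$. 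On the compact set $\overline{\sB_R}$ the values of $u_2$ lie in a compact subinterval of $(0,\infty)$, and the uniform-in-$x$ local Lipschitz property of $f$ in (A1)/(B1) makes $s\mapsto\inf_{x}\bigl(s^{-(3-\gamma)}f(x,s)-(\kappa^* s)^{-(3-\gamma)}f(x,\kappa^* s)\bigr)$ a positive continuous function there; hence $w$ is a supersolution of $\sL u+f(x,u)=-\eta_0$ in $\sB_R$ for some $\eta_0>0$.

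I would then run the doubling-of-variables argument of Theorem~\ref{T2.1} on $\sB_R$, with $u_1$ as a subsolution of $\sL u+f(x,u)=0$ and $w$ as a supersolution of $\sL u+f(x,u)=-\eta_0$, via $w_\varepsilon(x,y)=u_1(x)-w(y)-\frac{1}{4\varepsilon}\abs{x-y}^4$. Exactly as there, using $u_1-w<0$ on $\partial\sB_R$, the maximizers $(x_\varepsilon,y_\varepsilon)$ stay in the interior and converge along a subsequence to a common point $\bar x\in Z$ (so $x_\varepsilon,y_\varepsilon\in\sB_R$ for $\varepsilon$ small), while \eqref{ET2.1B} keeps $\eta_\varepsilon=\frac{1}{\varepsilon}\abs{x_\varepsilon-y_\varepsilon}^2(x_\varepsilon-y_\varepsilon)$ bounded. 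Subtracting the two viscosity inequalities and using $X\le Y$, the modulus-of-continuity bound on the Hamiltonian, and the continuity of $f$ together with $u_1(\bar x)=w(\bar x)$, the limit $\varepsilon\to 0$ yields $0\le-\eta_0$, a contradiction; the degenerate case $\gamma=2$ with $\eta_\varepsilon=0$ is handled through $M(X)\le 0\le m(Y)$ exactly as in Theorem~\ref{T2.1}. Hence $\kappa^*=1$, so $u_1\le u_2$, and interchanging the two solutions gives $u_1=u_2$.

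The one place where more than a verbatim copy of Theorem~\ref{T2.1} is required is the production of the \emph{uniform} defect $\eta_0>0$ on a neighbourhood of the touching set: it is precisely this defect that survives the comparison limit once the touching occurs at an interior point where $u_1$ and $w$ agree. Its existence rests on $Z$ being compact — which is exactly what the common limit $M_1$ at infinity (Lemmas~\ref{L4.3}--\ref{L4.4}) guarantees — together with the uniform-in-$x$ local Lipschitz regularity of $f$ from (A1)/(B1), which upgrades the pointwise strict inequality in (A5)/(B5) to a uniform one on the relevant compact range of values. Everything else is routine.
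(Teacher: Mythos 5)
Your proposal is correct and follows essentially the same strategy as the paper: use Lemmas~\ref{L4.3}--\ref{L4.4} to pin down the common limit $M_1$ at infinity (which compactifies the contact set), define a scaling constant whose minimality forces an interior touching, and then run the doubling-of-variables argument from Theorem~\ref{T2.1} so that the strict monotonicity (A5)/(B5) at the touching point produces the contradiction. The only cosmetic differences are that you scale $u_2$ \emph{up} via $\kappa^*\geq 1$ (the paper scales $w_1$ \emph{down} via $\kappa^*<1$, which is the reciprocal parametrization) and that you extract a uniform defect $\eta_0$ on a compact ball before comparing, whereas the paper passes to the limit first and invokes the pointwise strictness of (A5) at the limit point $z$; both routes close the argument in the same way.
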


\begin{proof}
In view of Lemma~\ref{L2.1} we note that any bounded solution has to be globally Lipschitz.
Let $w_1, w_2$ be two solutions to $\sL u + f(x, u)=0$ in $\Rd$.
In view of Lemma~\ref{L4.3} and Lemma~\ref{L4.4} we see that $\lim_{\abs{x}\to \infty} w_1(x)=\lim_{\abs{x}\to \infty} w_2(x)=M_1$. 
Suppose that there exists $x_0\in\Rd$ satisfying $w_1(x_0)>w_2(x_0)$. Define
$$\kappa^* =\max\{t>0\; :\; t w_1< w_2\quad \text{in}\; \Rd\}.$$
Since $\inf w_2>0$, it follows that $\kappa^*>0$. Also, $\kappa^*<1$. Thus, $\liminf_{\abs{x}\to\infty} (w_2(x)-\kappa^* w_1(x))>0$. It then implies that $w_2-\kappa^* w_1$ must vanish somewhere in 
$\Rd$ i.e. $\min_{\Rd}(w_2(x)-\kappa^* w_1(x))=0$. 

As before, we consider the coupling function
$$w_\varepsilon(x, y)= \kappa^* w_1(x)-w_2(y) -\frac{1}{2\varepsilon}\abs{x-y}^4, \quad x, y\in\Rd.$$
Note that there will be a pair of point $(x_\varepsilon, y_\varepsilon)$ attending maximum of $w_\varepsilon$. Pick a $\delta\in (0, 1-\kappa^*)$ small and a number $K$ large enough 
so that
$$\kappa^* w_1(x)\leq \kappa^*+\delta, \quad w_2(x)\geq \kappa^*+2\delta\quad \text{for all}\; |x|\geq K.$$
Thus, for $\abs{x-y}\leq 1$ and $|y|\geq K+1$ we have $w_\varepsilon(x, y)<-\delta$.  Again, for $\abs{x-y}\geq 1$, $w_\varepsilon(x, y)<0$ for all $\varepsilon$ small.
Since $w_\varepsilon(x_\varepsilon, y_\varepsilon)\geq 0$, it follows that 
 $\abs{x_\varepsilon} + \abs{y_\varepsilon}\leq K+1$ for all $\varepsilon$. As in the proof of Theorem~\ref{T2.1}, we will also have
$$\lim_{\varepsilon\to 0}\frac{1}{2\varepsilon}\abs{x_\varepsilon-y_\varepsilon}^4=0,\quad |x_\varepsilon-y_\varepsilon|^3=\order(\varepsilon), \quad \text{and}\quad x_\varepsilon, y_\varepsilon\to z.$$
Also, $w_2(z)=\kappa^* w_1(z)>0$. Also,  we have
$$\sL(\kappa^* w_1) + (\kappa^*)^{3-\gamma} f(x, w_1)
\geq (\kappa^*)^{3-\gamma} \left(\sL w_1(x) +  f(x, w_1(x))\right)\geq 0.$$
Thus, arguing as in Theorem~\ref{T2.1}, we obtain
$$-(\kappa^*)^{3-\gamma} f(x_\varepsilon, w_1(x_\varepsilon))\leq - f(y_\varepsilon, w_2(y_\varepsilon))+
\omega(|x_\varepsilon-y_\varepsilon|) (\varepsilon^{-1}\abs{x_\varepsilon-y_\varepsilon}^3)^{3-\gamma}.$$
Letting $\varepsilon\to 0$, and arguing similar to Theorem~\ref{T2.1}, we find
\begin{align*}
0 &\geq f(z, \kappa^* w_1(z))-(\kappa^*)^{3-\gamma} f(z, w_1(z))
\\
&\geq (\kappa^* w_1(z))^{3-\gamma} \inf_{y\in\Rd} \left(\frac{f(y, \kappa^* w_1(z))}{(\kappa^* w_1(z))^{3-\gamma}}- \frac{f(y, w_1(z))}{(w_1(z))^{3-\gamma}}\right)>0,
\end{align*}
by (A5). This is a contradiction and therefore, $w_1\leq w_2$. Similarly, we have $w_2\leq w_1$. Hence, we complete the proof.
\end{proof}

\begin{proof}[Proof of Theorem~\ref{T4.1}]
The existence follows from Lemma~\ref{L4.1} and Lemma~\ref{L4.2} whereas the uniqueness follows from Lemma~\ref{L4.5}.
\end{proof}

Finally, we prove Theorem~\ref{T4.2}.
\begin{proof}[Proof of Theorem~\ref{T4.2}]
Since we have $s\mapsto \frac{f(x, s)}{s^{3-\gamma}}$ strictly decreasing, it is
easily seen that $f(x, s)\leq \ell(x) s^{3-\gamma}$ for all $s\geq 0$.
 Thus it follows from \eqref{ET4.2A} that
\begin{equation}\label{ET4.2C}
 \sL\Lyap + f(x, \Lyap(x))\;\leq\; 0\quad \text{in}\; \Rd\,.
\end{equation}
Suppose that there exists $u\gneq 0$, bounded, satisfying
\begin{equation}\label{ET4.2D}
\sL u + f(s, u)=0 \quad \text{in}\; \Rd.
\end{equation}
Then, first part of the proof of Lemma~\ref{L4.2} implies that $u>0$ in $\Rd$.
For any $\kappa<1$, define $u_\kappa=\kappa u$. Using monotonicity
 and \eqref{ET4.2D} it then follows that
\begin{equation}\label{ET4.2E}
\sL u_\kappa + f(s, u_\kappa)\,\geq\,0 \quad \text{in}\; \Rd.
\end{equation}
Choose $\epsilon_0\in (0, 1)$ so that $f(x, s)\leq \ell(x)s^{3-\gamma}$ for
all $s\in[0, \epsilon_0)$. Now we claim that for any $\kappa<\frac{\epsilon_0}{\norm{u}_\infty+1}$,
$\norm{u}_\infty\df\norm{u}_{L^\infty(\Rd)}$, we have
\begin{equation}\label{ET4.2F}
u_\kappa(x)\,\leq\, \Lyap(x) \quad \text{for all}\; x\in\Rd\,.
\end{equation}
To prove the claim first we observe from the proof of Lemma~\ref{L4.4} and \eqref{ET4.2B} that 
$\lim_{\abs{x}\to\infty} u_\kappa(x)=0$. Let
$$\beta=\sup\{t\geq 0\; :\;  t\,u_\kappa< \Lyap\}.$$
Since $\Lyap>0$, it is obvious that $\beta>0$. To prove \eqref{ET4.2F} we need show that $\beta\geq 1$.
We assume by contradiction that $\beta<1$. Since $\liminf_{\abs{x}\to\infty}(\Lyap(x)-\beta u_\kappa(x))>0$, $\beta u_\kappa$ must touch $\Lyap$ from below in $\Rd$. Consider the
coupling function 
$$w_\varepsilon(x, y)= \beta u_\kappa(x)-\Lyap(y) -\frac{1}{2\varepsilon}\abs{x-y}^4, \quad x, y\in\Rd
$$
as in Lemma~\ref{L4.5}, and then following the arguments of Lemma~\ref{L4.5} we find
$$\lim_{\varepsilon\to 0}\frac{1}{2\varepsilon}\abs{x_\varepsilon-y_\varepsilon}^4=0,\quad |x_\varepsilon-y_\varepsilon|^3=\order(\varepsilon), \quad \text{and}\quad x_\varepsilon, y_\varepsilon\to z,$$
and $\Lyap(z)=\beta u_\kappa(z)\in (0, \epsilon_0)$. Also, 
$\ell(z) \Lyap^{3-\gamma}(z)\geq f(z, \Lyap(z))$. Then, repeating the arguments of
Lemma~\ref{L4.5} and using \eqref{ET4.2E} we arrive at a contradiction. This proves $\beta\geq 1$, giving us \eqref{ET4.2F}.

Now observe that \eqref{ET4.2A} (and therefore, \eqref{ET4.2C}) holds if we replace $\Lyap$ by $\mu\Lyap$ for any $\mu>0$. Thus, 
we obtain from \eqref{ET4.2F} that $\kappa u \leq \mu \Lyap $ for any $\mu>0$ and 
$\kappa<\frac{\epsilon_0}{\norm{u}_\infty+1}$. But this is not possible since $u>0$ in $\Rd$. This gives us
a contradiction. Hence $u\equiv 0$.
\end{proof}

\subsection*{Acknowledgement}
The research of Anup Biswas was supported in part by DST-SERB grants EMR/2016/004810 and MTR/2018/000028.


\begin{thebibliography}{99}


\bibitem{ALT}
D. J. Ara\'{u}jo, R. Leit\~{a}o and E. V. Teixeira, 
\emph{Infinity Laplacian equation with strong absorptions}, J. Func. Anal. {\bf 270} (2016), 2249--2267.


\bibitem{ASS11} S. N.  Armstrong, C. K. Smart and S. J. Somersille,
\emph{An infinity Laplace equation with gradient term and mixed boundary conditions}, Proc. Amer. Math. Soc. {\bf 139} (2011), no. 5, 1763--1776.

\bibitem{AS10} S. N. Armstrong and C. K. Smart, 
\emph{An easy proof of Jensen's theorem on the uniqueness of infinity harmonic functions},
Calc. Var. Partial Differ. Equ. {\bf 37}(3-4) (2010), 381--384.

\bibitem{AG1} G. Aronsson,
\emph{Minimization problems for the functional $\sup_x F (x, f (x), f'(x))$}, Ark. Mat. {\bf 6} (1965) 33--53.

\bibitem{AG2} G. Aronsson,
\emph{Minimization problems for the functional $\sup_x F (x, f (x), f'(x))$. II}, Ark. Mat. {\bf 6} (1966) 409--431.

\bibitem{AG3} G. Aronsson,
\emph{Extension of functions satisfying Lipschitz conditions}, Ark. Mat. 
{\bf 6} (1967) 551--561.

\bibitem{ACJ} G. Aronsson, M.G. Crandall and P. Juutinen,
\emph{A tour of the theory of absolutely minimizing functions}, Bull. Amer. Math. Soc. (N.S.) {\bf 41} (2004), no. 4, 439--505.


\bibitem{BNV} H. Berestycki, L. Nirenberg and S.R.S. Varadhan,
\emph{The principal eigenvalue and maximum principle for second-order
elliptic operators in general domains}, Comm. Pure Appl. Math. {\bf 47} (1994), no. 1, 47--92.

\bibitem{BHN}
H. Berestycki, F. Hamel and N. Nadirashvili,
\emph{The speed of propagation for KPP type problems. II-General domains}, J. Amer. Math. Soc. {\bf 23} (2010), 1--34.

\bibitem{BH}
H. Berestycki and F. Hamel, 
\emph{Generalized transition waves and their properties}, Comm. Pure Appl. Math. {\bf 65} (2012), no. 5, 592--648.

%

\bibitem{BHR} H. Berestycki, F. Hamel and L. Rossi,
\emph{Liouville-type results for semilinear elliptic equations in unbounded domains}, Ann. Mat. Pura Appl. (4) {\bf 186} (2007), no. 3, 469--507.

\bibitem{BDM}  T. Bhattacharya, E. DiBenedetto and J. Manfredi,
\emph{Limit as $p\to\infty$ of $\Delta_p u_p = f$ and related extremal problems}, Rend. Sem. Mat. Univ. Politec. Torino (1989), 15--68.

\bibitem{BM13}  T. Bhattacharya and L. Marazzi,
\emph{An eigenvalue problem for the infinity-Laplacian. Electron}, J. Differential Equations (2013), No. 47, 30 pp.

\bibitem{BM12}  T. Bhattacharya and A. Mohammed,
\emph{Inhomogeneous Dirichlet problems involving the infinity-Laplacian}, Adv. Differential Equations {\bf 17} (2012), no. 3-4, 225--266.

\bibitem{TB01} T. Bhattacharya,
\emph{An elementary proof of the Harnack inequality for non-negative infinity-superharmonic functions}, Electron. J. Differential Equations (2001), no. 44, 8 pp.

\bibitem{BD01}  I. Birindelli and F. Demengel,
\emph{Some Liouville theorems for the p-Laplacian}, Proceedings of the 2001 Luminy Conference on Quasilinear Elliptic and Parabolic Equations and System, 35--46, Electron. J. Differ. Equ. Conf., 8, Southwest Texas State Univ., San Marcos, TX, 2002

\bibitem{BGI}
I. Birindelli, G. Galise and H. Ishii,
\emph{Existence through convexity for the truncated Laplacians}, Math. Ann. (2020). https://doi.org/10.1007/s00208-019-01953-x.

\bibitem{BK}  G. Buttazzo and B. Kawohl,
\emph{Overdetermined Boundary Value Problems for the $\infty$-Laplacian},
Int. Math. Res. Not. IMRN 2011, no. 2, 237--247.

\bibitem{CMS} V. Caselles, J-M. Morel and C. Sbert, 
\emph{An axiomatic approach to image interpolation}, IEEE Transactions on Image Processing {\bf 7} (1998), 376--386.

\bibitem{CP09} F. Charro and I. Peral,
\emph{Zero order perturbations to fully nonlinear
equations: Comparison, existence and uniqueness}, Comm. Contemp. Math. {\bf 11} (2009), 131--164.

\bibitem{CF} G. Crasta and I. Fragal\`{a},
\emph{A Symmetry Problem for the Infinity Laplacian},
Int. Math. Res. Not. IMRN 2015, no. 18, 8411--8436.

\bibitem{CF1} G. Crasta and I. Fragal\`{a},
\emph{On the Dirichlet and Serrin problems for the inhomogeneous infinity Laplacian in convex domains: regularity and geometric results},
Arch. Ration. Mech. Anal. 218 (2015), no. 3, 1577--1607.

\bibitem{C08} M. G. Crandall, 
\emph{A visit with the $\infty$-Laplace equation. Calculus of variations and nonlinear partial differential equations}, 
pp. 75--122, Lecture Notes in Math., vol. 1927, Springer, Berlin (2008).

\bibitem{CEG}
M.G. Crandall, L.C. Evans and R.F. Gariepy, 
\emph{Optimal Lipschitz extensions and the infinity-Laplacian}, Calc. Var. Partial Differential Equations {\bf 13} (2) (2001), 123--139. 

\bibitem{CIL} M.G. Crandall, H. Ishii and P.L. Lions,
\emph{User's guide to viscosity solutions of second-order partial differential equations},
Bull. Amer. Math. Soc. {\bf 27} (1992), 1--67.


\bibitem{DDF}
J. D\'avila, L. Dupaigne and A. Farina,
\emph{Partial regularity of finite Morse index solutions to the Lane–Emden equation}, J. Funct. Anal. \emph{261} (1) (2011), 218–232.

\bibitem{DDWW}
J. D\'avila, L. Dupaigne, K. L. Wang and J. C. Wei,
\emph{A monotonicity formula and a Liouville-type theorem for a fourth order supercritical problem}, Adv. Math. {\bf 258} (2014), 240--285.

\bibitem{DGIMR} L. D'Onofrio, F. Giannetti, T. Iwaniec, J. Manfredi
and T. Radice, 
\emph{Divergence forms of the $\infty$-Laplacian}, Publ. Mat. {\bf 50}(1) (2006), 229--248.

\bibitem{ES}
L.C. Evans and O. Savin, 
\emph{$C^{1,\alpha}$ regularity for infinity harmonic functions in two dimensions}, Calc. Var. Partial Differ. Equ. {\bf 32} (2008), 325--347.

\bibitem{ES1}
L. C. Evans and C. K. Smart,
\emph{Everywhere differentiability of infinity harmonic functions}, Calc. Var. Partial Differential Equations {\bf 42} (2011), 289--299.


\bibitem{RJ93} R. Jensen, 
\emph{Uniqueness of Lipschitz extensions: Minimizing the sup norm of the gradient}, Arch. Ration. Mech.
Anal. {\bf 123} (1) (1993), 51--74.

\bibitem{JPR16} P. Juutinen, M. Parviainen and J. D. Rossi,
\emph{Discontinuous gradient constraints and the infinity Laplacian}, Int. Math. Res. Not. IMRN 2016, no. 8, 2451--2492.

\bibitem{PJ07} P. Juutinen,
\emph{Principal eigenvalue of a very badly degenerate operator and applications}, J. Differential Equations {\bf 236} (2007), no. 2, 532–550.

\bibitem{JL05} P. Juutinen and P. Lindqvist,
\emph{On the higher eigenvalues for the $\infty$-eigenvalue problem}, Calc. Var. Partial Differential
Equations {\bf 23} (2) (2005), 169--192.

\bibitem{JLM} P. Juutinen, P. Lindqvist and J.J. Manfredi,
\emph{The $\infty$-eigenvalue problem},
Arch. Ration. Mech. Anal. {\bf 148} (2) (1999),
89--105.

\bibitem{KZZ19a} H. Koch, Y. R-Y Zhang and Y. Zhou,
\emph{An asymptotic sharp Sobolev regularity for planar infinity harmonic functions},  J. Math. Pures Appl. {\bf 132} (2019), 457-482.

\bibitem{KZZ19b} H. Koch, Y. R-Y Zhang, Y. Zhou,
\emph{Some sharp Sobolev regularity for inhomogeneous infinity Laplace equation in plane},
J. Math. Pures Appl. {\bf 132} (2019), 483-521.

\bibitem{LNW}
Y.Y. Li, L. Nguyen and B. Wang,
\emph{Comparison principles and Lipschitz regularity for some nonlinear degenerate elliptic equations}, Calc. Var. Partial Differential Equations
 {\bf 57} (2018), no. 4, Art. 96, 29 pp.

\bibitem{Lindgren}
E. Lindgren,
\emph{On the regularity of solutions of the inhomogeneous infinity Laplace equation}, Proc.
Amer. Math. Soc. {\bf 142} (1) (2014), 277--288.

\bibitem{Lindqvist}  P. Lindqvist, ``Notes on the infinity Laplace equation". Springer Briefs in Mathematics. BCAM Basque Center for Applied Mathematics, Bilbao; Springer, [Cham], 2016. ix+68 pp. ISBN:

\bibitem{LNR13} R. L\'{o}pez-Soriano, J.C. Navarro-Climent and J.D. Rossi,
\emph{The infinity Laplacian with a transport term}, J. Math. Anal. Appl. {\bf 398} (2013), no. 2, 752--765.

\bibitem{LW2008} G. Lu and P. Wang, \emph{Inhomogeneous infinity Laplace equation}, Advances in Mathematics {\bf 217}(4) (2008), 1838--1868.

\bibitem{LW2008a} G. Lu and P. Wang, \emph{A PDE perspective of the normalized infinity Laplacian}, Comm.
PDE. {\bf 10} (2008), 1788--1817.

\bibitem{LW2010}  G. Lu and P. Wang, \emph{Infinity Laplace equation with non-trivial right-hand side}, Electron. J. Differential Equations {\bf 77} (2010), 1--12.

\bibitem{M1}
M. Meier, \emph{Liouville theorems, partial regularity and Hölder continuity of weak solutions to quasilinear elliptic systems}, Trans. Amer. Math. Soc. {\bf 284} (1984),  no. 1, 371--387. 

\bibitem{MT}
H. Mitake and H.V. Tran, \emph{Weakly coupled systems of the infinity Laplace equations}, Trans. Amer. Math. Soc. {\bf 369} (3) (2017), 1773--1795.

\bibitem{NV1}
PT. Nguyen and H-H Vo, 
\emph{Existence, uniqueness and qualitative properties of positive solutions of quasilinear elliptic equations}, 
J. Funct. Anal. {\bf 269} (2015), no. 10, 3120--3146.

\bibitem{SP} S. Patrizi, 
\emph{The principal eigenvalue of the $\infty$-Laplacian with the Neumann boundary condition}. ESAIM Control Optim. Calc. Var. {\bf 17} (2011), no. 2, 575--601.

\bibitem{PPSW} Y. Peres, O. Schramm, S. Sheffield and D. Wilson,
\emph{Tug-of-war and the infinity Laplacian},  J. Amer. Math. Soc. {\bf 22}, (2009), 167--210.

\bibitem{PV12} M. Portilheiro and J. L. V\'{a}zquez, \emph{A porous medium equation involving the infinity-Laplacian. Viscosity solutions and asymptotic behavior}, Comm. Partial Differential Equations {\bf 37} (2012), no. 5, 753--793.

\bibitem{PV13} M. Portilheiro and J. L. V\'{a}zquez, \emph{Degenerate homogeneous parabolic equations associated with the infinity-Laplacian}, Calc. Var. Partial Differential Equations {\bf 46} (2013), no. 3-4, 705--724.

\bibitem{S1}
O. Savin, \emph{$C^1$ regularity for infinity harmonic functions in two dimensions}, Arch. Ration. Mech. Anal. {\bf 176} (3) (2005), 351--361.






\end{thebibliography}
\end{document}